
\documentclass[a4paper,11pt,oneside,reqno]{amsart}
\usepackage{graphicx,fancyhdr,array,multicol,multirow,enumitem}
\usepackage{tikz}
\usepackage[hmargin={2.5cm,2.5cm},vmargin={2cm,2cm},includefoot, marginparwidth=60pt]{geometry}
\usepackage{fancybox}
\usepackage{xcolor}
\usepackage{cancel}
\usepackage[normalem]{ulem}
\usepackage{lmodern}
\usepackage{soul} 
\usepackage[utf8]{inputenc}
\usepackage[T1]{fontenc}
\usepackage[english]{babel}
\usepackage{amssymb,mathtools,amsfonts,amsthm}
\usepackage{stmaryrd}
\usepackage{mathrsfs}
\usepackage{setspace}
\usepackage{hyperref}
\usepackage{cite} 
\usepackage{dsfont}
\bibliographystyle {alpha}
\newtheorem{theorem}{Theorem}[section]
\newtheorem{definition}[theorem]{Definition}
\newtheorem{lemma}[theorem]{Lemma}

\newtheorem{proposition}[theorem]{Proposition}

\theoremstyle{definition}
\newtheorem{remark}[theorem]{Remark}
\newcommand{\mc}[1]{{\mathcal #1}}

\newcommand{\eps}{\varepsilon}

\newcommand{\E}{\mathbb E}
\newcommand{\N}{\mathbb N}
\renewcommand{\P}{\mathbb P}
\newcommand{\R}{\mathbb R}
\newcommand{\bQ}{\mathbb{Q}}

\newcommand{\Z}{\mathbb Z}
\newcommand{\I}{\ensuremath{\mathbf{i}}}
\newcommand{\rZ}{\mathcal{Z}^\eps}

\newcommand{\scZ}{\mathscr{Z}^\eps}
\newcommand{\p}{\mathsf{p}}

\newcommand{\mm}[1]{#1}
\newcommand{\oo}[1]{#1}
\renewcommand{\leq}{\leqslant}
\renewcommand{\geq}{\geqslant}
\renewcommand{\bar}{\overline}

\newcommand{\cL}{\mathcal{L}}

\newcommand{\rightarrowplus}{\overset{+}{\longrightarrow}}
\newcommand{\leftarrowplus}{\overset{+}{\longleftarrow}}

\newcommand{\leftarrowminus}{\overset{-}{\longleftarrow}}

\newcommand{\Res}[1]{\underset{{#1}}{\mathbf{Res}}}






\begin{document}
	
	\title{Weakly asymmetric facilitated exclusion process}
	
	\author[G. Barraquand]{Guillaume Barraquand}
	\address{G. Barraquand,
		Laboratoire de physique de l’\'ecole normale supérieure, ENS, Universit\'e PSL, CNRS, Sorbonne Universit\'e, Universit\'e Paris-Cité, Paris, France}
	\email{guillaume.barraquand@ens.fr}
	
	\author[O. Blondel]{Oriane Blondel}
	\address{O. Blondel, Universite Claude Bernard Lyon 1, CNRS, Centrale Lyon, INSA Lyon, Université Jean Monnet, ICJ UMR5208,
		69622 Villeurbanne, France.}
	\email{blondel@math.univ-lyon1.fr} 
	
	\author[M. Simon]{Marielle Simon}
	\address{M. Simon, Universite Claude Bernard Lyon 1, CNRS, Centrale Lyon, INSA Lyon, Université Jean Monnet, ICJ UMR5208,
		69622 Villeurbanne, France.}
	\email{msimon@math.univ-lyon1.fr} 
	
	\begin{abstract}
		We consider the facilitated exclusion process, an interacting particle system on the integer line where particles hop to one of their left or right neighbouring site only when the other neighbouring site is occupied by a particle. A peculiarity of this system is that, starting from the step initial condition, the density profile develops a downward jump discontinuity around the position of the first particle, unlike other exclusion processes such as the asymmetric simple exclusion process (ASEP). In the weakly asymmetric regime, we show that the field of particle positions around the jump discontinuity converges to the solution of the multiplicative noise stochastic heat equation (\textit{i.e.}~the exponential of a solution to the KPZ equation) on a half-line subject to  Dirichlet boundary condition, with initial condition given by the derivative of a Dirac delta function. We prove this result by reformulating the problem in terms of ASEP on a half-line with a boundary reservoir, for which we extend known proofs of convergence to deal with Dirichlet boundary condition and the very singular type of initial condition that arises in our case.
	\end{abstract}

	\maketitle

\section{Introduction}
\subsection{Facilitated exclusion process}

The facilitated exclusion process (FEP) was introduced in the physics literature \cite{pastor2000FEP} as a representative of a universality class for absorbing phase transitions. It is an interacting particle system on a lattice in which particles can jump to empty neighbors provided there is a particle in their neighborhood. So far our mathematical understanding of this model is restricted to dimension $1$, where it has been studied under different lights. Its main feature is the absorbing transition mentioned above, at the critical particle density $1/2$: for particle densities below $1/2$ (subcritical regime), the system fixates on a configuration with isolated particles which cannot move, while for densities above $1/2$ (supercritical regime) it remains active forever and holes become eventually isolated. 

The totally asymmetric version of this process (where particles only jump to the right) has been studied in \cite{basu2009FTASEP} (approach to the phase transition) and \cite{chen2018limiting,goldstein2019FTASEP,goldstein2021FTASEP} (identification of the stationary states).  
Starting from a step initial condition, contrary to the well-studied totally asymmetric simple exclusion process (TASEP), a downstep leads to a rarefaction fan with a discontinuity \cite{gabel2010FEP}. For the same initial condition, at large time $t$,  particle positions fluctuate on the $t^{1/3}$ scale with Tracy-Widom GUE statistics, while the fluctuations of the rightmost particle, \textit{i.e.}~at the discontinuity, have Tracy-Widom GSE statistics \cite{baik2018FTASEP}.
The symmetric FEP has been studied as well, on the periodic lattice. It was found \cite{blondel2020FEP,blondel2021FEP} that in the diffusive space-time scaling, and under the hydrodynamic limit,  the macroscopic density $\rho$ evolves according to  a Stefan problem written as $\partial_t\rho=\partial_{xx}\big(\frac{2\rho-1}{\rho}\mathbf{1}_{\rho>1/2}\big)$, with  the space variable $x$ belonging to the one-dimensional torus of size $1$. In other words, starting the microscopic dynamics from a density profile with both supercritical and subcritical regions,  the diffusive supercritical phase progressively invades the subcritical phase via moving interfaces, until one or the other phase disappears. For the partially  asymmetric version, where particles jump to the right at rate $p<1$ and to the left at rate $q<p$, invariant measures have been characterized   on the torus \cite{gabel2010FEP} and  on the line \cite{ayyer2020stationary}. Recently, \cite{ESZ} showed that the hydrodynamic limit (in the hyperbolic scaling) is given by the unique entropy solution of $\partial_t\rho+(2p-1)\partial_x\big(\frac{(1-\rho)(2\rho-1)}{\rho}\mathbf{1}_{\rho>1/2}\big)$, with $(t,x)\in\R_+\times \R$. Finally, in \cite{EZ23}, the authors start the facilitated exclusion process from a stationary measure, and prove central limit results for the density fluctuation field in the symmetric, weakly asymmetric, and totally asymmetric cases.

\medskip

One natural question concerns the fluctuations of the FEP in the asymmetric case (FASEP), \mm{starting from the step initial condition}. As it has been noted for the totally asymmetric case in \cite{baik2018FTASEP}, the problem can be reformulated in terms of the asymmetric simple exclusion process (ASEP) on the half-line with jump rates $p>q$, and with a specific boundary condition, where particles enter the system at a rate $p$ and can never exit (see Figure~\ref{fig:FASEP-OpenASEP}). For fixed $p$ and $q$,  we expect that, up to scaling constants, the fluctuations should be similar as for the totally asymmetric FEP\footnote{Proving this requires an analogous statement for current fluctuations of ASEP on a half-line, which was not available when the first version of this article was posted. A limit theorem for the current at the boundary  in half-line ASEP  is now proved in \cite{he2024boundary}, generalizing \cite{barraquand2018stochastic, tracy2013bose, tracy2013asymmetric}.  This translates into fluctuations of the first particle's position in the FEP. The analysis of fluctuations far away from the first particle remains an open problem.}.

In the present paper, we study the fluctuations of particle positions in the FASEP in a weakly asymmetric asymptotic regime. In the bulk (that is, far from the jump discontinuity of the density profile), we do not expect that the facilitation rule will have any effect on the scaling limit of fluctuations.  In particular, we expect that the field of particle positions, appropriately rescaled, should converge to a solution of the Kardar-Parisi-Zhang (KPZ) equation on the real line.
However, for the step initial condition, if we consider the field of particle positions in the FASEP around the first particle, the situation is more interesting and the facilitation rule plays a role. The limit should be described by a stochastic PDE on a semi-infinite interval with a specific boundary condition. The main goal of the present paper is to describe this stochastic PDE \oo{and prove the convergence}.

\subsection{KPZ equation and Hopf-Cole transform} \label{sec:kpz}
In order to state our main result, let us first recall how to solve the KPZ equation on the full one-dimensional line, which reads as 
\begin{equation}\label{e:KPZ}
	\partial_t h=\frac{1}{2}\partial_{xx} h+\frac{1}{2}(\partial_x h)^2+\xi, \qquad t\ge 0, x \in \R,
\end{equation}
with $\xi$ the standard space-time white noise on $\R_+\times\R$. One usually considers the Hopf-Cole transform of a putative solution $h$, namely $\mathcal Z(t,x)=e^{h(t,x)}$. If we apply the chain rule in \eqref{e:KPZ}, ignoring all issues of regularity, the function  $\mathcal Z$ solves the Stochastic Heat Equation (SHE) with multiplicative noise 
\begin{equation}\partial_t\mc Z=\frac{1}{2}\partial_{xx}\mc Z+\mc Z\xi.
	\label{eq:SHE}
\end{equation}
The latter equation can be solved through standard SPDE techniques, and whenever it can be shown that $\mc Z>0$ \cite{mueller1991support}, this procedure yields a Hopf-Cole solution to \eqref{e:KPZ}. In their seminal paper \cite{bertini1997stochastic}, Bertini and Giacomin noticed that the discrete Hopf-Cole transform (introduced by G\"artner \cite{gartner1987transform}), defined on the microscopic space variable $k\in \Z$ by
\begin{equation}
	Z_t(k):=e^{-\lambda h_t(k)+\nu t}, \qquad t\ge 0,
	\label{eq:discreteHopfCole}
\end{equation}
of the ASEP height function $h_t(k)$ ($k\in\Z$), satisfies, for well-chosen parameters $\lambda, \nu$, a martingale problem that is a discrete analogue of the martingale problem satisfied by $\mc Z$. In the weakly asymmetric regime, say $p=\frac12e^\varepsilon, q=\frac12e^{-\varepsilon}$ with $0<\varepsilon\ll 1$, as assumed in this paper, it can then be showed that solutions of the discrete martingale problem converge to solutions of the continuous one. Given uniqueness of the solution to the continuous martingale problem, this is enough to conclude that ASEP height function, suitably rescaled, converges to a solution of the KPZ equation. Further, \cite{dembo2016weakly} identifies a whole class of models to which this method may apply, in the sense that a generalization of the discrete Hopf-Cole transform can be found and convergence to the SHE can be proved. 

Other approaches to solving \eqref{e:KPZ} that do not require a detour through the SHE were also looked for in the last decades, and can be useful in cases where there is no applicable discrete Hopf-Cole transform (unlike the present paper). Let us mention regularity structures  \cite{hairer2013kpz,hairer2014regularity}; energy solutions  \cite{goncalves2014BG,gubinelli2018uniqueness}, which have been applied \textit{e.g.~}in \cite{blondel2016,GJSi2017,GPS2020,
	yang2021stochastic};
and paracontrolled distributions \cite{gubinelli2015paracontrolled,gubinelli2017kpz}. 

\subsection{KPZ equation on the positive half-line} 
A half-space analogue of Bertini-Giacomin's result \cite{bertini1997stochastic} was proved in \cite{corwin2016open}. More  precisely,  under some condition on injection and ejection rates at the origin of ASEP on the half-line, and assuming that the effective density at the origin imposed by those rates scales as $\rho\approx\frac{1}{2}(1+(A+\frac{1}{2})\eps)$, \cite{corwin2016open} shows that the  height function of ASEP on the half-line, properly rescaled, converges to the KPZ equation on $\mathbb R_{+}$ with Neumann type boundary condition $\partial_x h(t,0)=A$. As in \cite{bertini1997stochastic}, this result is proved via the discrete Hopf-Cole transform  \eqref{eq:discreteHopfCole} of ASEP height function $h_t(k)$ ($k\in\N$), which is shown to converge to the SHE on $\R_+$ with Robin type boundary condition 
\begin{equation}\begin{cases} \partial_t\mc Z=\frac{1}{2}\partial_{xx}\mc Z+\mc Z\xi, \\
		\partial_x \mc Z(t,0) = A \mc Z(t,0).\end{cases} 
	\label{eq:SHEhalf}
\end{equation}
Since $\mc Z(t, \cdot)$ is not differentiable, the boundary condition should rather be imposed on the half-space heat kernel which is used to define the solution, we refer to \cite{corwin2016open} for details. The result of \cite{corwin2016open} was restricted to $A\geq 0$ and near-equilibrium initial conditions (see Definition~\ref{de:near-eq-cont} below). It was then  extended to all $A\in \mathbb R$ and the empty initial condition in \cite{parekh2017kpz}.  Let us note that on the full-line, the extension of the convergence result of \cite{bertini1997stochastic} to step initial condition was first discussed in \cite{amir2011probability}, with considerably less details than in \cite{parekh2017kpz}.  Some of these results were further extended in \cite{yang2020kardar} to generalizations of ASEP, in the spirit of \cite{dembo2016weakly}.  An alternative way to make sense of the KPZ equation on a half-line via regularity structures was also considered in \cite{gerencser2017singular}.

In the physics literature, the solution  $\mc Z$ to the SHE is understood as the partition function for a continuous Brownian directed polymer in a white noise potential  $\xi$. The boundary parameter $A$ can then be understood as controlling an extra energy collected by polymer paths, given by reflected Brownian motions, along the boundary \cite{borodin2016directed}. Alternatively, we may assume that there is no extra potential on the boundary, but the Brownian paths in the polymer partition function have elastic reflection on the boundary controlled by the parameter $A$. Discrete directed polymers are another family of models, with exclusion processes, which converge to the KPZ equation in full-space  \cite{alberts2014intermediate} or half-space \cite{wu2018intermediate, parekh2019positive, barraquand2022stationary}.

\begin{remark}
	The KPZ equation (and the associated SHE via Hopf-Cole transform) has also been considered on an interval with Neumann type boundary conditions. The discrete Hopf-Cole transform of open ASEP height function satisfies Robin type boundary conditions \cite{gonccalves2017nonequilibrium} and converges to the KPZ equation on an interval \cite{corwin2016open, parekh2017kpz, GPS2020, yang2020kardar}.
\end{remark}

\subsection{Main results}

As previously mentioned, the problem of fluctuations of the FASEP \oo{started from a step initial condition} reduces to the fluctuations of ASEP on the half-line with, at the origin, injections of particles at the right-jump rate $p=\frac12e^{\eps}$ and no ejection of particles. This translates into a microscopic boundary condition for the discrete Hopf-Cole transform \eqref{eq:discreteHopfCole} given by 
$$ Z_t(-1)=\mu Z_t(0),\qquad \text{with }\mu\approx  1-\eps,$$ 
as stated more precisely in \eqref{eq:discDelta} below. In contrast, in the setting of \cite{corwin2016open, parekh2017kpz}, the boundary parameter $\mu$ is scaled as $\mu\approx 1-\eps^2 A$. 
We will prove below that in our case, the appropriate scaling of the solution is different from \cite{corwin2016open, parekh2017kpz}: we will set 
$\mathcal Z_{t}^{\eps}(x) = \varepsilon^{-2}Z_{\varepsilon^{-4}t}(\varepsilon^{-2}x)$ for any macroscopic point $x$ and macroscopic time $t$ (with linear interpolation), and prove that starting from the step initial condition for the FASEP (\textit{i.e.}~empty initial condition for the half-line ASEP), $\mathcal Z_{t}^{\eps}(x)$ weakly converges as a continuous process (see Theorem \ref{thm:conv} for a precise statement) to the solution of the SHE 
\begin{equation}
	\begin{cases} \partial_t\mc Z=\frac{1}{2}\partial_{xx}\mc Z+\mc Z\xi, \\
		\mc Z(t,0) = 0,  \\
		\mc Z(0,x) = -2 \delta_0'(x),\end{cases} 
	\label{eq:SHEours}
\end{equation} where $\delta_0'$ is the derivative of the delta Dirac distribution at $0$.
We also prove a similar statement for near-equilibrium initial conditions (Theorem \ref{t:cvneareq}), under the same scaling as in  \cite{corwin2016open}. 
The SHE with Dirichlet boundary condition ($\mc Z(t,0) = 0$) was already considered in \cite{parekh2019positive} in the context of directed polymer models, but only for another type of initial condition. Here the initial condition $\delta_0'$ that we consider is very singular. We provide a more precise definition of this stochastic PDE in Definition \ref{d:she} and prove existence and uniqueness of the solution in Proposition \ref{prop:existenceuniqueness} below. Dealing with this very singular initial condition is one of the main novelties of this paper. 
Eventually, our main result is therefore the following: the macroscopic fluctuations of the field of the first particles' positions in the weakly asymmetric FASEP \oo{or ASEP} are given (\textit{via} the Hopf-Cole transform) by the solution to the KPZ equation on the half-line $\mathbb R_+$, with initial condition being the derivative of a Dirac distribution, and with Dirichlet boundary condition at the origin.

\subsection{Comparison with previous literature on half-space KPZ equation}
\label{s:comp}

In a sense, the microscopic boundary condition $Z_t(-1)=\mu Z_t(0)$ with $\mu\approx 1-\eps$ that we are considering is the $A\rightarrow\infty$ limit of the setting in \cite{corwin2016open, parekh2017kpz} which considered $\mu\approx 1-A\eps^2$. \oo{It poses no additional difficulty in terms of handling the boundary condition in heat kernel estimates, except that we need to control precisely the killing probability of a random walk following this heat kernel. This novel estimate is presented in Lemma~\ref{lem:killing}. Moreover}, as already said, our initial condition is much more singular than the ones in \cite{corwin2016open, parekh2017kpz, parekh2019positive}. In particular, it turned out to be surprisingly challenging, first to prove tightness, then 
to identify rigorously the initial condition.

To resolve these difficulties, one important tool that we use is an explicit moment integral formula for half-line ASEP coming from  \cite{barraquand2022markov}, using a Markov duality. The latter has already appeared in proofs of convergence to the KPZ equation. For ASEP, Gartner's  microscopic Hopf-Cole transform \cite{gartner1987transform} may be seen as a very special case of a more general Markov duality \cite{schuetz1997duality} between ASEP, viewed as a Markov process on $\lbrace 0,1\rbrace^{\mathbb Z}$, and another ASEP on $\mathbb Z$ with a fixed number $m$ of particles. Gartner's result essentially corresponds to the case when the dual process has only $m=1$ particle (see \cite[Section 3]{schuetz1997duality}).  In \cite{corwin2020stochastic}, the authors use Markov duality for the stochastic six vertex model. The $m=1$ particle duality is used to obtain a discrete Hopf-Cole transform which satisfies a discrete heat equation, as usual, but authors go further: they  also use  the $m=2$ particle duality in order to obtain second moment estimates useful in the proof of tightness. In the present work, the Hopf-Cole transform is the same as usual, and we prove tightness using a standard method going back to  \cite{bertini1997stochastic}. Our use of duality however comes when proving the convergence for the delta prime initial condition. We use a formula for the $m$-th moment of $\mathcal Z_{t}^{\eps}(x)$, coming from the $m$--particle duality satisfied by half-line ASEP \cite{barraquand2022markov}. As hinted above, it enters twice in the proof. First, in the proof of tightness, to obtain the moment bounds which are crucial to prove that the solution is near-equilibrium after running the process for any positive time $\delta$ {(Sections~\ref{subs:momentsZ} and \ref{subs:neareq})}. Second, we use the formula from \cite{barraquand2022markov} for the second moment to obtain a very precise control on the  second moment of $\mathcal Z_{t}^{\eps}(x)$ in terms of the continuous Dirichlet heat kernel {(Lemma~\ref{lem:limit2})}. This is crucial in the rigorous identification of the initial condition {(Section \ref{sec:conclusion})}.                                   

The continuous directed polymer model corresponding to the solution of \eqref{eq:SHEours} was considered in the physics paper \cite{gueudre2012directed}. The polymer paths are conditioned not to hit the boundary, and \cite{gueudre2012directed} studied the distribution of the partition function of polymers starting and ending at a location $\eta$, after letting $\eta \to 0 $ and appropriately rescaling the partition function by $\eta^2$. Restricting on test functions $f:\mathbb R_+\to \R$ such that $f(0)=0$, the distribution $\frac{1}{\eta}\delta_0(\cdot -\eta)$, where $\delta_0$ is the delta Dirac distribution, converges to $-\delta_0'$. This explains why the initial condition that we consider in the present paper is the physically natural one for the SHE with Dirichlet boundary condition, though the solution was never mathematically constructed before. 

Finally, let us mention that the KPZ equation on a half-line with Dirichlet boundary condition is also considered in \cite{gerencser2017singular}, but there the Dirichlet  boundary condition $h(t,0)=0$ is imposed on the KPZ equation itself and not on the SHE, so that this corresponds to a completely different stochastic PDE than the one we consider in the present paper.

\subsection{Outline of the paper}
In Section~\ref{s:coupling}, we define the FASEP and \mm{half-line} ASEP and construct a mapping that connects the two. Section~\ref{s:results} is devoted to the presentation of the Hopf-Cole transform, in terms of which we state our main results on the fluctuations of the particle positions in a weakly asymmetric regime. We then provide several preliminary results: first, in Section \ref{sec:proofshe}, the existence and uniqueness of the macroscopic SHE with $\delta_0'$ initial condition and Dirichlet boundary condition ; and second, in Section \ref{sec:heatkernel}, some explicit estimates on the discrete heat kernel with diverging Robin boundary condition \mm{together with \oo{a new killing estimate}}. The main convergence results cover two types of initial conditions which require different scalings: the \mm{empty} initial condition (Theorem~\ref{thm:conv})  and near-equilibrium initial conditions (Theorem~\ref{t:cvneareq}), which both require new arguments. We actually need the understanding of the latter as an intermediate step towards Theorem~\ref{thm:conv}, so we start by proving Theorem~\ref{t:cvneareq} in Section~\ref{sec:neareq}. Our main result, Theorem~\ref{thm:conv} is finally proved in Section~\ref{s:deltaprime}. 

We also provide in Appendix~\ref{app:Holderlimit} a proof of a H\"older estimate \mm{for the rescaled microscopic Hopf-Cole transform}, in the case of \mm{empty} initial condition, in the $\eps\rightarrow 0$ limit. This in itself is not sufficient to show the near-equilibrium property that we need to prove Theorem~\ref{thm:conv}, but we include it because it gives a better understanding of where the regularity comes from.

\section{Microscopic models and mapping}\label{s:coupling}

In the following, $\N$ denotes the set of non-negative integers, $\N^*$ the set of positive integers. We use $k,\ell, j$ to denote the microscopic (discrete) space variables, and $x,y, z$ to denote the macroscopic ones.

\medskip

The \emph{facilitated asymmetric exclusion process} in dimension one (FASEP) is a Markov process on the state space $\Omega:=\{0,1\}^\Z$ which is denoted by $\{\eta_t(k)\; ; \; k \in\Z\}_{t\geqslant 0}$. Each component $\eta_t(k)\in\{0,1\}$ is the occupation variable of the configuration of particles at site $k\in\Z$.

Let $p,q\in(0,1)$ be two asymmetry parameters. The time evolution of the particle configurations is ruled by the Markov generator $\cL_F$ which acts on functions $f:\Omega\to\R$ as follows:
\begin{align} \cL_F f(\eta)=& \sum_{k\in\Z}p\eta(k-1)\eta(k)(1-\eta(k+1))\big[f(\eta^{k,k+1})-f(\eta)\big] \notag\\ & + \sum_{k\in\Z}  q\eta(k+1)\eta(k)(1-\eta(k-1))\big[f(\eta^{k,k-1})-f(\eta)\big]\label{eq:L}\end{align}
where $\eta^{k,\ell}$ is the configuration obtained from $\eta$ after the exchange of the occupation variables $\eta(k) \leftrightarrow \eta(\ell)$, namely:
$\eta^{k,\ell}(j):=\eta(k)\mathbf{1}_{j=\ell} + \eta(\ell) \mathbf{1}_{j=k}+\eta(j)\mathbf{1}_{j\notin\{ k,\ell\}}.$
In other words, as it can be read on the generator, particles are displayed on the lattice $\Z$ and jump to their neighbouring sites at rates which encode the following rules:
\begin{itemize}
	\item a jump to the right from site $k$ to site $k+1$ occurs with rate $p$ if and only if site $k$ is occupied by a particle, site $k+1$ is empty \emph{and} site $k-1$ is occupied ; 
	\item a jump to the left from site $k$ to site $k-1$ occurs with rate $q$ if and only if site $k$ is occupied by a particle, site $k-1$ is empty \emph{and} site $k+1$ is occupied.
\end{itemize}
We say that  site $k\in\Z$ is occupied by an \emph{active particle}\footnote{\mm{Note that we use a definition of \emph{active particles} which is different from the one chosen in \cite{DAES} for instance, but it has no impact on the rest of the paper.}} if it can jump either to $k-1$ or $k+1$ with positive rate, in other words, it is such that
\[\eta(k-1)\eta(k)(1-\eta(k+1))+(1-\eta(k-1))\eta(k)\eta(k+1)=1. \]
Let us now map this process onto another exclusion process. We need to introduce some notation. Let 
$L<R$ be two integers, and define:
\[\bar{\Omega}_{L,R}:=\Bigg\{\eta\in\Omega\; ; \; \begin{cases} \eta(k)=0 & \text{ if }k\geq R, \\ \eta(k)=1 & \text{ if }k\leq L, \\  \eta(k)+\eta(k+1)\geq 1 & \text{ if }k<R. \end{cases}\; \Bigg\}\]
and \[ \bar\Omega:=\bigcup_{L<R} \bar\Omega_{L,R}.\]
In other words, if $\eta \in \bar\Omega_{L,R}$ then the active particles are all contained in the box $\{ L, L+1, \dots, R-1\}$.
The set $\bar\Omega$ is remarkable because it is preserved along time evolution of the dynamics generated by $\cL_F$.

For any $\eta\in\bar \Omega$, we can label particles from right to left, by the following recursive procedure: define 
\begin{align*}
	X_1(\eta) & :=\max\lbrace j; \eta(j)=1\rbrace , \\ X_{i+1}(\eta) & :=\max\big\{j<X_i(\eta)\; ; \;  \eta(j)=1\big\}.
\end{align*}
We are now ready to construct the mapping: let  $\mathfrak{S}:\bar{\Omega}\rightarrow \{0,1\}^{\N^*}$ be the application such that,
\[ \text{ for any }\eta\in\bar\Omega, \quad \sigma:=\mathfrak{S}(\eta) \quad \text{satisfies} \quad \sigma(i)=1-\eta(X_i-1).\] 
In other words, site $i$ is occupied by a particle in $\sigma$ (\textit{i.e.}~$\sigma(i)=1$) if and only if the  $i$-th particle in $\eta$ has an empty site to its left.

\begin{figure}[h!]
	\begin{center}
		
		\begin{tikzpicture}[scale=0.65]
			\draw[thick] (-9.5,0) -- (8.5,0);
			\foreach \x in {-9,...,8}
			\draw (\x, -0.1) -- (\x,0.1);
			\fill (-9,0.6) circle(0.35);
			\draw[white] (-9,0.6) node{\footnotesize  $\mathbf{11}$};
			\fill (-8,0.6) circle(0.35);
			\draw[white] (-8,0.6) node{\footnotesize  $\mathbf{10}$};
			\fill (-7,0.6) circle(0.35);
			\draw[white] (-7,0.6) node{\footnotesize  $\mathbf{9}$};
			\fill (-6,0.6) circle(0.35);
			\draw[white] (-6,0.6) node{\footnotesize  $\mathbf{8}$};
			\fill (-4,0.6) circle(0.35);
			\draw[white] (-4,0.6) node{\footnotesize  $\mathbf{7}$};
			\fill (-2,0.6) circle(0.35);
			\draw[white] (-2,0.6) node{\footnotesize  $\mathbf{6}$};
			\fill (-1,0.6) circle(0.35);
			\draw[white] (-1,0.6) node{\footnotesize  $\mathbf{5}$};
			\fill (0,0.6) circle(0.35);
			\draw[white] (0,0.6) node{\footnotesize  $\mathbf{4}$};
			\fill (2,0.6) circle(0.35);
			\draw[white] (2,0.6) node{\footnotesize  $\mathbf{3}$};
			\fill (4,0.6) circle(0.35);
			\draw[white] (4,0.6) node{\footnotesize  $\mathbf{2}$};
			\fill (5,0.6) circle(0.35);
			\draw[white] (5,0.6) node{\footnotesize  $\mathbf{1}$};
			\draw (-6,-.5) node{$L$};
			\draw (6,-.5) node{$R$};
			\draw[-stealth, thick, red] (5,1) to[bend left] (6,1);
			\draw (5.5,1.4) node{$p$};
			\draw[-stealth, thick, green] (0,1) to[bend left] (1,1);
			\draw (0.5,1.4) node{$p$};
			\draw[-stealth, thick, orange] (-6,1) to[bend left] (-5,1);
			\draw (-5.5,1.4) node{$p$};
			\draw[-stealth, thick, blue] (4,1) to[bend right] (3,1);
			\draw (3.5,1.4) node{$q$};
			\draw[-stealth, thick, magenta] (-2,1) to[bend right] (-3,1);
			\draw (-2.5,1.4) node{$q$};
			\node[anchor=west] at (8.7,0) {FASEP on $\Z$};

			\begin{scope}[xshift=-7cm, yshift=-3.3cm, scale=1.2] 
				\draw[thick] (0,0) -- (11,0);
				\node[anchor=west] at (11.1,0) {half-line ASEP};

				\draw[thick] (-1,0) circle(1);
				\draw (-1,0) node{\footnotesize reservoir};
				\foreach \x in {1,...,10}
				\draw (\x, 0.1) -- (\x,-0.1) node[anchor=north]{$\x$};
				\foreach \x in {2,3,6,7} 
				\fill (\x, 0.5) circle(0.3);
				\draw[-stealth, thick, red] (-0.3,0.8) to[bend left] (1,0.8);
				\draw (0.5,1.4) node{$p$};
				\draw[-stealth, thick, green] (3,0.8) to[bend left] (4,0.8);
				\draw (3.5,1.4) node{$p$};
				\draw[-stealth, thick, orange] (7,0.8) to[bend left] (8,0.8);
				\draw (7.5,1.4) node{$p$};
				\draw[-stealth, thick, blue] (2,0.8) to[bend right] (1,0.8);
				\draw (1.5,1.4) node{$q$};
				\draw[-stealth, thick, magenta] (6,0.8) to[bend right] (5,0.8);
				\draw (5.5,1.4) node{$q$};
			\end{scope}
		\end{tikzpicture} 
	\end{center}
	\caption{The top figure represents a configuration $\eta$ in $\bar\Omega$ and the possible transitions with their respective rates. The bottom figure represents the mapped configuration $\mathfrak{S}(\eta)$ on $\lbrace 0,1\rbrace^{\mathbb N^*} $ and the possible transitions, in the same color as the corresponding transitions in $\eta$.}
	\label{fig:FASEP-OpenASEP}
\end{figure}
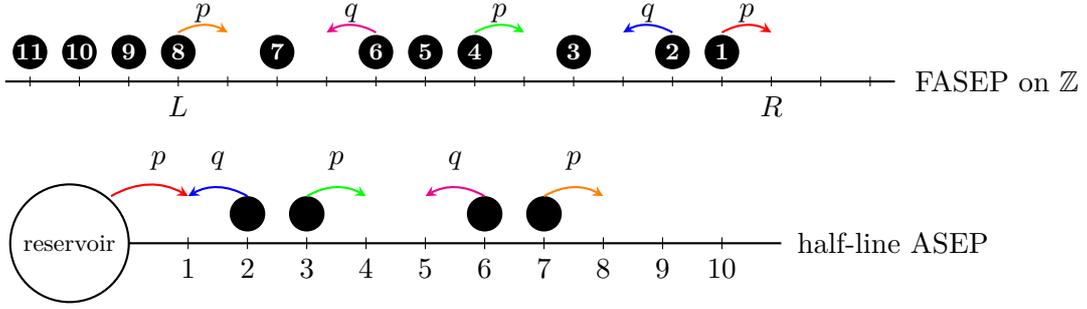

In particular, the \emph{step initial condition} $\eta_0$ given by \[ \eta_0(k)=\mathbf{1}_{k\leqslant k_0} \text{  for some }k_0\in\Z,\] belongs to $\bar\Omega$, and it corresponds to an \emph{empty configuration} $\sigma_0=\mathfrak{S}(\eta_0)$, namely \begin{equation}\label{rem:step}\sigma_0(k)=0 \qquad\text{for any } k\in\N^*.\end{equation}  These initial conditions will be important in what follows. 
The dynamics of the mapped process is described in the following lemma. Note that with the step initial condition, for all $t\geq 0$, $X_i(\eta_t)-X_{i+1}(\eta_t)\in\{1,2\}$, and $\mathfrak{S}(\eta_t)$ encodes all the information about these spacings.
\begin{lemma}
	Assume that the generator of the Markov process $\{\eta_t\}_{t\geq 0}$ is $\cL_F$ given in \eqref{eq:L}, then $\{\mathfrak{S}(\eta_t)\}_{t\geq 0}$ is an \emph{asymmetric simple exclusion process} on the infinite half-line $\N^*$ with a boundary reservoir which injects particles at rate $p$. More precisely its generator is given as follows: for any $\sigma \in \Sigma:=\{0,1\}^{\N^*}$, for any $f:\Sigma\to\R$,
	\begin{align}\cL f(\sigma)=& \sum_{k=1}^\infty \Big(p\sigma(k)(1-\sigma(k+1))+q\sigma(k+1)(1-\sigma(k))\Big)\Big[f(\sigma^{k,k+1})-f(\sigma)\Big] \notag \\ & \qquad +\,p(1-\sigma(1))\Big[f(\sigma^1)-f(\sigma)\Big],\label{e:genASEP}\end{align}
	where $\sigma^1$ is obtained after the creation of a particle at site 1, \emph{i.e.}~$\sigma^1(k)=\mathbf{1}_{k=1}+\sigma(k)\mathbf{1}_{k\neq 1}$.
\end{lemma}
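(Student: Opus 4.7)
The plan is to verify the generator identity directly: for any bounded cylinder function $g:\Sigma\to\R$, setting $f:=g\circ\mathfrak{S}$ on $\bar\Omega$, I would show $\cL_F f(\eta) = (\cL g)(\mathfrak{S}(\eta))$ for every $\eta\in\bar\Omega$. Combined with the already observed invariance of $\bar\Omega$ under $\cL_F$, this identifies the law of $\{\mathfrak{S}(\eta_t)\}_{t\geq 0}$ as that of a Markov process with generator $\cL$.

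The key observation is that on $\bar\Omega$, the configuration $\eta$ is determined (up to the overall position of particle $1$) by the sequence of spacings $X_i-X_{i+1}$, each of which is forced to be $1$ or $2$ in the active region by the constraint $\eta(x)+\eta(x+1)\geq 1$. The map $\mathfrak{S}$ records exactly these spacings: by definition $\sigma(i)=1-\eta(X_i-1)$, equivalently $\sigma(i)=\mathbf{1}\{X_{i+1}=X_i-2\}$, and a one-line check yields the companion identity $\sigma(i-1)=1-\eta(X_i+1)$ for $i\geq 2$, while at the right end $\eta(X_1+1)=0$ automatically.

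I would then translate the FASEP transitions one by one via this dictionary. A right jump of the $i$-th particle with $i\geq 2$ has rate $p\,\eta(X_i-1)(1-\eta(X_i+1))=p\,\sigma(i-1)(1-\sigma(i))$, and inspecting the action of $\mathfrak{S}$ shows that it flips $(\sigma(i-1),\sigma(i))$ from $(1,0)$ to $(0,1)$ while leaving all other coordinates untouched, which is precisely a right exclusion move from site $i-1$ to site $i$. Left jumps of particles $i\geq 2$ are handled symmetrically and produce the $q$-rate left moves. Since $\eta(X_1+1)=0$, the only transition involving the first particle is its right jump, occurring at rate $p(1-\sigma(1))$ and flipping $\sigma(1)$ from $0$ to $1$; this exactly matches the boundary creation term in \eqref{e:genASEP}. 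Summing these contributions reproduces the asserted generator.

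The only genuinely delicate point --- and hence the step I would focus on --- is the bookkeeping of particle labels: I must check that a jump of particle $i$ never crosses nor merges with particle $i\pm 1$, so that the ordering $X_1>X_2>\ldots$ and thus the labelling itself is preserved along the dynamics. This is automatic, since the active spacings are always $\geq 1$ and a jump modifies a spacing by exactly $\pm 1$. Beyond this verification and the clean dictionary just set up, the proof is an entirely algebraic translation of the FASEP rates into the half-line ASEP rates, with the boundary reservoir emerging naturally from the absence of the $(i-1)$-th neighbour when $i=1$.
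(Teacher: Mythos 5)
Your proposal is correct and takes essentially the same approach as the paper, which simply asserts that the result follows ``by looking at every possible transition'' and refers to Figure~\ref{fig:FASEP-OpenASEP}. You have merely made explicit the dictionary (that $\sigma(i)=\mathbf{1}\{X_{i+1}=X_i-2\}$ and, for $i\geq 2$, $\sigma(i-1)=1-\eta(X_i+1)$, valid since spacings in $\bar\Omega$ are $1$ or $2$) and the resulting transition-by-transition identification of rates and moves, which is exactly the content conveyed pictorially in the paper's figure.
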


\begin{proof}
	The  proof is straightforward by looking at every possible transition, see Fig.~\ref{fig:FASEP-OpenASEP}.
\end{proof}

\section{Main results and strategy of the proof}\label{s:results}

From now on we consider the half-line ASEP denoted by $\{\sigma_t\}_{t\geq 0}$, which is a Markov process on $\Sigma=\{0,1\}^{\N^*}$ generated by $\cL$ given in \eqref{e:genASEP}. 

\subsection{Hopf-Cole transform}

Let us define the \emph{height function} associated with the particle system: for any $k\in\N^*$, 
\[h_t(k):=h_t(0)+\sum_{i=1}^k (2\sigma_t(i)-1), \qquad \text{with }h_t(0)=2\sum_{i=1}^\infty  \sigma_0(i) - 2\sum_{i=1}^\infty  \sigma_t(i).\] 
This is well-defined when $\sum_{k=1}^\infty\sigma_0(k)<\infty$. In that case,  note that $- h_t(0)$ is equal to \emph{twice} the number of particles that have entered into the system between times $0$ and $t$ (no particle can exit the system by hypothesis). For any initial condition $\sigma_0\in \Sigma$ and any $t>0$, this number is bounded by a Poisson random variable with parameter $pt$, and in particular, it is almost surely finite. In the case where the initial number of particles in the system is infinite, we can still define $h_t(0)$ as minus twice the number of particles that have entered the system. Thus, starting from any initial condition $\sigma_0\in \Sigma$, the height function satisfies: for any $t>0$, any $k\in\N$,  $h_t(k)<\infty$ a.s. For $\nu,\lambda\in\R$ which will be chosen later we then define the \emph{Hopf-Cole transform}\begin{equation}Z_t(k):=e^{-\lambda h_t(k)+\nu t}, \qquad k \in \N.\label{eq:HopfCole}\end{equation}
Alternatively, $Z_t$ can be defined as a function of the positions of the particles in the FASEP. It is straightforward to check that, under the coupling described in Section~\ref{s:coupling}, for the FASEP started from the step initial condition, the position of the $m$-th particle in the FASEP is related to the height function with empty initial condition through
\[X_m(\eta_t)=\sum_{i=m}^\infty\sigma_t(i)-m.\]
Consequently, the Hopf-Cole transform can be recast as 
\begin{equation}
	Z_t(k)=e^{2\lambda X_{k+1}(\eta_t)+(3k+2)\lambda+\nu t}.
\end{equation}
We will now mainly work with the Hopf-Cole transform $Z$ and we state our results in terms of this quantity. In the following, for any function $f:\Z\to\R$ we define its left and right gradients by
\[ \nabla^+f(k):=f(k+1)-f(k), \qquad \nabla^-f(k):=f(k-1)-f(k)\] and its discrete Laplacian by
$\Delta f(k):=f(k+1)+f(k-1)-2f(k).$ One knows that
$Z$ satisfies
\[dZ_t(k)=(\nu Z_t(k)+\cL Z_t(k))dt+dM_t(k), \qquad k \in\N,\]
where $\{M_t(k)\}_{t\geqslant 0}$ are martingales whose quadratic variations will be computed below (\mm{Lemma \ref{lem:quadratic}}). As in \cite{corwin2016open,parekh2017kpz} (see also \cite{gonccalves2017nonequilibrium}),  we look for conditions on $\nu,\lambda$ so that $\nu Z_t(k)+\cL Z_t(k)$ can be rewritten as $D \Delta Z_t(k)$ for some diffusion coefficient $D$. 
After straightforward computations (given in Appendix \ref{app:comp}) we choose 
\begin{equation}\label{eq:choice} \lambda=\frac12 \log \frac{q}{p}, \qquad \nu=q+p-2\sqrt{pq},\end{equation} which imply: for any $k\in\N^*$,
\begin{equation}\label{eq:discDelta}
	dZ_t(k)=D \Delta Z_t(k) dt + dM_t(k), \qquad \text{with } D=\sqrt{pq}.\end{equation}
It remains to define $Z_t(-1)$ in order that  \eqref{eq:discDelta} remains valid at the boundary point $k=0$, which can be done if we let
\begin{equation}\label{eq:boundary} Z_t(-1): = \mu Z_t(0), \qquad\text{with } \mu=\sqrt{\frac{q}{p}}=e^\lambda.\end{equation}
We therefore obtain the following result.
\begin{lemma}\label{lem:quadratic}We assume the choice of parameters \eqref{eq:choice} and define $\mu=\sqrt{q/p}$. Let $\Delta^\mu$ be the discrete Laplacian on $\N$ with the following boundary condition:
	\begin{equation}
		\label{eq:deltamu}
		\Delta^\mu f(k):= \begin{cases}
			f(k+1)+f(k-1)-2f(k) & \text{ if } k >0 \\
			f(1)+\mu f(0) - 2 f(0) & \text{ if }k=0.
		\end{cases}
	\end{equation}
	Then, for any $k\neq \ell\in\N$, the following three continuous-time processes are martingales: 
	\begin{align} &M_t(k):= Z_t(k)-Z_0(k)-D\int_0^t \Delta^\mu Z_s(k)ds,\\
	&M_t(k)^2 - \int_0^t \frac{d}{ds} \oo{\langle M(k)\rangle}_s \; ds,\\
	&M_t(k)M_t(\ell),\vphantom{\bigg(}
	\end{align} where $\oo{\langle M(k)\rangle}_\cdot$ denotes the \oo{predictable} quadratic variation of the martingale $M_\cdot(k)$.
	Moreover, 
	\[
	\frac{d}{dt}\oo{\langle M(k)\rangle}_t = \begin{cases} Z_t(k)^2\left({\sigma}_t(k)(1-{\sigma}_t(k+1))\frac{(p-q)^2}{p}+{\sigma}_t(k+1)(1-{\sigma}_t(k))\frac{(p-q)^2}{q}\right) & \text{if } k >0 ,\vphantom{\bigg(}
		\\
		Z_t(0)^2(1-{\sigma}_t(1))\frac{(p-q)^2}{p} & \text{if } k=0. \vphantom{\bigg(} \end{cases}
	\]
\end{lemma}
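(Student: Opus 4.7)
\medskip

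The plan is to verify the three martingale assertions by direct computation with the generator $\cL$, reducing everything to Dynkin's formula once the key algebraic identity $\nu Z_t(x) + \cL Z_t(x) = D \Delta^\mu Z_t(x)$ is established. Because $h_t(x) = h_t(0) + \sum_{k=1}^x (2\sigma_t(k)-1)$, an inspection of the three types of transitions in $\cL$ shows: a bulk right jump from $z$ to $z+1$ changes $h_t(y)$ only at $y=z$ (by $-2$); a bulk left jump from $z$ to $z-1$ changes $h_t(y)$ only at $y=z-1$ (by $+2$); and a boundary injection at site $1$ changes $h_t(0)$ by $-2$ while leaving $h_t(y)$ unchanged for $y\geq 1$ (the shift of $\sum_{k=1}^y (2\sigma_t(k)-1)$ by $+2$ is exactly compensated by the shift of $h_t(0)$). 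In particular each elementary transition modifies $Z_t(x)$ for at most one value of $x$, which will immediately yield the vanishing of the cross bracket.

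For $x\geq 1$, only the pair of transitions $(x\leftrightarrow x+1)$ affects $Z_t(x)$, multiplying it respectively by $e^{2\lambda}=q/p$ or $e^{-2\lambda}=p/q$. A short computation gives
\begin{equation*}
\cL Z_t(x) = (q-p)\bigl(\sigma_t(x)-\sigma_t(x+1)\bigr) Z_t(x).
\end{equation*}
On the other hand, from $Z_t(x\pm 1)=Z_t(x) e^{\mp\lambda(2\sigma_t(x_\pm)-1)}$ (with $x_+ = x+1$, $x_- = x$), and using $\sqrt{pq}(e^\lambda - e^{-\lambda}) = q-p$ together with $\sqrt{pq}(e^\lambda + e^{-\lambda}) - 2\sqrt{pq} = q+p-2\sqrt{pq} = \nu$, one obtains $D \Delta Z_t(x) = Z_t(x)\bigl[(q-p)(\sigma_t(x)-\sigma_t(x+1)) + \nu\bigr]$, whence the drift identity on the bulk. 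For $x=0$, only the boundary injection affects $Z_t(0)$, yielding $\cL Z_t(0) = (q-p)(1-\sigma_t(1)) Z_t(0)$; choosing $Z_t(-1) := \mu Z_t(0)$ with $\mu = e^\lambda = \sqrt{q/p}$, the same algebra gives $D \Delta^\mu Z_t(0) - \nu Z_t(0) = (q-p)(1-\sigma_t(1)) Z_t(0)$, matching $\cL Z_t(0)$. This is the one step that truly uses the prescribed value of $\mu$: the boundary condition is dictated precisely by the requirement that the drift of $Z_t(0)$ take the same discrete-Laplacian form as in the bulk. The Dynkin formula then gives that $M_t(x) = Z_t(x) - Z_0(x) - D \int_0^t \Delta^\mu Z_s(x)\, ds$ is a martingale.

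For the quadratic variation, the standard carré du champ formula $\frac{d}{dt}[M(x)]_t = \cL(Z_t(x)^2) - 2 Z_t(x)\cL Z_t(x) = \sum_{\sigma\to\sigma'} r(\sigma,\sigma') (Z'_t(x)-Z_t(x))^2$ applies. For $x\geq 1$, the two relevant transitions contribute $p\sigma_t(x)(1-\sigma_t(x+1))\cdot Z_t(x)^2 (q/p-1)^2$ and $q\sigma_t(x+1)(1-\sigma_t(x))\cdot Z_t(x)^2 (p/q-1)^2$, which simplify to the announced expression using $(q/p-1)^2/p = (q-p)^2/p^3 \cdot p = (q-p)^2/p$ after multiplication by the rate; at $x=0$, only the boundary injection contributes and gives $p(1-\sigma_t(1)) Z_t(0)^2 (q/p-1)^2 = (1-\sigma_t(1)) Z_t(0)^2 (p-q)^2/p$. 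Finally, for $x\neq y$, the observation above that each transition perturbs $Z_t$ at a single site gives $[M(x),M(y)]_t \equiv 0$, so $M_t(x) M_t(y) = \int_0^t M_s(x)\, dM_s(y) + \int_0^t M_s(y)\, dM_s(x)$ is a (local) martingale; integrability is not an issue since the processes involved are bounded on compact time intervals by deterministic exponentials of the Poisson count of transitions. The main (mild) obstacle is bookkeeping the boundary case cleanly; no analytic difficulty arises, as flagged by the parenthetical reference to Appendix~\ref{app:comp}.
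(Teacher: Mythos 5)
Your proof is correct and takes essentially the same route as the paper: you verify the drift identity $\nu Z + \cL Z = D\Delta^\mu Z$ from the key ratio identity (the paper's one-line proof cites precisely $Z_t(x+1)/Z_t(x)=\sqrt{q/p}\,(1-\sigma_t(x+1))+\sqrt{p/q}\,\sigma_t(x+1)$, deferring the parameter computations to Appendix~\ref{app:comp}), then compute the predictable quadratic variation via the carr\'e du champ and observe that distinct sites are updated by disjoint transitions so the bracket vanishes. You have simply written out the details that the paper labels ``straightforward.''
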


\begin{proof}
	This is \oo{standard (see e.g.~\@\cite[Appendix 1.5]{KL}): for instance, the predictable quadratic variation satisfies,
	for $k>0$,
	 \begin{align*}
	     \frac{d}{dt}\oo{\langle M(k)\rangle}_t&=\left[p\sigma_t(k)(1-\sigma_t(k+1))(e^{2\lambda}-1)^2+q\sigma(k+1)(1-\sigma(k))(e^{-2\lambda}-1)^2
	     \right]Z_t(k)^2,
	     \\
	     \frac{d}{dt}\oo{\langle M(0)\rangle}_t&=p(1-\sigma_t(1))(e^{2\lambda}-1)^2Z_t(0)^2.
	    \end{align*}}

\end{proof}

\subsection{Weak asymmetry}

From now on, we consider the half-line ASEP in the \emph{weak asymmetry regime} where 
\begin{equation}
	\label{eq:pq} p = \tfrac{1}{2}e^{\varepsilon} \qquad\text{and} \qquad q=\tfrac12 e^{-\varepsilon}, \qquad \text{for }\varepsilon>0.
\end{equation}
Rewriting everything in terms of $\varepsilon$, 
the Hopf-Cole transform reads as
\[ Z_t(k)=e^{\eps h_t(k)+\nu t}, \qquad \text{where }\nu =\tfrac12e^\eps(e^{-\eps}-1)^2.\] 
The boundary parameter $\mu$ appearing in \eqref{eq:boundary} and the diffusion coefficient $D$ become
\[ \mu=e^{-\eps}, \qquad D=\tfrac12.\]
In this weak asymmetry regime, the quadratic variation of the above martingale satisfies:

\begin{lemma}\label{lem:quadeps} As $\eps\to 0$ we have
	\begin{align} \label{eq:quadx}
		\frac{d}{dt}\oo{\langle M(k)\rangle}_t &= \eps^2Z_t(k)^2+\nabla^+Z_t(k)\nabla^-Z_t(k)+o(\eps^2)Z_t(k)^2,\quad \text{for any }k>0\\
		\frac{d}{dt}\oo{\langle M(0)\rangle}_t&=\eps^2Z_t(0)^2-\eps Z_t(0)\nabla^+Z_t(0)+o(\eps^2)Z_t(0)^2.\label{eq:quad0}
	\end{align}
	
\end{lemma}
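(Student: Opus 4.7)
The proof is a direct Taylor expansion in $\varepsilon$: I would compare the exact formula of Lemma \ref{lem:quadratic} to the claimed right-hand sides after substituting $p=\tfrac12e^\varepsilon$ and $q=\tfrac12e^{-\varepsilon}$. The preliminary computations I need are $(p-q)^2=\sinh^2\varepsilon=\varepsilon^2+o(\varepsilon^2)$ together with $1/p=2+O(\varepsilon)$ and $1/q=2+O(\varepsilon)$, which give $(p-q)^2/p=(p-q)^2/q=2\varepsilon^2+o(\varepsilon^2)$. In particular, the exact formula becomes, for $x>0$,
\[
\frac{d}{dt}[M(x)]_t = 2\varepsilon^2 Z_t(x)^2\bigl(\eta_t(x)(1-\eta_t(x+1))+\eta_t(x+1)(1-\eta_t(x))\bigr)+o(\varepsilon^2)Z_t(x)^2,
\]
and the indicator in parentheses equals $\mathbf{1}_{\eta_t(x)\neq \eta_t(x+1)}$. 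The analogous simplification at the boundary gives $\frac{d}{dt}[M(0)]_t=2\varepsilon^2 Z_t(0)^2(1-\eta_t(1))+o(\varepsilon^2)Z_t(0)^2$.

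For the other side I would use the identity from the proof of Lemma \ref{lem:quadratic}, namely $Z_t(x+1)/Z_t(x)=e^{-\varepsilon}(1-\eta_t(x+1))+e^{\varepsilon}\eta_t(x+1)$ and the analogous formula for $Z_t(x-1)/Z_t(x)$, to express
\[
\nabla^+Z_t(x)=Z_t(x)\bigl(e^{\varepsilon(2\eta_t(x+1)-1)}-1\bigr),\qquad \nabla^-Z_t(x)=Z_t(x)\bigl(e^{-\varepsilon(2\eta_t(x)-1)}-1\bigr).
\]
A case check over $(\eta_t(x),\eta_t(x+1))\in\{0,1\}^2$ then gives
\[
\nabla^+Z_t(x)\,\nabla^-Z_t(x)=Z_t(x)^2\varepsilon^2\bigl(2\,\mathbf{1}_{\eta_t(x)\neq \eta_t(x+1)}-1\bigr)+o(\varepsilon^2)Z_t(x)^2,
\]
where the two off-diagonal cases each produce $(e^{\pm\varepsilon}-1)^2=\varepsilon^2+o(\varepsilon^2)$ and the two diagonal cases produce $(e^{\varepsilon}-1)(e^{-\varepsilon}-1)=-\varepsilon^2+o(\varepsilon^2)$. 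Adding $\varepsilon^2 Z_t(x)^2$ to this expression reproduces exactly the simplified form of the quadratic variation obtained above, which proves \eqref{eq:quadx}.

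For the boundary identity \eqref{eq:quad0} I would expand $-\varepsilon Z_t(0)\nabla^+Z_t(0)=-\varepsilon Z_t(0)^2(e^{\varepsilon(2\eta_t(1)-1)}-1)$ on the two possible values of $\eta_t(1)$: when $\eta_t(1)=0$ this equals $\varepsilon^2 Z_t(0)^2+o(\varepsilon^2)Z_t(0)^2$, so that together with $\varepsilon^2 Z_t(0)^2$ it reproduces $2\varepsilon^2 Z_t(0)^2$; when $\eta_t(1)=1$ it equals $-\varepsilon^2 Z_t(0)^2+o(\varepsilon^2)Z_t(0)^2$, cancelling the leading $\varepsilon^2 Z_t(0)^2$ and matching the vanishing of $(1-\eta_t(1))$. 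This is a pure Taylor expansion with no real obstacle; the only thing one has to be careful about is keeping all contributions of order $\varepsilon^2$, since the $o(\varepsilon^2)$ error will later be multiplied by $Z_t(x)^2$ and summed over the whole lattice when passing to the scaling limit.
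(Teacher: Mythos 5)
Your proof is correct and is, at its core, the same Taylor expansion in $\varepsilon$ that the paper performs; the only difference is organizational — you verify the claim by an exhaustive case check on $(\eta_t(x),\eta_t(x+1))$, whereas the paper first rewrites the exact quadratic variation as $\eps^2\big(\tfrac12 Z_t(x)^2+\tfrac12 Z_t(x+1)Z_t(x-1)\big)+\nabla^+Z_t(x)\nabla^-Z_t(x)+o(\eps^2)Z_t(x)^2$ and then collapses the first term. Your closing remark about why the error must be tracked multiplicatively as $o(\eps^2)Z_t(x)^2$ rather than an additive $o(\eps^2)$ is exactly the right point to flag.
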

Note that the term $\eps Z_t(0)\nabla^+Z_t(0)$ at the boundary is new in our case, it did not appear in the case of \cite{corwin2016open}.

\begin{proof}
\oo{Using 
\[\frac{Z_t(k+1)}{Z_t(k)}=\sqrt{\frac q p}\;\big(1-{\sigma}_t(k+1)\big)+\sqrt{\frac p q}\;{\sigma}_t(k+1),\]
we can rewrite the expression in Lemma \ref{lem:quadratic} as
\begin{align*}\frac{d}{dt} \langle M(k)\rangle_t=&\; \sqrt{pq}(Z_t(k-1)-\mu^{-1} Z_t(k))(\mu Z_t(k+1)-Z_t(k)) \\ & +\sqrt{pq}(\mu^{-1} Z_t(k-1)-Z_t(k))(Z_t(k+1)-\mu Z_t(k)).
	\end{align*}
	In the weak asymmetry regime, we get
	\begin{align*} 
	\nabla^+Z_t(k)&=  \eps Z_t(k) \big( 2 \sigma_t(k+1)-1 + o(1)\big),\\
	\nabla^-Z_t(k)&= \eps Z_t(k) \big(1- 2 \sigma_t(k) + o(1)\big),	
	\end{align*}
	which allows us to conclude easily.}
\end{proof}

\subsection{Main theorems}

Before stating our main results, let us start by defining the notion of solution for the stochastic heat equation which  is at the core of the convergence results of this paper.

\begin{definition}\label{d:she} Let $\xi$ be the standard space-time white noise on $\R_+\times\R_+$, on some probability space $(\Omega,\mathcal{F},\mathbb{P})$.
	We say that $(\mathcal{Z}_t(x))_{t\in[0,T], x\in\R_+}$ solves the stochastic heat equation (SHE)
	\begin{equation} \label{eq:she}
		\partial_t\mc Z=\frac{1}{2}\partial_{xx} \mc Z+\mc Z\xi 
	\end{equation} on the time interval $[0,T]$,  with Dirichlet boundary condition, and initial condition $\mc Z_{\mathrm{ini}}$,  if for any $t\in (0,T]$, $x\in\R_+$
	\begin{equation}
		\mc Z_t(x) = \int_{\mathbb R_+}dy P_t^{\rm Dir}(x,y) \mc Z_{\mathrm{ini}}(y)  + \int_0^t ds \int_{\mathbb R_+}dy P^{\rm Dir}_{t-s}(x,y) \mc Z_s(y)\xi(s,y),
		\label{eq:Duhamelform}
	\end{equation}
	where the Dirichlet half-space heat kernel $P_t^{\rm Dir}$ is defined on $(\R_+)^2$ by 
	\begin{equation}
		P_t^{\rm Dir}(x,y)=\frac{1}{\sqrt{2\pi t}}\left(e^{-(x-y)^2/(2t)} - e^{-(x+y)^2/(2t)} \right),
		\label{eq:PtDir}
	\end{equation}
and where the second integral in \eqref{eq:Duhamelform} is in It\^o-Walsh sense.
\end{definition}

\subsubsection{Empty initial condition}

We define, for any $x\in\varepsilon^2\N$, the scaled Hopf-Cole process
\begin{equation} \label{eq:scaled1}\mathcal{Z}_t^\eps(x):=\eps^{-2} Z_{\eps^{-4}t}(\eps^{-2}x)\end{equation} and we extend $\mathcal{Z}_t^\eps(\cdot)$ to the continuous half-line $\R_+$ by linear interpolation. 

\begin{remark} Let us emphasize that the scaling in \eqref{eq:scaled1} is \emph{not} the one which will appear later when the initial condition is supposed to be \emph{near-equilibrium} (see Section~\ref{ssec:neareq}, \eqref{eq:scaled} and Definition \ref{de:near-eq-cont}), nor the one in 
	\cite{parekh2017kpz}, where the macroscopic initial condition is the delta Dirac function $\delta_0$ and the prefactor there is $\eps^{-1}$ instead of $ \eps^{-2}$.
\end{remark}

The initial condition \eqref{rem:step} implies  that 
$Z_0(k)=\mu^k,$ for any $k\in \N$. In order to identify the possible limit points of the rescaled process \eqref{eq:scaled1}, and in particular the limit initial condition when $t=0$, one needs to understand the limit, as $\eps \to 0$, of 
\begin{equation}\label{eq:init}\eps^2 \sum_{k=0}^\infty \phi_\eps(\eps^2k) \mathcal{Z}_0^\eps(\eps^2k),\end{equation} where $\phi_\eps$ is a test function, whose behaviour near the origin should reflect the boundary condition in the discrete Laplacian \eqref{eq:deltamu}. We take $\phi,\psi:\R_+\rightarrow\R$ smooth and compactly supported functions with $\phi(0)=\psi'(0)=0$ and $\psi(0)=1$, then define $\phi_\eps=\phi+\eps\phi'(0)\psi$. Using a Taylor series approximation near $0$, one can easily check that the limit of \eqref{eq:init} is $2\phi'(0)$. Therefore,
the initial condition of the continuous limit of the rescaled process $\mathcal{Z}_t^\varepsilon$ is $-2\delta_0'$ where $\delta_0'$ is the derivative of the delta Dirac distribution. \mm{In Section \ref{sec:limitZ} we will give another computation which illustrates the emergence of this initial condition, see Lemma \ref{lem:expectationinitialdata}.}

Our first result, which will be proved in Section \ref{sec:proofshe}, is the existence and uniqueness of solutions to \eqref{eq:she} for the above $\delta_0'$ initial condition and Dirichlet boundary condition:
\begin{proposition} Let $\xi$ be the standard space-time white noise on $\R_+\times\R_+$, on some probability space $(\Omega,\mathcal{F},\mathbb{P})$. There exists a $C(\mathbb R_+)$--valued process $\left(\mathcal Z_t\right)_{t\geq 0}$ which is adapted to the filtration $\mathcal F_t=\sigma\left( \lbrace \xi(s,\cdot)\rbrace_{s\leq t}\right)$, and solves the SHE \eqref{eq:she} in the sense of Definition \ref{d:she} with initial condition $\mathcal Z_{\rm ini}=-2\delta_0'$. 
	This solution is unique in the class of adapted continuous processes satisfying
	\begin{equation}
		\sup_{\substack{
				x\in \mathbb R_+ \\ s\in (0,t)
		}} \left\lbrace s^2 \mathbb E\left[ \mathcal Z_s(x)^2 \right] \right\rbrace<\infty.
		\label{eq:classoffunctions}
	\end{equation} 
	\label{prop:existenceuniqueness}
\end{proposition}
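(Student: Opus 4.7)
The plan is to construct $\mc Z$ directly as a mild (Duhamel) solution via a Picard iteration adapted to the singular initial datum $-2\delta_0'$, and to obtain uniqueness by a Gronwall argument in the weighted second-moment norm suggested by \eqref{eq:classoffunctions}.

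\medskip

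First I would compute the deterministic part of \eqref{eq:Duhamelform} explicitly. Interpreting $\delta_0'$ as a distribution against test functions vanishing at $0$ (as ensured by Dirichlet boundary condition for $P_t^{\rm Dir}$), one obtains
\[
\Psi_t(u):=\int_{\R_+} P_t^{\rm Dir}(u,v)(-2\delta_0'(v))\,dv
= 2\,\partial_v P_t^{\rm Dir}(u,v)\big|_{v=0}
=\frac{4u}{\sqrt{2\pi}\,t^{3/2}}e^{-u^2/(2t)},
\]
which is continuous in $u\in\R_+$, vanishes at $u=0$, and satisfies the two-sided estimate
$\Psi_t(u)^2\leq C\min\!\bigl(t^{-2},\, u^2 t^{-3}\bigr)$. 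Hence $\Psi$ alone lies in the class \eqref{eq:classoffunctions}, and moreover enjoys the stronger ``boundary vanishing'' bound $\Psi_t(u)^2\lesssim u^2/t^3$ near $u=0$.

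\medskip

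For existence I would define the Picard iterates $\mc Z^{(0)}_t(u):=\Psi_t(u)$ and, for $n\geq 0$,
\[
\mc Z^{(n+1)}_t(u):=\Psi_t(u)+\int_0^t\!\!\int_{\R_+} P_{t-s}^{\rm Dir}(u,v)\,\mc Z^{(n)}_s(v)\,\xi(ds,dv).
\]
The key analytic input is the factorization
$P_t^{\rm Dir}(u,v)=\tfrac{2}{\sqrt{2\pi t}}\,e^{-(u^2+v^2)/(2t)}\sinh(uv/t)$, which yields the pointwise bound $P_t^{\rm Dir}(u,v)\leq C\,\tfrac{uv}{t^{3/2}}\,e^{-(u-v)^2/(2t)}$. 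Using It\^o (Walsh) isometry together with this bound, one propagates both estimates $\mathbb E[\mc Z^{(n)}_s(v)^2]\leq C s^{-2}$ and $\mathbb E[\mc Z^{(n)}_s(v)^2]\leq C v^2 s^{-3}$ through the iteration. A direct computation of $\int_0^t\!\!\int_0^\infty P_{t-s}^{\rm Dir}(u,v)^2 (v^2 s^{-3})\,dv\,ds$ shows that the operator is a contraction, for small time, in the weighted sup-norm $\|\mc Z\|_\star:=\sup_{s\leq T,\,u}\bigl(s^2\,\E[\mc Z_s(u)^2]\bigr)^{1/2}$; iterating over consecutive small-time intervals yields a fixed point on $[0,T]$ for any $T>0$. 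Continuity of $(t,u)\mapsto\mc Z_t(u)$, and in particular the $C(\R_+)$-valuedness for $t>0$, follows from higher moment bounds (obtained via Burkholder--Davis--Gundy applied to the stochastic convolution) combined with Kolmogorov's criterion.

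\medskip

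For uniqueness, the difference $D_t(u):=\mc Z_t(u)-\mc Z'_t(u)$ of two solutions in the class \eqref{eq:classoffunctions} satisfies the homogeneous Duhamel identity with zero deterministic term. Setting $\phi(t):=\sup_{u}\,t^2\,\E[D_t(u)^2]$, It\^o isometry combined with the Dirichlet kernel estimates above produces an inequality of Gronwall type $\phi(t)\leq C\int_0^t K(t,s)\phi(s)\,ds$ with an integrable kernel $K$; iterating it shows $\phi\equiv 0$ on $[0,T]$, hence $\mc Z=\mc Z'$.

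\medskip

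The hard part, and the reason uniqueness is phrased within the class \eqref{eq:classoffunctions}, is that the exponent $2$ in $s^2\E[\mc Z_s(X)^2]$ is borderline: using only $\E[\mc Z_s(v)^2]\leq Cs^{-2}$ in the It\^o isometry produces a non-integrable factor $s^{-2}(t-s)^{-1/2}$ at $s=0$. The cancellation of this divergence requires the improved boundary bound $\E[\mc Z_s(v)^2]\lesssim v^2 s^{-3}$, which in turn relies on the factor $\sinh(uv/t)$ in the Dirichlet kernel absorbing the singularity coming from the $\delta_0'$ initial condition. Propagating this refined two-sided estimate through the Picard iteration and through the Gronwall step is the main technical obstacle.
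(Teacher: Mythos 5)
Your setup is the same as the paper's (Picard iteration on the Duhamel form with leading term $\Psi_t(u)=dP^{\rm Dir}_t(u,0)$, uniqueness by a Gronwall-type argument), and your algebraic observation $P_t^{\rm Dir}(u,v)=\tfrac{2}{\sqrt{2\pi t}}e^{-(u^2+v^2)/(2t)}\sinh(uv/t)$ is correct. However, the second-moment bound you propose to propagate through the iteration, namely $\mathbb E[\mathcal Z_s(v)^2]\lesssim \min(s^{-2},\,v^2 s^{-3})$, is too weak to close the argument, and this is a genuine gap. Plugging it into It\^o isometry forces you to estimate
\begin{equation*}
\int_0^t\!\int_{\mathbb R_+} P^{\rm Dir}_{t-s}(u,v)^2\,\min(s^{-2},v^2 s^{-3})\,dv\,ds.
\end{equation*}
The piece with $v\ge\sqrt{s}$ contributes at least
$s^{-2}\int_{\sqrt{s}}^{\infty}P^{\rm Dir}_{t-s}(u,v)^2\,dv$, and for $s\to 0$ (with $t-s\approx t$ and $u$ a fixed positive constant) that inner integral is bounded below by a positive constant depending on $u,t$ but not on $s$; thus you are left with $\int_0 s^{-2}\,ds=\infty$. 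The cancellation you hope for from $\sinh(uv/t)$ only helps when $v$ is small compared to $\sqrt{s}$, where your $v^2 s^{-3}$ bound indeed yields an integrable $s^{-1/2}$ singularity, but it does nothing in the regime $v\gtrsim\sqrt{s}$, which is where the divergence sits.

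What is missing is the Gaussian decay of $\mathbb E[\mathcal Z_s(v)^2]$ in $v$ at the diffusive scale $\sqrt{s}$. The paper propagates precisely the quantity $f_n(s)=\sup_v \mathbb E[\mathcal U_n(s,v)^2]/dP_s^{\rm Dir}(v,0)^2$, i.e.\ it keeps the full shape $dP_s^{\rm Dir}(v,0)^2=\tfrac{8}{\pi}\,\tfrac{v^2}{s^3}e^{-v^2/s}$ with the exponential factor intact, and then evaluates the exact integral
$G_t(s,u)=\int_{\mathbb R_+}P^{\rm Dir}_{t-s}(u,v)^2\,dP_s^{\rm Dir}(v,0)^2\,dv$
in closed form (Lemma~\ref{lem:asymptoticsGt}), obtaining the key bound $G_t(s,u)\le C\,\tfrac{\sqrt{t}}{\sqrt{s(t-s)}}\,dP_t^{\rm Dir}(u,0)^2$. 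That $s^{-1/2}(t-s)^{-1/2}$ kernel is what makes the Picard iteration converge. To salvage your approach you would need to upgrade your ansatz to something like $\mathbb E[\mathcal Z_s(v)^2]\le C\,v^2 s^{-3}e^{-cv^2/s}$ (or, equivalently, a multiple of $dP_s^{\rm Dir}(v,0)^2$) and show this self-improves; at that point you are essentially reproving Lemma~\ref{lem:asymptoticsGt}, for which your $\sinh$ factorization is a usable, though not obviously simpler, starting point.
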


\begin{remark}The existence and uniqueness of solutions to \eqref{eq:she} are already proved in \cite[Theorem 4.1, Prop. 4.2]{parekh2019positive} when $\mc Z_{\rm ini}$ is \emph{near-equilibrium} (see Definition \ref{de:near-eq-cont} below). This is not the case of the $\delta_0'$ initial condition that we consider here, and therefore we need to provide a new proof. Existence and uniqueness is also proved in \cite[Proposition 4.3]{parekh2017kpz} for Robin type boundary condition and $\delta_0$ initial condition. As we will see below in Section \ref{sec:proofshe}, the proof of  Proposition \ref{prop:existenceuniqueness} involves different estimates than the ones in \cite{parekh2017kpz, parekh2019positive}. \end{remark}

The main result of this paper is the following convergence:

\begin{theorem}\label{thm:conv} Fix $T>0$.
	Assume the initial particle configuration is empty as in \eqref{rem:step}. Then the rescaled process $\{\mathcal{Z}_t^\eps\}_{t\in{(0,T]}}$ converges as $\eps \to 0$ to the solution of the stochastic heat equation \eqref{eq:she} on the time interval $[0,T]$ with Dirichlet boundary condition, and  initial condition $\mc Z_{\mathrm{ini}} =-2\delta_0'$ (as defined in Definition \ref{d:she}),  in the sense of weak convergence of probability measures on the path space $D({(0,T]},C(\R_+))$ endowed with the Skorokhod topology.
\end{theorem}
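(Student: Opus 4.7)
The plan is to work with the discrete Duhamel representation of $\mathcal{Z}^\eps$ obtained by integrating the martingale decomposition of Lemma~\ref{lem:quadratic} against the semigroup associated with $\Delta^\mu$, namely
\begin{equation*}
\mathcal{Z}^\eps_t(u) \;=\; \sum_{y} p^{\eps,\mathrm{Rob}}_t(u,y)\,\mathcal{Z}^\eps_0(y) \;+\; \mathcal{M}^\eps_t(u),
\end{equation*}
where $p^{\eps,\mathrm{Rob}}$ denotes the rescaled discrete Robin heat kernel with parameter $\mu = e^{-\eps}$ analysed in Section~\ref{sec:heatkernel}, and $\mathcal{M}^\eps_t$ is the martingale integral of $p^{\eps,\mathrm{Rob}}_{t-s}(u,\cdot)$ against $dM_s(\cdot)$. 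I would then prove tightness of $\{\mathcal{Z}^\eps\}$ on $D((0,T], C(\R_+))$ and identify every subsequential limit as the unique solution provided by Proposition~\ref{prop:existenceuniqueness}.

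\textbf{Reduction to near-equilibrium via a restart.} The most natural way to import Theorem~\ref{t:cvneareq} is a two-time-scale argument. Fix a small $\delta>0$ and compare $\{\mathcal{Z}^\eps_t\}_{t\in[\delta,T]}$ with the process obtained by running the dynamics from time $\delta$ with a suitable near-equilibrium initial condition (e.g.\ a mollified, bounded approximation of $\mathcal{Z}^\eps_\delta$). Using the heat-kernel bounds of Section~\ref{sec:heatkernel} together with moment estimates, I would show that $\mathcal{Z}^\eps_\delta$ is uniformly near-equilibrium in the sense of Definition~\ref{de:near-eq-cont} and converges to the value $\mathcal{Z}_\delta$ of the continuum solution at time $\delta$. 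Theorem~\ref{t:cvneareq} then yields convergence on $[\delta, T]$; letting $\delta \downarrow 0$ and appealing to continuity of the solution (and the Skorokhod topology on $D((0,T], C(\R_+))$) gives the full statement.

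\textbf{Convergence of the deterministic term and tightness.} For the empty initial condition, $Z_0(x) = \mu^x = e^{-\eps x}$, so the deterministic piece $\sum_y p^{\eps,\mathrm{Rob}}_t(u,y)\,\eps^{-2} e^{-\eps y}$ should converge to $-2\,\partial_v P^{\mathrm{Dir}}_t(u,0) = \int P^{\mathrm{Dir}}_t(u,v)(-2\delta_0')(v)\,dv$; this is the expected macroscopic realisation of~\eqref{eq:convinitialcondition} and follows from the Robin-to-Dirichlet convergence and discrete-to-continuous Dirichlet kernel estimates developed in Section~\ref{sec:heatkernel}. Tightness is obtained from the Kolmogorov criterion applied to $\mathcal{Z}^\eps$, which in turn follows from a second moment analysis of $\mathcal{M}^\eps$ based on Lemma~\ref{lem:quadeps}. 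The usual bulk contribution $\eps^2 Z_t(x)^2$ produces a Duhamel-type integral of $(P^{\mathrm{Dir}})^2$ against $\mathbb{E}[\mathcal{Z}^\eps_s(v)^2]$; the new boundary contribution $\eps Z_t(0)\nabla^+ Z_t(0)$ is controlled using that $P^{\mathrm{Dir}}_t(u,0) = 0$, which absorbs the extra $\eps$ once one uses the sharp decay of $p^{\eps,\mathrm{Rob}}_t(u,0)$ as $\mu \to 1^-$.

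\textbf{Main obstacle.} The crux, as the authors emphasise in Section~\ref{s:comp}, is obtaining a second moment estimate on $\mathcal{Z}^\eps_t(u)$ that blows up at most like $t^{-2}$ (compatible with the uniqueness class~\eqref{eq:classoffunctions}) despite the $\eps^{-2}$-sized initial data. A naive Grönwall iteration on the Duhamel identity is not enough because the Dirichlet kernel does not produce enough decay to counter the initial divergence, and an $L^2$ bound on the initial-time deterministic term alone does not suffice to identify the $-2\delta_0'$ initial condition. I would therefore rely on the explicit contour-integral formula for the second moment of half-line ASEP from~\cite{barraquand2022markov} (via a Markov duality with two particles), asymptotically evaluated in the weakly asymmetric regime. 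This sharp estimate feeds into both the tightness argument and the identification of the initial condition in the restart step, and is where the bulk of the technical work will sit.
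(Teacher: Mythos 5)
Your proposal follows essentially the same route as the paper's proof: prove that $\mathcal{Z}^\eps_\delta$ is near-equilibrium at any positive time $\delta$ (Proposition~\ref{prop:propmomentsdeltaprime}), invoke Theorem~\ref{t:cvneareq} on $[\delta,T]$ to extract a limit point on $D((0,T],C(\R_+))$ (via Proposition~\ref{prop:tight2} and Lemma~\ref{lem:limit2}), and then identify the initial condition $-2\delta_0'$ through the exact second-moment contour integral of \cite{barraquand2022markov} (Lemma~\ref{lem:limit2bis}), which feeds into the Duhamel verification in Section~\ref{sec:conclusion}. The one presentational point worth noting is that the paper does not claim a priori that $\mathcal{Z}^\eps_\delta$ converges to ``the value $\mathcal Z_\delta$ of the continuum solution'' — that would be circular — but rather first constructs a subsequential limit via Kolmogorov's extension theorem and only afterwards checks that it satisfies the mild form \eqref{eq:Duhamelformdelta'} with $-2\delta_0'$ as the initial condition.
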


\mm{Theorem \ref{thm:conv} will be proved in Section \ref{s:deltaprime}. In particular, the proof involves an intermediate convergence result, dealing with a \emph{near-equilibrium} initial condition, which is detailed in the next paragraph.}

\subsubsection{Near-equilibrium initial condition} \label{ssec:neareq}

Let us now study a simpler case of near-equilibrium initial condition.
For that purpose, in this section we define another scaled process as follows:
\begin{equation} \label{eq:scaled}\mathscr{Z}_t^\eps(x):=\eps^2 \rZ_t(x)=Z_{\eps^{-4}t}(\eps^{-2}x), \qquad x \in \eps^2 \N,
\end{equation}
which differs from $\mathcal{Z}_t^\eps$ by a factor $\eps^2$, and we allow $Z_0(k)$ to be different from $\mu^k$. \mm{As before we extend $\mathscr{Z}_t^\eps(\cdot)$ to $\R_+$ by interpolation.}

\begin{definition}\label{de:near-eq-cont}
	We say that a sequence of random functions  $\mathscr{F}^\eps \in C(\R_+)$ is \emph{near-equilibrium} if it satisfies the following: there exists $a>0$ such that, for any $n \in \N$, any $\alpha \in {[0},\frac12)$, there exists some constant $C=C(\alpha,n)>0$ such that, for any $x,x' \in \R_+$, any $\eps >0$ \oo{small enough}, 
	\begin{align}
		\|\mathscr{F}^\eps(x)\|_n & \leqslant Ce^{ax} \label{e:momentZ0}\\ 
	\text{and} \qquad 	\|\mathscr{F}^\eps(x)-\mathscr{F}^\eps(x')\|_n & \leqslant C|x-x'|^\alpha e^{a(x+x')}, \label{e:momentdiffZ0}
	\end{align}
	where $\|G\|_n:=\E[|G|^n]^{1/n}$ denotes the $L^n$--norm with respect to the probability measure.
\end{definition}

An auxiliary -- although important -- result is the following:

\begin{theorem}\label{t:cvneareq}
	Fix $T>0$. Assume that the (possibly random) initial condition $\scZ_0 \in C(\R_+)$ is near-equilibrium (in the sense of Definition~\ref{de:near-eq-cont}), and that $\mathscr{Z}_0^\eps$ weakly converges for the standard topology on $C(\R_+)$ to some initial condition $\mathscr{Z}_{\rm ini}$ as $\eps\rightarrow 0$. 
	
	Then the rescaled process $\left(\mathscr{Z}_t^\eps\right)_{t\in[0,T]}$ converges as $\eps\rightarrow 0$ to the solution of the stochastic heat equation \eqref{eq:she}
	with initial condition $\mathscr{Z}_{\rm ini}$ in the time interval $[0,T]$ (as defined in Definition \ref{d:she}), in the sense of weak convergence of probability measures on the path space $D([0,T],C(\R_+))$ endowed with the Skorokhod topology.
\end{theorem}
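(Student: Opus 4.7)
My approach follows the Bertini--Giacomin scheme \cite{bertini1997stochastic} adapted to the half-line, in the spirit of \cite{corwin2016open,parekh2017kpz,parekh2019positive}. The essential novelty compared to those works is that the boundary parameter $\mu=e^{-\eps}$ corresponds to a Robin coefficient $A\sim -\eps^{-1}\to -\infty$ at the continuum scale, which collapses to a Dirichlet condition in the limit.

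\emph{Step 1 (Mild formulation).} By Lemma \ref{lem:quadratic}, $dZ_t(x) = \tfrac12\Delta^\mu Z_t(x)\,dt + dM_t(x)$. Letting $\p^\eps_t(x,y)$ denote the discrete semigroup with generator $\tfrac12\Delta^\mu$ on $\N$, Duhamel's principle gives
$$Z_t(x) = \sum_{y\geq 0}\p^\eps_t(x,y)Z_0(y) + \int_0^t\sum_{y\geq 0}\p^\eps_{t-s}(x,y)\,dM_s(y),$$
and after setting $P^\eps_t(u,v) := \eps^{-2}\p^\eps_{\eps^{-4}t}(\eps^{-2}u,\eps^{-2}v)$, an analogous mild equation holds for $\scZ_t$. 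Section \ref{sec:heatkernel} provides the convergence $P^\eps_t\to P^{\rm Dir}_t$ together with quantitative pointwise estimates on $P^\eps$ that I will use throughout.

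\emph{Step 2 (Tightness).} I would establish the uniform bounds
$$\|\scZ_t(u)\|_n \leq Ce^{au}, \qquad \|\scZ_t(u)-\scZ_s(v)\|_n \leq C\bigl(|u-v|^{\alpha} + |t-s|^{\alpha/2}\bigr)e^{a(u+v)}$$
for every $n\in\N$ and $\alpha\in(0,\tfrac12)$, by propagating the near-equilibrium bounds on $\scZ_0$ in time via a Gr\"onwall iteration on the second moment of the mild equation. The quadratic variation identities of Lemma \ref{lem:quadeps} are the main input: in the bulk, the dominant $\eps^2 Z_t(x)^2$ piece supplies the expected multiplicative self-interaction kernel, while the cross term $\nabla^+Z\nabla^-Z$ is handled by summation by parts against $P^\eps$; at the boundary, the new term $-\eps Z_t(0)\nabla^+Z_t(0)$ appearing in \eqref{eq:quad0} is absorbed using the vanishing of $P^\eps_t(u,v)$ as $v\to 0$ established in Section \ref{sec:heatkernel}. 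Space and time increments are treated analogously, and tightness in $D([0,T],C(\R_+))$ then follows from Kolmogorov's criterion.

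\emph{Step 3 (Identification of the limit).} Let $\mc Z$ be any subsequential limit. Convergence of the deterministic term in the mild equation uses the near-equilibrium hypothesis on $\scZ_0$ together with $P^\eps_t\to P^{\rm Dir}_t$. For the stochastic term, a martingale convergence argument applies: the predictable quadratic variation of the rescaled stochastic integral against $P^\eps$, computed from Lemma \ref{lem:quadeps}, converges to
$$\int_0^t\!\!\int_{\R_+} P^{\rm Dir}_{t-s}(u,v)^2\, \mc Z_s(v)^2\,dv\,ds,$$
exactly matching the It\^o isometry of the target integral against $\xi$. Uniqueness of the SHE with Dirichlet boundary and near-equilibrium initial condition \cite[Theorem~4.1, Prop.~4.2]{parekh2019positive} then upgrades subsequential convergence to full convergence. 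I expect the main obstacle to be the boundary contribution $-\eps Z_t(0)\nabla^+Z_t(0)$ to the quadratic variation: this term is absent in \cite{corwin2016open,parekh2017kpz} precisely because our scaling $\mu\approx 1-\eps$ is more singular than theirs, and controlling it, both for the moment estimates of Step 2 and for the noise identification in Step 3, is what makes the sharp half-line heat kernel estimates of Section \ref{sec:heatkernel} indispensable.
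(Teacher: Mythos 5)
Your plan shares the overall skeleton with the paper — tightness via iterated moment bounds on the mild equation, then identification of the limit — but the identification step diverges from what the paper actually does, and as written it contains a genuine gap.

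\paragraph{Route.} The paper does not identify the limit through the mild formulation directly. It instead uses the martingale-problem characterization (Definition~\ref{de:solution}), testing against $\phi\in\mathcal H$ (so $\phi(0)=0$) and, crucially, replacing $\phi$ by a corrected test function $\phi_\eps=\phi+\eps\phi'(0)\psi$ satisfying $\phi_\eps(0)=\eps\phi_\eps'(0)$ so that the discrete boundary term $\sqrt{q/p}\,\phi_\eps(0)-\phi_\eps(-\eps^2)$ is $o(\eps^2)$ and disappears in the limit. You propose instead to use $\p^\eps_{t-s}(x,\cdot)$ as the implicit test function in a Duhamel expansion, which automatically respects the discrete boundary condition; that is a legitimate alternative, but then the burden shifts entirely onto identifying the noise.

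\paragraph{Gap in Step 3.} Showing that the predictable quadratic variation of the rescaled stochastic integral converges to $\int_0^t\int_{\R_+}P^{\rm Dir}_{t-s}(u,v)^2\,\mathcal Z_s(v)^2\,dv\,ds$ does \emph{not} by itself identify the stochastic integral: equality of quadratic variations does not pin down a martingale. One must also verify the cross-variation/orthogonality structure of the limiting martingale measure (the condition analogous to \eqref{e:contmartpb2} in the paper) and then invoke a martingale representation theorem to construct $\xi$; this is exactly what the martingale problem formulation packages, and \cite[Prop.~4.4]{parekh2017kpz} supplies the passage back to the mild form. Your ``martingale convergence argument'' compresses this essential step away.

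\paragraph{The hard technical point is misplaced.} You single out the boundary term $-\eps Z_t(0)\nabla^+Z_t(0)$ as the main new obstacle. In the paper this is actually one of the \emph{milder} corrections: in the martingale problem it is dispatched by $\phi_\eps^2(0)=\eps^2\phi'(0)^2$ together with the moment estimates, and in Lemma~\ref{l:U} by the decay \eqref{e:hkbound4}. Your heuristic ``absorbed using the vanishing of $P^\eps_t(u,v)$ as $v\to 0$'' is not the right mechanism: for fixed $\eps>0$ the discrete kernel $\p^\eps_{t}(x,0)$ does not vanish; the single point $y=0$ simply contributes a vanishing fraction of the total sum over $y$. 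The genuinely delicate new piece is the bulk cross-gradient contribution $\nabla^+Z\,\nabla^-Z$ to the quadratic variation (the term $R'_2$ in the paper's proof). Controlling it requires the exact cancellation $\sum_y\int_0^\infty\nabla^+\p^\eps_t(x,y)\nabla^+\p^\eps_t(x',y)\,dt=\mathbf{1}_{x=x'}$ of Lemma~\ref{lem:cons} and the conditional-expectation iteration of Lemma~\ref{l:U}, following \cite{corwin2016open}. ``Summation by parts against $P^\eps$'' gestures at the right idea but does not substitute for that argument; without it, the tightness estimates alone do not make $R'_2$ vanish.

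In summary: your tightness step and choice of ingredients match the paper, but your identification step needs to be recast as (or reduced to) the martingale problem of Definition~\ref{de:solution}, and you need to supply the cancellation-plus-iteration argument for the cross-gradient term in the quadratic variation, which is the true core difficulty.
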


\mm{Theorem \ref{t:cvneareq} will be proved in Section \ref{sec:neareq}.}

\subsubsection{An example of near-equilibrium initial condition} 

Theorem \ref{t:cvneareq} may be useful independently from its \oo{relevance} to the proof of Theorem \ref{thm:conv}. Let us consider the half-line ASEP generated by $\mathcal{L}$ in the weakly asymmetric regime \eqref{eq:pq}, but with initial condition given by product Bernoulli, that is, we assume that the variables $\sigma_0(k), k \in \N^*$, are independent Bernoulli variables with parameter $\varrho$.  Let us scale $\varrho =( 1-\eps (B+\frac12))/2$, so that $\eps h_0(\eps^{-2}x)$ converges to a Brownian motion with drift $-(B+1/2)$.
Then, it can be shown that $\mathscr{Z}_0^\eps$ is near-equilibrium\footnote{The term near-equilibrium actually comes from the fact that such initial condition is stationary for the ASEP on $\mathbb Z$.}, and thus Theorem \ref{t:cvneareq} can be applied, to find that $(\mathscr{Z}_t^\eps)$ converges to the SHE with Dirichlet boundary  condition, and initial condition given by the exponential of a Brownian motion with drift $-(B+1/2)$. 

Denoting by $\mathscr Z(t,x)$ this solution, \cite{parekh2019positive} showed that we have the identity in distribution 
\begin{equation}
	\lim_{x\to 0} \frac{\mathscr Z(t,x)}{x} = \widetilde{\mathscr Z}(t,0)
	\label{eq:symmetry}
\end{equation}
where $\widetilde{\mathscr Z}$ is the solution of the SHE on $\mathbb R_+$ with Robin boundary parameter $B$ and delta Dirac function  as initial condition. In the special case $B=-1/2$, that is $\varrho=1/2$, the law of $\widetilde{\mathscr Z}(t,0)$ is explicitly known and related to eigenvalue GOE (Gaussian Orthogonal Ensemble) statistics   \cite{barraquand2018stochastic, parekh2017kpz}. For other values of $B$, the law of $\widetilde{\mathscr Z}(t,0)$ was computed recently \cite{imamura2022solvable} (see also \cite{krajenbrink2020replica}).

\begin{remark}
	Discrete analogues of the identity \eqref{eq:symmetry}, allowing to exchange the roles of the boundary and initial condition parameters, also exist for directed polymers \cite[Prop. 8.1]{barraquand2018half}, last passage percolation \cite[Lemma 6.1]{baik2018pfaffian} or more general models defined through Pfaffian Schur measures \cite[Corollary 7.6]{baik2001algebraic}. This suggests to compare the height function at the origin for ASEP on $\mathbb N^*$  in the following two situations:
	\begin{enumerate}
		\item The reservoir has injection rate $p$, ejection rate $0$, and the initial condition is i.i.d. Bernoulli with parameter $\varrho$; 
		\item The reservoir has injection parameter $\alpha=p\varrho$, ejection parameter $ \gamma=q(1-\varrho)$, and the initial configuration is empty.
	\end{enumerate}
	When $p=1, q=0$, that is in the case of TASEP, the height functions at the origin in the above two situation have the same distribution\footnote{More precisely, one needs to consider the TASEP on $\mathbb N^*$ with an initial condition such that the site $1$ is occupied, and all other sites are occupied according to Bernoulli i.i.d.~random variables. Then, the height function for this model has the same distribution \cite{prahofer2001current} as the height function associated to the last passage percolation model with boundary considered in \cite{baik2018pfaffian}. The symmetry between the boundary and the initial condition parameters follows from \cite[Lemma 6.1]{baik2018pfaffian} (choosing the parameter $\alpha$ there as $\alpha=\varrho$).}. 
	Whether this equality in law generalizes to ASEP is an open question.  
\end{remark}

\subsection{Strategy of the proof}
\label{sec:strategy}
As stated in Definition \ref{d:she}, a mild solution $\mathcal Z_t(x) $ to the stochastic heat equation \eqref{eq:she} with Dirichlet boundary condition satisfies \eqref{eq:Duhamelform}. Solutions can equivalently be characterized by the following martingale problem. The equivalence of the two notions of solutions is proved in {\cite[Prop. 5.6]{parekh2017kpz}} in the case of Neumann type boundary condition. The argument applies mutatis mutandis in the Dirichlet case. 

Let us define the space of test functions
\begin{equation} \label{eq:testfunction}
		\mathcal{H}:=\big\{\phi\in C_c^\infty(\R)\ \colon\ \phi(0)=0\big\},
	\end{equation}
where $C_c^\infty(\R)$ is the set of smooth functions defined on $\R$, with compact support. We denote by $(f,g):=\int_{\R_+} f(x)g(x) d x$ the standard Euclidean product in $L^2(\R)$.
\begin{definition}\label{de:solution} A solution \oo{$\mathscr Z=(\mathscr Z_t(x))_{t\in[0,T],x\in\R_+}$} to the martingale problem for the stochastic heat equation
	\eqref{eq:she} with Dirichlet boundary condition, initial condition $\mathscr{Z}_{\mathrm{ini}}$ and time interval $[0,T]$,  is {a random element of $C([0,T],C(\R_+))$}, such that for all $\phi\in\mc H$ and $t\in(0,T]$,  the following quantities are martingales:
	\begin{align}
		&N_t(\phi):=(\mathscr{Z}_t,\phi)-(\mathscr{Z}_{\rm ini},\phi)-\frac{1}{2}\int_0^t(\mathscr{Z}_s,\phi'')ds,\label{e:contmartpb}\\
		&Q_t(\phi):=(N_t(\phi))^2-\int_0^t(\mathscr{Z}_s^2,\phi^2)ds.\label{e:contmartpb2}
	\end{align}
	 \mm{This notion of solution is equivalent to the one given in Definition \ref{d:she}.}
\end{definition}

We divide the proof of Theorem \ref{thm:conv} (Section \ref{s:deltaprime}) and Theorem \ref{t:cvneareq} (Section \ref{sec:neareq}) into several steps, following the strategy of \cite{parekh2017kpz}: 
\begin{enumerate}
	\item For a process $(\mathscr{Z}_t^\eps)_{t\in[0,T]}$ starting from a near-equilibrium initial condition   $\mathscr{Z}_0^\eps$, which satisfies the weak convergence $\mathscr{Z}_0^\eps \Rightarrow \mathscr{Z}_{\rm ini}$  as $\eps\to 0$,   we prove its convergence  towards the solution to the heat equation with Dirichlet boundary condition starting from $\mathscr{Z}_{\rm ini}$  (a.k.a.~Theorem~\ref{t:cvneareq}). This is split in two steps: proof of tightness (Section~\ref{s:tightnessneareq}), and identification of the limit point (Section \ref{s:limitpoints}). The latter step uses the martingale problem above: we show that the discrete martingale problem gives the continuous one in the limit, and the control of the error terms is a consequence of the tightness estimates. 
	
	\item We come back to the initial condition $Z_0(k)=\mu^k$ of Theorem~\ref{thm:conv}. We prove that at time $\delta>0$, $\mathcal{Z}^\eps_\delta$  defined in \eqref{eq:scaled1} is \emph{near-equilibrium} in the sense of Definition \ref{de:near-eq-cont}. In order to prove this result, we first establish in Proposition~\ref{p:momentZ} that $\mathcal{Z}^\eps_\delta$ has bounded moments of any order, using exact integral formulas (Section~\ref{subs:momentsZ}). Then, using Duhamel's formula, Hölder properties for the heat kernels (Lemma~\ref{l:Holder-heat-kernel} and Lemma~\ref{l:integralF}), we deduce the Hölder properties for $\mathcal Z^\eps_\delta$ (Proposition~\ref{prop:HolderZdelta'}). This is the purpose of Section \ref{subs:neareq}. Similarly to the first point, this property gives us tightness in $D([\delta,T],C(\R_+))$ for any $\delta \in (0,T)$, and that any limit point is solution to \eqref{eq:she} with Dirichlet boundary condition.
	
	\item The missing point is to push $\delta$ to $0$. We use consistency, and identify the initial condition. The structure of the argument is similar to \cite{parekh2017kpz} but we use a different method in order to obtain second moment bounds (Lemma \ref{lem:limit2bis}). We rely on the analysis of exact integral formulas for the moments of $Z_t(k)$ obtained using a Markov  duality stated in \cite{barraquand2022markov}. 
	
\end{enumerate}


\section{Existence and uniqueness of the macroscopic solution} 
\label{sec:proofshe}  
This section is devoted to the proof of existence and uniqueness of the solution to  the SHE (Definition \ref{d:she}) with Dirichlet boundary condition and initial condition $\mathcal Z_{\rm ini}=-2\delta_0'$, namely Proposition \ref{prop:existenceuniqueness}. Although we follow a standard argument from \cite{walsh1986introduction} of Picard iteration (see also \cite[Section 4]{parekh2017kpz}), the initial condition that we consider is much more singular than the one considered in previous works, and this requires us to prove refined estimates. 

It will be convenient to introduce the notation \oo(recall that $P_t^{\rm Dir}$ is defined in \eqref{eq:PtDir}) : for any $x \in \R_+$, $t> 0$,
\begin{equation}
	dP^{\rm Dir}_t(x,0):= - P_t^{\rm Dir}(x,\cdot)\ast 2\delta_0'(\cdot)  =  2\partial_y P_t^{\rm Dir}(x,y)\big\vert_{y=0} = \frac{2\sqrt{\frac{2}{\pi }} x e^{-\frac{x^2}{2 t}}}{t^{3/2}}.
	\label{eq:defdP}
\end{equation}
We start with an estimate involving the quantity $dP^{\rm Dir}_t(x,0)$, which we will re-use in Sections \ref{sec:limitZ} and \ref{sec:conclusion}. 
\begin{lemma}
	Define, for any $s\in(0,t)$ and $x\in\R_+$,  
	\begin{equation}
		G_{t}(s,x): = \int_{\mathbb R_+} dy \left( P^{\rm Dir}_{t-s}(x,y) \right)^2 \left( dP_s^{\rm Dir}(y,0)\right)^2.
		\label{eq:defGt}
	\end{equation}
	There exists a constant $C>0$ such that for any $t>0$, any $s\in(0,t)$ and any $x\in \mathbb R_+$, 
	\begin{equation}
		G_{t}(s, x) \leq C \frac{\sqrt{t}}{\sqrt{s(t-s)}} \left(dP^{\rm Dir}_t(x,0)\right)^2.
		\label{eq:boundGt}
	\end{equation}
	\label{lem:asymptoticsGt}
\end{lemma}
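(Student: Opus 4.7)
The plan is to compute $G_t(s,u)$ nearly explicitly, exploiting the factorization
\[ P_t^{\rm Dir}(u,v) = \sqrt{\tfrac{2}{\pi t}}\, e^{-(u^2+v^2)/(2t)} \sinh(uv/t), \]
obtained by writing $e^{-(u-v)^2/(2t)} - e^{-(u+v)^2/(2t)} = e^{-(u^2+v^2)/(2t)}(e^{uv/t} - e^{-uv/t})$ and recognizing a $\sinh$. Squaring yields $(P^{\rm Dir}_{t-s}(u,v))^2 = \tfrac{2}{\pi(t-s)}\, e^{-(u^2+v^2)/(t-s)} \sinh^2(uv/(t-s))$, while from \eqref{eq:defdP}, $(dP^{\rm Dir}_s(v,0))^2 = \tfrac{8}{\pi s^3}\, v^2 e^{-v^2/s}$. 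Inserting both into \eqref{eq:defGt} reduces $G_t(s,u)$ to
\[ \tfrac{16\, e^{-u^2/(t-s)}}{\pi^2 (t-s) s^3}\int_0^\infty v^2 e^{-Av^2}\sinh^2\!\Big(\tfrac{uv}{t-s}\Big)\, dv, \qquad A := \tfrac{1}{s}+\tfrac{1}{t-s} = \tfrac{t}{s(t-s)}. \]

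I then expand $\sinh^2(x) = \tfrac14(e^{2x}+e^{-2x}) - \tfrac12$ and complete the square in each Gaussian exponent. Setting $c := us/t$ and $D_1 := u^2 s/(t(t-s))$, the two shifted pieces take the form $e^{D_1}\int_0^\infty v^2 e^{-A(v\mp c)^2}\,dv$; a change of variable $v \mapsto -v$ in the $(v+c)^2$ integral combines both half-line integrals into a single full-line Gaussian integral $\int_{\mathbb R} (w+c)^2 e^{-Aw^2}\, dw = \sqrt{\pi/A}\bigl(\tfrac{1}{2A} + c^2\bigr)$, while the last piece is the standard $\int_0^\infty v^2 e^{-Av^2}dv = \tfrac{\sqrt{\pi}}{4A^{3/2}}$. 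Using the clean simplification $-\tfrac{u^2}{t-s}+D_1 = -\tfrac{u^2}{t}$ to align the exponential prefactors, one arrives at the exact closed form
\[ G_t(s,u) = \tfrac{4}{\pi^{3/2}(t-s)s^3\sqrt{A}} \left[ \tfrac{e^{-u^2/t} - e^{-u^2/(t-s)}}{2A} + \tfrac{u^2 s^2}{t^2}\, e^{-u^2/t} \right]. \]

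The final step is to produce a factor $u^2 e^{-u^2/t}$ in the first bracket term. Since $\tfrac{1}{t-s} \geq \tfrac{1}{t}$, the elementary inequality $1 - e^{-x} \leq x$ gives
\[ 0 \leq e^{-u^2/t} - e^{-u^2/(t-s)} = e^{-u^2/t}\bigl(1 - e^{-u^2 s/(t(t-s))}\bigr) \leq e^{-u^2/t}\cdot \tfrac{u^2 s}{t(t-s)}. \]
Plugging this back, substituting $1/A = s(t-s)/t$, and collecting prefactors, both bracket terms become multiples of $u^2 s^2 e^{-u^2/t}/t^2$, yielding $G_t(s,u) \leq \tfrac{6}{\pi^{3/2}}\cdot u^2 e^{-u^2/t}/(t^{5/2}\sqrt{s(t-s)}) = \tfrac{3}{4\sqrt{\pi}}\, \tfrac{\sqrt{t}}{\sqrt{s(t-s)}} dP^{\rm Dir}_t(u,0)^2$, proving \eqref{eq:boundGt} with $C = 3/(4\sqrt{\pi})$. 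The main subtlety is that coarser bounds---replacing $(P^{\rm Dir}_{t-s})^2$ by the square of the full-line heat kernel, or using $1-e^{-2uv/(t-s)} \leq 2uv/(t-s)$ inside the square---either lose the vanishing of $P^{\rm Dir}$ at the boundary or introduce uncontrolled extra powers of $u$, failing to reproduce the precise $u^2 e^{-u^2/t}$ behavior of $dP^{\rm Dir}_t(u,0)^2$; keeping the $\sinh$ representation throughout the explicit integration is what makes the estimate sharp.
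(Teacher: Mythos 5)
Your proof is correct and follows essentially the same route as the paper: compute the Gaussian integral in \eqref{eq:defGt} explicitly to obtain a closed form for $G_t(s,u)$, then apply $1-e^{-x}\leq x$ and compare with $dP_t^{\rm Dir}(u,0)^2$, arriving at the same constant $C = 3/(4\sqrt{\pi})$. (Incidentally, your explicit computation confirms the paper's formula for the ratio $G_t(s,u)/dP_t^{\rm Dir}(u,0)^2$ but reveals that the paper's intermediate display for $G_t(s,u)$ itself carries an extraneous $u^2$ in the denominator; the final bound is unaffected.)
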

\begin{proof}
	Using the explicit expressions for $P^{\rm Dir}_{t-s}(x,y)$ from \eqref{eq:PtDir} and $dP_s^{\rm Dir}(y,0)$ in \eqref{eq:defdP}, we can compute the integral \eqref{eq:defGt} and obtain 
	$$ G_t(s, x) = \frac{2e^{\frac{-x^2}{t}} \Big(\frac{t(t-s)}{s}\Big( 1-e^{\frac{-sx^2}{t(t-s)}}\Big)+2x^2 \Big) }{\pi^{3/2}t^{5/2}\sqrt{s(t-s)}}.$$ 
	Dividing by $(dP^{\rm Dir}_t(x,0))^2$, we get 
	$$ \frac{G_t(s, x)}{(dP^{\rm Dir}_t(x,0))^2}  = \frac{\sqrt{t}\Big(\frac{t(t-s)}{s}\Big( 1-e^{\frac{-sx^2}{t(t-s)}}\Big)+2x^2\Big)}{4\sqrt{\pi}\sqrt{s(t-s)}}.$$
	Using the bound $1-e^{-x}\leq x$, and simplifying the resulting expression,  we obtain that 
	$$ \frac{G_t(s, x)}{(dP^{\rm Dir}_t(x,0))^2}  \leq  \frac{3}{4\sqrt{\pi}} \frac{\sqrt{t}}{\sqrt{s(t-s)}}.$$
\end{proof}

Let us now turn to the proof of Proposition \ref{prop:existenceuniqueness}.

\begin{proof}[Proof of Proposition \ref{prop:existenceuniqueness}]
	Fix a terminal time $T>0$ and consider the Banach space $\mathcal B$ of adapted processes $(\mathcal{Z}_t)$ satisfying 
	\[ \Vert \mathcal{Z} \Vert_{\mathcal B}^2 := \sup_{\substack{
			x\in \mathbb R_+ \\ t\in (0,T)
	}} \left\lbrace t^2\; \mathbb E\left[ \mathcal Z_t(x)^2 \right] \right\rbrace <\infty.\]
	We define a sequence of processes defined for $t\leq T, x\in \mathbb R_+$, by 
	\begin{align*} 
		\mathcal{U}_0(t,x)  &:= dP^{\rm Dir}_t(x,0).\\
		\mathcal{U}_{n+1}(t,x) &:= \int_0^t \int_{\mathbb R_+} P^{\rm Dir}_{t-s}(x,y)\mathcal{U}_n(s,y)\xi(s,y)dyds.
	\end{align*}
	This implies that, if $S_n=\sum_{k=0}^n \mathcal{U}_n$,  then
	\[ S_{n+1}(t,x) = dP_t^{\rm Dir}(x,0)+ \int_0^t \int_{\mathbb R_+} P^{\rm Dir}_{t-s}(x,y)S_n(s,y)\xi(s,y)dyds.\]
	In order to show the existence of the solution to {\eqref{eq:Duhamelform} with $\mathcal Z_{\rm ini}=-2\delta_0'$}, it suffices to show that the series $S_n$ converges in the space $\mathcal B$, and for that we will show that $\sum \Vert \mathcal{U}_n\Vert_{\mathcal B}$ converges. Regarding uniqueness, it follows from the same argument as in \cite[Proposition 4.2]{parekh2017kpz}.
	In order to estimate $\Vert \mathcal{U}_n\Vert_{\mathcal B}$,  we introduce 
	\[ f_n(s) =  \sup_{r\in[0,s],y \in \R_+
	} \left\lbrace \frac{\mathbb E\left[ \mathcal{U}_n(r,y)^2 \right]}{dP_r^{\rm Dir}(y,0)^2}\right\rbrace.\]
	By It\^o isometry, we have, for any $t\in[0,T]$
	\[ \mathbb E\left[ \mathcal{U}_{n+1}(t,x)^2 \right] = \int_0^t ds  \int_{\mathbb R_+} dy \left( P^{\rm Dir}_{t-s}(x,y) \right)^2 \mathbb E \left[ \mathcal{U}_n(s,y)^2\right].\]
	Thus, we may write, recalling the definition of $G_t(s, x)$ in \eqref{eq:defGt}, 	\begin{align*}
		\mathbb E\left[ \mathcal{U}_{n+1}(t,x)^2 \right] &\leq  \int_0^t ds \int_{\mathbb R_+}dy  \left( P^{\rm Dir}_{t-s}(x,y) \right)^2  (dP_s^{\rm Dir}(y,0))^2   f_n(s) \\
		&\leq \int_0^t ds\, G_t(s, x) f_n(s).
	\end{align*}   
	Using Lemma \ref{lem:asymptoticsGt}, we obtain that 
	\begin{equation}
		\mathbb E\left[ \mathcal{U}_{n+1}(t,x)^2 \right]  \leq  C \left( dP_t^{\rm Dir}(x,0)\right)^2 \sqrt{t} \int_{0}^t ds\frac{f_n(s)}{\sqrt{s(t-s)}}.
		\label{eq:recurrencebound}
	\end{equation} 
	Dividing both sides of $\eqref{eq:recurrencebound}$ by $( dP_t^{\rm Dir}(x,0))^2$,  we obtain that for $t\in {[0,T]}$, 
	\[ f_{n+1}(t) \leq C \sup_{r\leq t}\sqrt{r}\int_0^r ds \frac{f_n(s)}{\sqrt{s(r-s)}}.\]
	By a simple change of variable and using the monotonicity of $f_n$, we get that for any $r\in[0,t]$,
	\begin{align*}
	 \int_0^r ds \frac{f_n(s)}{\sqrt{s(r-s)}}=\int_0^1 du\frac{f_n(ru)}{\sqrt{u(1-u)}}
	 \leq \int_0^1 du\frac{f_n(tu)}{\sqrt{u(1-u)}}
	 =\int_0^t ds \frac{f_n(t)}{\sqrt{s(t-s)}}.
	\end{align*}
	We conclude that 
	\begin{equation}
	 f_{n+1}(t) \leq C \sqrt{t}\int_0^tds \frac{f_n(s)}{\sqrt{s(t-s)}}.
	\end{equation}
	Iterating this inequality, we get 
	\begin{align*}
		f_{n+2}(t) & \leq C\sqrt t \int_0^t dr \sqrt{r} \int_0^r ds  \frac{f_n(s)}{\sqrt{r(t-r)}\sqrt{s(r-s)}} \\
		& = C\sqrt t \int_0^t ds \frac{f_n(s)}{\sqrt{s}} \int_s^t \frac{dr}{\sqrt{t-r}\sqrt{r-s}} 
		 = C \sqrt{t} \int_0^t ds \frac{f_n(s)}{\sqrt{s}},
	\end{align*} by exchanging the integration order. We deduce by induction that $f_n(t) \leq C^n t^{n/2}/(\lfloor n/2 \rfloor)!$. 	
	Hence, we have obtained that 
	\[ \Vert \mathcal{U}_n\Vert_{\mathcal B}^2 \leq C  \sup_{\substack{
			x\in \mathbb R_+ \\ s\in (0,T)
	}} \left\lbrace \frac{ \mathbb E\left[ \mathcal{U}_n(s,x)^2 \right]}{dP_s^{\rm Dir}(x,0)^2} \right\rbrace ={C}f_n(T) \leq   \frac{{C^{n+1}} T^{n/2}}{(\lfloor n/2 \rfloor)!},\]
	where in the first inequality we have simply used that $dP_s^{\rm Dir}(x,0)\leq C s^{-1}$ (which is easy to check using the explicit expression \eqref{eq:defdP}).  This shows that $\sum_{n=0}^{+\infty} \Vert \mathcal{U}_n \Vert_{\mathcal B}<\infty$ so that the series $\sum_{n=0}^{+\infty} \mathcal{U}_n $ exists in the space $\mathcal B$, and it concludes the proof. 
\end{proof}

\section{Intermediate technical results} 
\label{sec:heatkernel}

In both proofs of Theorems \ref{thm:conv} and \ref{t:cvneareq}, we will need several estimates which we collect in this section. We start by defining the discrete heat kernel with Robin boundary condition, denoted below by $\p_t^R$, and $\p_t^\eps$ when the boundary condition diverges. In Section \ref{ssec:heat1}, we collect the usual decay estimates on $\p^\eps$ and provide a new proof of the ``key cancellation'' property. In Section \ref{sec:moment} we use exact computations to give a long time estimate for $\p^R$ and a Hölder estimate for the convolution of $\p^\eps $ with the empty initial condition. In Section \ref{ssec:holder} we prove a H\"older property for $\p^\eps$ via random walk estimates. In Section \ref{ssec:killing}, we bound the killing probability via exact computations. Finally in Section \ref{ssec:useful} we collect two essential technical lemmas which deal with the microscopic Hopf-Cole process $Z_t(k)$.

\begin{definition}\label{d:heatkernel}
	Let $\mu<1$. The discrete heat kernel with Robin boundary condition (and parameter $\mu$), is defined as the solution to the following: for any $k,\ell \in \N$
	\begin{equation*}
		\partial_t  \p_t^R(k,\ell) = \frac 1 2 \Delta_\ell  \p^R_t(k,\ell), \qquad\p_0^R(k,\ell) = \mathbf{1}_{k=\ell}, \qquad \p_t^R(-1,\ell)=\mu\; \p_t^R(0,\ell),
	\end{equation*} 
	where $\Delta_\ell$ denotes the discrete Laplacian acting on functions of the variable $\ell$. 
\end{definition}

Since we assume $\mu<1$, $\p_t^R(k,\ell)$ corresponds to the transition probability for a continuous time random walk on $\N$ which behaves as the symmetric simple random walk on positive integers, while at $0$, after an exponentially distributed waiting time with mean $1$, it jumps to $1$ with probability $\frac 1 2$, it stays at $0$ with probability $\frac{\mu}{2}$, and it is killed with probability $\frac{1-\mu}{2}$. This gives us a representation of $\p^R$ in terms of the transition probabilities $(\p_n)$ of the underlying discrete random walk (which moves similarly but at integer times):
\begin{equation}\label{e:representationdiscretetime}
	\p^R_t(k,\ell)=\sum_{n=0}^\infty e^{-t}\frac{t^n}{n!}\p_n(k,\ell).
\end{equation}
Moreover, we have the following representation for $\p_t^R$, in terms of the kernel $p_t$ for the continuous time symmetric simple random walk on $\Z$, see \cite[Section 4.1]{corwin2016open}:
\begin{equation}
	\p_t^R(k,\ell) = p_t(k-\ell)+ \mu p_t(k+\ell+1)+ (1-\mu^{-2}) \sum_{j=2}^{+\infty} \mu^j p_t(k+\ell+j).
	\label{eq:imagesmethodrepresentation}
\end{equation} 
In particular, $\p_t^R(k,\ell)$ is increasing in $\mu$.

\begin{definition}\label{d:epsheatkernel} Let
	$\{\p^\eps_t(k,\ell)\}_{k,\ell\in\N}$ denote the discrete heat kernel with Robin boundary condition and parameter $\mu=e^{-\eps}$.
\end{definition}

\subsection{First properties on the heat kernel} \label{ssec:heat1}

\begin{lemma}[Heat kernel bounds] \label{lem:bounds} Fix $T>0$. We have the following estimates. \begin{enumerate}
		\item[(i)]  For any $a_1 \ge 0$, $a_2 \ge 0$, there exists $C=C(\oo{a_1,a_2},T)\in\R_+$ such that, for all $t\leq \eps^{-4}T$, $k\in\N$, 
		\begin{align}
			\sum_{\ell=0}^\infty \p_t^\eps(k,\ell){e^{a_1|k-\ell|(1\wedge t^{-1/2})}e^{a_2\eps^2 \ell}}\leq Ce^{{a_2}\eps^2 k}\label{e:hkbound1}.
		\end{align}
		\item[(ii)] There exists $C=C(T)\in\R_+$ such that,  for all $t\leq \eps^{-4}T$, $k,\ell,j\in\N$, for all $v\in[0,1]$, 	
		\begin{equation}\label{e:hkbound2}
			|\p_t^\eps(k,j)-\p_t^\eps(\ell,j)|\leq C\left(1\wedge\frac{1}{\sqrt{t^{1+v}}}\right)|k-\ell|^v.
		\end{equation}
		Moreover, for all $b>0$, there exists $C=C(T,b)\in\R_+$ such that for all $t\leq \eps^{-4}T$, $k,\ell\in\N$, for all $v\in[0,1]$,
		\begin{equation}\label{e:hkbound4}
			|\nabla^\pm\p_t^\eps(k,\ell)|\leq C\left(1\wedge\frac{1}{\sqrt{t^{1+v}}}\right)e^{-b|k-\ell|(1\wedge t^{-1/2})},
		\end{equation} 
		and consequently, for all $a\geq 0$, there exists $C=C(T,a)\in\R_+$ such that, for all $k\in\N$,
		\begin{equation}\label{e:hkbound5}
			\sum_{\ell=0}^\infty \left|\nabla^\pm\p_t^\eps(k,\ell)\right|e^{a\eps^2 \ell}e^{a|k-\ell|(1\wedge t^{-1/2})}\leq Ce^{a\eps^2k}t^{-1/2}.
		\end{equation}
		\item[(iii)] For all $t\geq s\geq 0$, $k,\ell\in\N$,
		\begin{equation}\label{e:hkbound3}
			\p_s^\eps(k,\ell)\leq e^{t-s}\p_t^\eps(k,\ell).
		\end{equation}
	\end{enumerate}

\end{lemma}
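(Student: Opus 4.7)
My plan is to treat the three parts of the lemma separately, in order of increasing difficulty.

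Part (iii) is immediate from the Poissonization \eqref{e:representationdiscretetime}: writing
$e^s\p_s^\eps(x,y) = \sum_{n\geq 0}\tfrac{s^n}{n!}\p_n(x,y)$
and noting $\p_n(x,y)\geq 0$ for every $n$, the right-hand side is nondecreasing in $s$, so $e^s\p_s^\eps(x,y)\leq e^t\p_t^\eps(x,y)$, which rearranges to \eqref{e:hkbound3}.

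Part (i) uses the images representation \eqref{eq:imagesmethodrepresentation}. Since $\mu = e^{-\eps}\in(0,1)$ gives $1-\mu^{-2}<0$, the tail sum is nonpositive and we may bound $\p_t^\eps(x,y)\leq p_t(x-y)+\mu\, p_t(x+y+1)$, where $p_t$ is the continuous-time simple random walk kernel on $\Z$. The exponential moment identity $\sum_{z\in\Z}p_t(z)e^{\theta z}=\exp(t(\cosh\theta-1))$ yields a uniform bound $\leq \exp(Ta^2)$ for $\theta=\pm a\eps^2$ and $t\leq\eps^{-4}T$. Extending the first sum from $y\geq 0$ to $y\in\Z$ and performing the change of variables $w=x+y+1$ in the second then produces \eqref{e:hkbound1}.

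Part (ii) is the technical heart. Starting from Gaussian gradient estimates for $p_t$ on $\Z$, namely $|\nabla p_t(z)|\leq C t^{-1}e^{-bz^2/t}$ and $|p_t(z)|\leq Ct^{-1/2}e^{-bz^2/t}$, interpolation yields, for $v\in[0,1]$ and $|k|\leq \sqrt t$,
\[|p_t(z+k)-p_t(z)|\leq C(1\wedge t^{-(1+v)/2})|k|^v e^{-bz^2/t}.\]
I would apply this to each of the three summands in \eqref{eq:imagesmethodrepresentation} with $z=\pm x,\pm y$ shifted appropriately. The first two terms give the claimed bound directly. For the tail $(1-\mu^{-2})\sum_{w\geq 2}\mu^w p_t(\,\cdot\,+w)$, the algebraic identity $(1-\mu^{-2})\sum_{w\geq 2}\mu^w = -(1+\mu)$ shows that the prefactor and the geometric sum collapse to a bounded constant, so this contribution is also $O((1\wedge t^{-(1+v)/2})|x-y|^v)$. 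This yields \eqref{e:hkbound2}. For the stronger bound \eqref{e:hkbound4}, use that $x,z\geq 0$ implies $x+z+w\geq x+z\geq |x-z|$, so the Gaussian decay in the shifted argument controls decay in $|x-z|$, and Gaussian decay implies the weaker exponential form $e^{-b|x-z|(1\wedge t^{-1/2})}$ claimed. Finally, \eqref{e:hkbound5} follows by integrating \eqref{e:hkbound4} against the weight $e^{a\eps^2 y}e^{a|x-y|(1\wedge t^{-1/2})}$ in $y$, choosing $b$ in \eqref{e:hkbound4} strictly larger than $a$ so that the exponential decay wins.

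I expect the main obstacle to be the bookkeeping in part (ii), where one must juggle three different scales: Gaussian decay at scale $\sqrt t$, the weight $e^{a\eps^2 y}$ whose natural scale is $\eps^{-2}$, and the geometric tail in \eqref{eq:imagesmethodrepresentation} whose scale is $\eps^{-1}$. The algebraic identity $(1-\mu^{-2})\sum_{w\geq 2}\mu^w=-(1+\mu)$ is crucial in avoiding the apparent blow-up of the geometric sum for $\mu$ close to $1$, and verifying that all these estimates remain uniform for $t$ as large as $\eps^{-4}T$ requires care throughout.
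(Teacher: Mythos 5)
Your proposal is correct, and it replaces the paper's citation-based shortcut with a fully self-contained argument. The paper's own proof is essentially a reduction to the estimates of \cite{parekh2017kpz}: since the Robin heat kernel is monotone increasing in $\mu$ (as is visible from \eqref{eq:imagesmethodrepresentation}), the bounds proved there for $\mu=1-A\eps^2$ apply to $\mu=e^{-\eps}$ as upper bounds, and the paper simply points to Corollary 3.3 and Proposition 3.2 of that reference. You instead reconstruct the estimates from scratch: part (iii) via the same Poissonization argument, part (i) via the exponential moment formula for the free walk combined with the observation that the image-charge tail is nonpositive (since $1-\mu^{-2}<0$), and part (ii) via Gaussian/interpolation estimates for $p_t$ applied termwise to \eqref{eq:imagesmethodrepresentation}. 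Your algebraic identity $(1-\mu^{-2})\sum_{w\ge 2}\mu^w=-(1+\mu)$ is a clean substitute for the paper's monotonicity trick: it explains directly why the apparently divergent geometric tail (of characteristic length $\eps^{-1}$, much shorter than the diffusive scale $\sqrt t\sim\eps^{-2}$) contributes only a bounded constant, without appealing to an external reference. The trade-off is that your route obliges you to state and justify the discrete Gaussian estimates $|p_t(z)|\le C(1\wedge t^{-1/2})e^{-bz^2/t}$ and $|\nabla p_t(z)|\le C(1\wedge t^{-1})e^{-bz^2/t}$ (which you use without proof, and which need the $1\wedge$ refinements for small $t$) and to track carefully how Gaussian decay on the image arguments $x+z+w\ge |x-z|$ degrades to the exponential form $e^{-b|x-z|(1\wedge t^{-1/2})}$ of \eqref{e:hkbound4}; as you note, this requires adjusting constants in the regime $1\le |x-z|\le\sqrt t$, where both sides are merely $O(1)$. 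These are details that the paper delegates to \cite{parekh2017kpz}, so your version, if fully written out, would be longer but would make Lemma~\ref{lem:bounds} independent of external sources.
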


\begin{proof}
	A number of these bounds follow from those established for instance in \cite{parekh2017kpz}. Therein, the author considers the discrete heat kernel with Robin boundary condition with parameter $\mu_A=1-A\eps^2$ (note that $\eps$ for us corresponds to $\sqrt{\eps}$ therein). In particular, the monotonicity in $\mu$ of $\p^R$ allows us to use directly the bounds established in \cite{parekh2017kpz}. More precisely: 
	\begin{itemize}
		\item 
		\eqref{e:hkbound1} follows from Corollary 3.3 in \cite{parekh2017kpz};
		\item 
		\eqref{e:hkbound2} and \eqref{e:hkbound4} follow from similar bounds that hold for the standard heat kernel $p_t$ on the whole line $\Z$, and from \eqref{eq:imagesmethodrepresentation}, as in the proof of Proposition 3.2 in \cite{parekh2017kpz}. In fact, the monotonicity in $\mu$ of \eqref{eq:imagesmethodrepresentation} implies a monotonicity in the upper bounds used in the proof of \cite{parekh2017kpz}, so that we can use the estimates therein as an upper bound;

		\item \eqref{e:hkbound3} is an immediate consequence of \eqref{e:representationdiscretetime}. \end{itemize}
\end{proof}

\begin{lemma}[Key cancellation and consequences] \label{lem:cons} We have the following estimates:
	\begin{enumerate}
		\item[(i)] For any $k,\ell\in\N$,
		\begin{equation}\label{e:keycancel}
			\sum_{j=0}^\infty\int_0^\infty \nabla^+\p^\eps_t(k,j)\nabla^+\p^\eps_t(\ell,j)dt=\mathbf{1}_{\ell=k}.
		\end{equation}
		\item[(ii)] For any $a>0$, there exist $\eps_0=\eps_0(a,T)>0,c  =c(a,T)\in(0,1)$ such that for all $\eps<\eps_0$, $k\in\N^*$,
		\begin{equation}\label{e:hkboundcancel1}
			\sum_{j=1}^\infty\int_0^{\eps^{-4}T}\left|\nabla^+\p^\eps_r(k,j)\nabla^-\p^\eps_r(k,j)\right|e^{a\eps^2|k-j|}dr\leq c.
		\end{equation}
		\item[(iii)]   For any $a>0$, there exist $\eps_0=\eps_0(a,T)>0,C  =C(a,T)\in\R_+$ such that for all $\eps<\eps_0,k\in\N^*,t\leq T$,
		\begin{equation}\label{e:hkboundcancel2}
			\sum_{j=1}^\infty\int_0^{\eps^{-4}t}\left|\nabla^+\p^\eps_r(k,j)\nabla^-\p^\eps_r(k,j)\right|e^{a\eps^2|k-j|}\frac{1}{\sqrt{\eps^{-4}t-r}}dr\leq C\eps^2.
		\end{equation}
	\end{enumerate}
\end{lemma}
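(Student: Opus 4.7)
The plan for (i) is to interpret the sum as a discrete Plancherel-type identity for the heat semigroup with Robin boundary condition. By the symmetry $\p_t^\eps(x,y)=\p_t^\eps(y,x)$ (evident from the image representation \eqref{eq:imagesmethodrepresentation}) and Chapman--Kolmogorov, $S(t):=\sum_{y\geq 0}\p_t^\eps(x,y)\p_t^\eps(x',y)=\p_{2t}^\eps(x,x')$, which equals $\mathbf{1}_{x=x'}$ at $t=0$ and tends to $0$ as $t\to\infty$ because $\mu<1$ creates a killing mechanism at the boundary. Differentiating in $t$ via the heat equation and exploiting the self-adjointness of $\Delta^\mu$, then performing a discrete summation by parts on $\N$ while carefully keeping the boundary term produced by the Robin condition at $y=0$, yields
\[
\tfrac{d}{dt}S(t) = -\sum_{y\geq 0}\nabla^+_y\p_t^\eps(x,y)\,\nabla^+_y\p_t^\eps(x',y) - (1-\mu)\,\p_t^\eps(x,0)\,\p_t^\eps(x',0).
\]
Integrating over $t\in[0,\infty)$ and using $S(0)-S(\infty)=\mathbf{1}_{x=x'}$ delivers \eqref{e:keycancel} up to a boundary contribution of order $(1-\mu)\int_0^\infty\p_t^\eps(x,0)\p_t^\eps(x',0)\,dt$, which is $O(\eps)$ and must be tracked (and either shown to vanish exactly or absorbed as an error) throughout what follows.

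For part (ii), the key algebraic identity is $\nabla^-f(y)=-\nabla^+f(y-1)$, so $|\nabla^+\p_r^\eps(x,y)\nabla^-\p_r^\eps(x,y)|=|\nabla^+\p_r^\eps(x,y)|\cdot|\nabla^+\p_r^\eps(x,y-1)|$. Applying Cauchy--Schwarz in $(r,y)$ reduces the claim to bounding the weighted squared-gradient integral $\sum_{y\geq 0}\int_0^{\eps^{-4}T}(\nabla^+\p_r^\eps(x,y))^2\,e^{2a\eps^2|x-y|}dr$. To absorb the exponential weight, I would peel off one factor of $|\nabla^+\p_r^\eps|$ and estimate it pointwise by \eqref{e:hkbound4}, choosing $b$ large enough that the resulting Gaussian factor $e^{-b|x-y|(1\wedge r^{-1/2})}$ dominates $e^{2a\eps^2|x-y|}$ uniformly for $r\leq\eps^{-4}T$; the remaining factor $|\nabla^+\p_r^\eps|$ is then integrated without weight via \eqref{e:keycancel}, yielding a total bound strictly below $1$ for $\eps$ small enough. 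The strategy for (iii) is identical, except that the additional weight $1/\sqrt{\eps^{-4}t-r}$ contributes, after the substitution $r=\eps^{-4}s$, an integrable factor $\eps^2\int_0^t ds/\sqrt{t-s}\leq 2\eps^2\sqrt{T}$, producing the announced $O(\eps^2)$ improvement in \eqref{e:hkboundcancel2}.

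The main obstacle I anticipate is the delicate balance between the exponential weight $e^{a\eps^2|x-y|}$ and the Gaussian decay of $\nabla^\pm\p_r^\eps$ supplied by \eqref{e:hkbound4}. For small $r$ the kernel $\nabla^\pm\p_r^\eps$ is very concentrated (decay rate $\sim 1$) so the weight is trivially harmless; for $r\sim\eps^{-4}T$, however, the decay rate $r^{-1/2}\sim\eps^2$ becomes comparable to the weight $\eps^2|x-y|$, leaving essentially no spectral gap. Establishing the strict inequality $c<1$ in \eqref{e:hkboundcancel1} uniformly in $\eps$ requires a careful dyadic or interpolation argument in both space ($|x-y|$) and time ($r$) that matches these two regimes, and this is where I expect the bulk of the technical work to lie. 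A similar, though easier, version of the same trade-off will dictate the sharp $\eps^2$ prefactor in (iii).
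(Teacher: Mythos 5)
There is a genuine gap, and it starts in part (i).

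Your summation-by-parts computation is itself correct: if $\nabla^+$ is taken with respect to the summed variable $y$, then differentiating $S(t)=\p^\eps_{2t}(x,x')$ and integrating the Robin Dirichlet form does produce
\[
\sum_{y\geq 0}\int_0^\infty \nabla^+_y\p^\eps_t(x,y)\,\nabla^+_y\p^\eps_t(x',y)\,dt
=\mathbf 1_{x=x'}-(1-\mu)\int_0^\infty \p^\eps_t(x,0)\p^\eps_t(x',0)\,dt.
\]
The problem is that this is not the quantity in \eqref{e:keycancel}. In \eqref{e:keycancel}, and throughout (ii)--(iii), the gradients $\nabla^\pm$ act on the \emph{first} (free) argument $x$, not on $y$; this is unambiguous from the paper's own proof, which applies $\nabla^+_x\nabla^+_{x'}$ to the Chapman--Kolmogorov identity $\sum_y\p^\eps_t(x,y)\p^\eps_t(x',y)=\p^\eps_{2t}(x,x')$, integrates in $t$ to obtain the discrete mixed second difference $G(x+1,x'+1)-G(x+1,x')-G(x,x'+1)+G(x,x')$ of the Green's function, and evaluates that by first-step analysis. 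Because the $x,x'$ derivatives commute with both Chapman--Kolmogorov and the time integral, no boundary term is produced, and the answer is exactly $\mathbf 1_{x=x'}$. The two interpretations are genuinely different for the Robin kernel: from the image representation, $\nabla^+_x\p^\eps_t(x,y)-\nabla^+_y\p^\eps_t(x,y)=p_t(x-y+1)-p_t(x-y-1)\neq 0$ in general, and the boundary term you found (of order $\eps\log(1/\eps)$, not exactly zero) is precisely this discrepancy integrated in $t$. So rather than a small error to be ``tracked and absorbed,'' it is a sign that you are proving a different statement. This also matters for consistency with the way (i) feeds into (ii): the algebra $\nabla^-\p^\eps_r(x,y)=-\nabla^+\p^\eps_r(x-1,y)$ (gradients in $x$) turns the integrand of \eqref{e:hkboundcancel1} into $-\nabla^+\p^\eps_r(x,y)\nabla^+\p^\eps_r(x-1,y)$, which is exactly the object that \eqref{e:keycancel} controls (with $x'=x-1$); the $y$-gradient version of the same identity does not line up.

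For (ii) and (iii) your plan also has a structural problem. After Cauchy--Schwarz you propose to ``peel off one factor of $|\nabla^+\p^\eps_r|$'' and then ``integrate the remaining factor without weight via \eqref{e:keycancel},'' but \eqref{e:keycancel} is a signed identity for a \emph{product} of two gradients; it does not bound $\sum_y\int_0^\infty|\nabla^+\p^\eps_r(x,y)|\,dr$, and in fact that $L^1$-type quantity diverges (by \eqref{e:hkbound5} the inner sum is of order $r^{-1/2}$, which is not integrable at infinity). Moreover a plain Cauchy--Schwarz bound $\sum_y\int|\nabla^+\p^\eps\nabla^-\p^\eps|\le(\sum_y\int(\nabla^+\p^\eps)^2)^{1/2}(\sum_y\int(\nabla^-\p^\eps)^2)^{1/2}$ only yields $\le 1$, with the strict inequality $c<1$ uniformly in $\eps$ coming from the precise interplay between the exact cancellation \eqref{e:keycancel}, the exponential decay \eqref{e:hkbound4}, and the summed-gradient bound \eqref{e:hkbound5} as carried out in \cite[Cor.~5.4]{corwin2016open}; your outline does not isolate where that strict gap would come from. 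So, while your summation-by-parts identity is a correct and pleasant observation about the $y$-Dirichlet form, it is not the route to Lemma~\ref{lem:cons} as stated, and the argument for (ii)--(iii) needs to follow the cancellation structure rather than Cauchy--Schwarz.
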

\begin{proof}
	The last two bounds \eqref{e:hkboundcancel1} and \eqref{e:hkboundcancel2} follow from the first ``key cancellation'' identity \eqref{e:keycancel}, together with the previous bounds \eqref{e:hkbound4} and \eqref{e:hkbound5}, by the same arguments as in \cite[Proof of Corollary 5.4]{corwin2016open}.
	
	Let us show \eqref{e:keycancel}, which can be proved in a more elementary way than is done in \cite{corwin2016open} (where it is Proposition 5.1). Indeed, note that $\sum_{j=0}^\infty \p^\eps_t(k,j)\p^\eps_t(\ell,j)=\p^\eps_{2t}(k,\ell)$ by symmetry of $\p^\eps_t$ and the Markov property of the random walk described after Definition~\ref{d:heatkernel}. Moreover, $\int_0^\infty\p^\eps_{2t}(k,\ell)dt=\frac{1}{2}G(k,\ell)$ is the Green function associated with that random walk, \textit{i.e.}~the expected number of times the random walk started from $k$ goes through $\ell$ before being killed. Elementary computations therefore yield
	\begin{multline}
		{2}\sum_{j=0}^\infty\int_0^\infty \nabla^+\p^\eps_t(k,j)\nabla^+\p^\eps_t(\ell,j)dt \\ =G(k+1,\ell+1)+G(k,\ell)-G(k+1,\ell)-G(k,\ell+1).
	\end{multline}
	By the interpretation of $G$ in terms of number of visits, it is clear that $G(k+1,\ell+1)=G(k,\ell+1)$ if $k>\ell$ and $G(k+1,\ell)=G(k,\ell)$ if $k\geq \ell$. Moreover, by first-step analysis, $G(k+1,k+1)=1+\frac{1}{2}\left[G(k,k+1)+G(k+2,k+1)\right]=1+\frac{1}{2}\left[G(k,k+1)+G(k+1,k+1)\right]$, and therefore $G(k+1,k+1)=2+G(k,k+1)$. Then \eqref{e:keycancel} follows.
\end{proof}

\subsection{\oo{Long-time and proto-Hölder} estimates with exact computations}\label{sec:moment}

In this section, we give a long time estimate for the heat kernel $\p_t^R$, as well as a Hölder bound for the convolution of $p^\eps_t$ with the empty initial condition $Z_0(k)=\mu^k$.

\begin{proposition}\label{prop:boundinitialcondition}
	There exists a constant $C>0$  such that for any  $\eps\in (0,1)$, $t>0$, $\alpha\in [0,\mm{1})$   and $k,\ell \in \N$,  we have
	\begin{equation}
		\p_t^R(k,\ell) \leqslant  \frac{C}{\sqrt{t}},  \qquad \text{ for any $\mu \in (0,1)$}
		\label{eq:boundheatkernel}
	\end{equation}
	and 
	\begin{align}
		\big|  (\p_t^\varepsilon \ast \varepsilon^{-2}Z_0)(k)-(\p_t^\varepsilon \ast \varepsilon^{-2}Z_0)(\ell)\big|  \leqslant C \left( \eps^2\vert k-\ell\vert\right)^{\alpha}(\eps^4 t)^{-1-\alpha/2}. \vphantom{\bigg(}
		\label{eq:boundIC2}
	\end{align}
	
\end{proposition}

\begin{remark} Note that the constant $C$ in \eqref{eq:boundheatkernel} is universal (\mm{although likely not optimal}), and does not depend on $t$ nor on the terminal time $T$.  This estimate is called \emph{long-time estimate} in \cite{parekh2017kpz}, see Proposition 3.6 therein. Here, we provide a different proof and obtain an even better estimate.
\end{remark}

\begin{proof}
	Let us first establish explicit formulas for the quantities to bound in \eqref{eq:boundheatkernel} and \eqref{eq:boundIC2}.  For $t\geqslant 0$ and $k\in \Z$,  we have the integral representation of the heat kernel 
	\begin{equation}
		p_t(k) = \frac{1}{2\I\pi}\oint  e^{\frac{1}{2}\left(\xi+\xi^{-1}-2\right) t} \;\xi^k \frac{d \xi}{\xi},
		\label{eq:heatkernelintegral}
	\end{equation}
	where the contour is a positively oriented circle around $0$. 
	From \eqref{eq:heatkernelintegral} and \eqref{eq:imagesmethodrepresentation}, we deduce 
	\begin{equation}
		\p_t^R(k,\ell) = \frac{1}{2\I\pi}\oint   e^{\frac{1}{2}\left(\xi+\xi^{-1}-2\right) t} \xi^k \left( \xi^{-\ell}+ \xi^{\ell+1} \frac{\mu-\xi}{1-\mu \xi}
		\right) \frac{d \xi}{\xi},
		\label{eq:heatkernelRobinintegral}
	\end{equation}
	where the contour is a positively oriented circle around $0$ with radius smaller than $\mu^{-1}$. Hence, we have that 
	\begin{equation}
		\sum_{\ell\ge 0} \p_t^R(k,\ell) \mu^\ell = \mathbb E\left[ Z_t(k) \right]= \frac{1}{2\I\pi}\oint   e^{\frac{1}{2}\left(\xi+\xi^{-1}-2\right) t} \xi^k \frac{(1-\mu^2)(1-\xi^2)}{(1-\mu/\xi)(1-\mu\xi)^2}  \frac{d \xi}{\xi}
		\label{eq:formulafirstmoment}
	\end{equation}
	where the contour is a positively oriented circle around $0$ with radius comprised between $\mu$ and $\mu^{-1}$, and
	\begin{equation}
		\sum_{j\ge 0} \p_t^R(k,j) \mu^j -   \sum_{j\ge 0} \p_t^R(\ell,j) \mu^j = \frac{1}{2\I\pi}\oint   e^{\frac{1}{2}\left(\xi+\xi^{-1}-2\right) t} (\xi^k-\xi^\ell) \frac{(1-\mu^2)(1-\xi^2)}{(1-\mu/\xi)(1-\mu\xi)^2}  \frac{d \xi}{\xi}.
		\label{eq:differenceIC}
	\end{equation}
	Now we estimate the integrals above. Since $\mu\in (0,1)$, we may assume that the contour (in each formula above) is a circle of radius $1$. Now, since $\xi$ has modulus $1$, we have that $\vert \xi^{k-\ell} \vert =\vert \xi^{k+\ell+1}\vert =1$ and for any $\mu\in (0,1)$, $\vert \mu - \xi\vert =\vert \mu -\bar \xi\vert =\vert \mu\xi-1\vert $ so that $\big| \frac{\mu-\xi}{1-\mu \xi} \big| =1$. 
	Using the change of variables $\xi=e^{\I\theta }$ in \eqref{eq:heatkernelRobinintegral}, we get 
	\begin{equation*}
		\p_t^R(k,\ell) \leqslant \frac{2}{2\pi}\int_{-\pi}^{\pi}  e^{\frac{t}{2}\Re\left[e^{\I \theta}+e^{-\I\theta}-2\right]} d\theta. 
	\end{equation*}
	Then, we use the estimate $\cos(x)-1 \leqslant \frac{-x^2}{5}$, valid for $x\in (-\pi, \pi)$, so that 
	\begin{equation} \Re[e^{\I \theta}+e^{-\I\theta}-2 ] =2\cos( \theta)-2 \leqslant \frac{-2 \theta^2}{5}.
		\label{eq:eigenvalue}
	\end{equation}
	Hence, setting $t=\eps^{-4}T$, we obtain 
	\begin{equation*}
		\p_t^R(k,\ell) \leqslant \frac{\eps^2}{\pi} \int_{-\pi}^{\pi} e^{\frac{-\theta^2 t}{5}} d\theta.
	\end{equation*}
	Finally, using the change of variables $\theta=\tilde \theta t^{-1/2}$, 
	\begin{equation*}
		\p_t^R(k,\ell) \leqslant \frac{1}{\pi \sqrt{t}}\int_{\mathbb R} e^{\frac{-\tilde \theta^2}{5}}d\tilde \theta
	\end{equation*} 
	which proves \eqref{eq:boundheatkernel} with $C=\frac{1}{\pi}\int_{\mathbb R} e^{\frac{-z^2}{5}} d z$. 
	
	Now we assume that $\mu=e^{-\eps}$. 
	Using the change of variables $\xi=e^{\I\theta \eps^{2}}$, and setting  $\eps^{2}k=X$, \oo{$\eps^{2}\ell=Y$, \eqref{eq:differenceIC} }becomes 
	\begin{multline}
		\eps^{-2}  \sum_{j\ge 0} \oo{(\p_t^\eps(k,j)-\p^\eps_t(\ell,j))} \mu^j =\\ \frac{1}{2\pi}\int_{-\eps^{-2}\pi}^{\eps^{-2}\pi}   e^{\frac{1}{2}\left(e^{\I\eps^2 \theta}+e^{-\I\eps^2\theta}-2\right) t}\oo{ \left(e^{\I \theta  X}-e^{\I\theta Y}\right) }\frac{(1-e^{-2\eps})(1-e^{2\I\eps^2\theta})}{(1-e^{-\eps-\I\eps^2\theta})(1-e^{-\eps+\I\eps^2\theta})^2}  d\theta. 
		\label{eq:expectationsolution}
	\end{multline}
	Using $ x-\frac{x^2}{2} \leqslant 1-e^{-x} \leqslant x $ for $x>0$, and $0 \leqslant 1-\cos(x) \leqslant \frac{x^2}{2}$ for $x\in (-\pi, \pi)$,  we have 
	\begin{equation*} \left\vert \frac{(1-e^{-2\eps})(1-e^{2\I\eps^2\theta})}{(1-e^{-\eps-\I\eps^2\theta})(1-e^{-\eps+\I\eps^2\theta})^2}  \right\vert = \frac{(1-e^{-2\eps})\sqrt{2(1-\cos(2\eps^2\theta))}}{\left(1+e^{-2\eps}-2 e^{-\eps} \cos(\eps^2\theta)\right)^{3/2}}  \leqslant \frac{4\eps^3\vert \theta \vert}{\left(\eps-\eps^2/2\right)^3}= \frac{4\vert \theta\vert }{(1-\eps/2)^3}.
	\end{equation*}
	Thus, using additionally the estimate \eqref{eq:eigenvalue}  and setting $t=\eps^{-4}\oo{\tau}$,  we obtain the bound 
	\begin{equation*}
		\left|\eps^{-2} \sum_{j\ge 0} \oo{(\p_t^\eps(k,j)-\p^\eps_t(\ell,j))} \mu^j \right| \leqslant  \frac{1}{2\pi} \int_{-\eps^{-2}\pi}^{\eps^{-2}\pi} e^{\frac{-\theta^2 \oo{\tau}}{5}} \frac{4\vert\theta\vert }{(1-\eps/2)^3} \oo{ \left|e^{\I \theta  X}-e^{\I\theta Y}\right| }d\theta.
	\end{equation*}
	The change of variables $\theta=\tilde \theta \oo{\tau}^{-1/2}$ now yields (for  $\eps\in (0,1)$) 
	\begin{equation}
		\left|\eps^{-2} \sum_{j\ge 0} \oo{(\p_t^\eps(k,j)-\p^\eps_t(\ell,j))} \mu^j \right|  \leqslant \frac{1}{\oo{\tau}} \frac{16}{\pi}\int_{\R} \vert \tilde \theta \vert e^{\frac{-\tilde \theta^2}{5}}\left\vert e^{\I X\tilde \theta \oo{\tau}^{-1/2} }- e^{\I Y\tilde \theta \oo{\tau}^{-1/2} }\right\vert d\tilde \theta.\label{eq:bounddifference}
	\end{equation}
	Using the change of variables $u=2\theta$ and choosing a constant $C$ s.t.~$\frac{16}{\pi}\vert \tilde \theta \vert e^{\frac{-\tilde \theta^2}{5}} \leqslant C e^{\frac{-u^2}{2}}$,
	\begin{align*}
		\left|\eps^{-2} \sum_{j\ge 0} \oo{(\p_t^\eps(k,j)-\p^\eps_t(\ell,j))} \mu^j \right| &\leqslant \frac{C}{\oo{\tau}} \int_{\R}   e^{\frac{-u^2}{2}}  \left\vert e^{2\I X u \oo{\tau}^{-1/2} }- e^{2\I Yu \oo{\tau}^{-1/2} }\right\vert du ,\\ 
		&\leqslant \frac{C}{\oo{\tau}} \int_{\R}   e^{\frac{-u^2}{2}}  \left\vert 1- e^{2\I (X-Y)u \oo{\tau}^{-1/2} }\right\vert du, \\ 
		&{\leqslant\frac{4C}{\oo{\tau}}\int_{\R_+}e^{-u^2/2}\min\left(u|X-Y|\oo{\tau}^{-1/2},1\right)du}\\
		&\leqslant{\frac{4C}{\oo{\tau}}\min\left\{|X-Y|\oo{\tau}^{-1/2}\int_{\R_+}ue^{-u^2/2}du,\int_{\R_+}e^{-u^2/2}du\right\}}\\
		&\leqslant {\frac{C'}{\oo{\tau}} \min\big\{ \vert X-Y\vert  \oo{\tau}^{-1/2}, 1 \big\}},
	\end{align*}
	{where $C'=4C\sqrt{\pi/2}$.} Thus, \eqref{eq:boundIC2} holds for any $\alpha\in (0,1]$. 
\end{proof} 

\subsection{H\"older property for the heat kernels} \label{ssec:holder}
	
	\mm{We prove here a H\"older property for the heat kernels which will be used to get suitable H\"older estimates for the scaled  process $\mathcal{Z}_t^\varepsilon$ defined in \eqref{eq:scaled1} (see Section \ref{subs:neareq} below). Recall that  $p_t$ stands for the kernel of the continuous time symmetric simple random walk on $\Z$. }
	
\begin{lemma}\label{l:Holder-heat-kernel}Fix $0<\delta<T$. There exists a constant $C=C(\delta,T)$ s.t. for $\eps$ small enough, 
\begin{enumerate}
 \item  for any $\tau\in[\delta,T], x,y\in\R$, letting $t=\eps^{-4}\tau$, $k=\lfloor\eps^{-2}x\rfloor,\ell=\lfloor\eps^{-2}y\rfloor$,
 \begin{equation}
	\sum_{j\in \Z} \vert p_t(k-j)-p_t(\ell-j) \vert \leqslant \frac{C\vert x-y\vert}{\sqrt{\tau}}.
	\label{eq:diffheatkernel}	
	\end{equation}
	\item  for any $\tau\in[\delta,T], x,y\in\R_+$, letting $t=\eps^{-4}\tau$, $k=\lfloor\eps^{-2}x\rfloor,\ell=\lfloor\eps^{-2}y\rfloor$
	\begin{equation}
	 \sum_{j\in \N} \vert \p^\eps_t(k,j)-\p^\eps_t(\ell,j) \vert \leqslant \frac{C\vert x-y\vert}{\sqrt{\tau}}.
	\label{eq:diffheatkerneleps}
	\end{equation}

\end{enumerate}

\end{lemma}

\begin{proof}
 Let us first prove \eqref{eq:diffheatkernel}. Let $X_t^{(k)}$ and $X_t^{(\ell)}$  denote continuous time simple random walks starting from $k$ and $\ell$ respectively. We may write for any $k,\ell,j\in\Z$
\begin{equation}
	p_t(k-j)-p_t(\ell-j) = \mathbb E^{(k,\ell)}\left[ \mathds{1}_{X_t^{(k)}=j} - \mathds{1}_{X_t^{(\ell)}=j} \right],
	\label{eq:diffheatkernel2}
\end{equation}
where the expectation $\mathbb E^{(k,\ell)}$ is associated to a probability distribution $\mathbb P^{(k,\ell)}$ of the couple of random walks $(X_t^{(k)}, X_t^{(\ell)})$. The identity \eqref{eq:diffheatkernel2} is true as long as the marginal distribution of each walk under $\mathbb P^{(k,\ell)}$ is the continuous time random walk with kernel $p_t$, but the two random walks are not necessarily independent. Let us first choose $\mathbb P^{(k,\ell)}=\mathbb P_{\mathbf{Coal}}^{(k,\ell)}$ to be the probability distribution of two coalescing random walks with kernel $p_t$ starting from $k$ and $\ell$ respectively.  

We then obtain 
\begin{equation}
\sum_{j=0}^{+\infty} \vert p_t(k-j)-p_t(\ell-j) \vert \leqslant \mathbb E_{\mathbf{Coal}}^{(k,\ell)} \left[\sum_{j=0}^{+\infty} \left\vert \mathds{1}_{X_t^{(k)}=j} - \mathds{1}_{X_t^{(\ell)}=j} \right\vert  \right].
\end{equation}
The sum inside the expectation is non-zero only if the two walks have not coalesced before time $t$. 
Let us denote $\mathbf{Coal}_t=\{X_t^{(k)}=X_t^{(\ell)}\}$ the coalescing event, and by $\mathbf{Coal}^{\complement}_t$ its complement. We have
\begin{align*}
\sum_{j=0}^{+\infty} \vert p_t(k-j)-p_t(\ell-j) \vert& \leqslant \mathbb E_{\mathbf{Coal}}^{(k,\ell)} \left[\mathbf{1}_{\mathbf{Coal}^{\complement}_t}\sum_{j=0}^{+\infty} \left( \mathds{1}_{X_t^{(k)}=j} + \mathds{1}_{X_t^{(\ell)}=j} \right)  \right]\\
&\leqslant 2\mathbb P_{\mathbf{Coal}}^{(k,\ell)}	\left( \mathbf{Coal}^{\complement}_t \right).
\end{align*}
Let us denote by $\mathbf{Intersect}_t^{\complement}\mm{=\{\forall\,s\in [0,t],\; X_s^{(k)} \neq X_s^{(\ell)}\}}$ the event that two walks do not intersect on $[0,t]$. 
We have \[\mathbb P_{\mathbf{Coal}}^{(k,\ell)}	\left( \mathbf{Coal}^{\complement}_t \right) =  \mathbb P_{\mathbf{Ind}}^{(k,\ell)}	\left( \mathbf{Intersect}_t^{\complement} \right),\] where $\mathbb P_{\mathbf{Ind}}^{(k,\ell)}$ is the probability distribution of two independent continuous time random walks with kernel $p_t$ starting from $k$ and $\ell$. At this point, we have arrived at 
\begin{equation}
	\sum_{j=0}^{+\infty} \vert p_t(k-j)-p_t(\ell-j) \vert \leqslant 2 \mathbb P_{\mathbf{Ind}}^{(k,\ell)}	\left( \mathbf{Intersect}_t^{\complement} \right). 
\end{equation}
We now estimate the latter probability. We have
\begin{align*}
 \mathbb P_{\mathbf{Ind}}^{(k,\ell)}	\left( \mathbf{Intersect}_t^{\complement} \right)&\leqslant \mathbb P_{\mathbf{Ind}}^{(k,\ell)}	\left(\min_{0\leqslant s\leqslant  t}|X^{(k)}_s-X^{(\ell)}_s|\leq 0\right)\\
 &\leqslant \mathbb P\left(\max_{0\leq s\leq t}\lbrace  S_s\rbrace < \vert k-\ell\vert  \right),
\end{align*}
where $S$ is the simple symmetric continuous time random walk (jumping by $+1$ and $-1$ both with rate $1$) started from $0$.

 By the reflection principle for simple random walks, for any $n\in\N\setminus\{0\}$
 \begin{equation*}
  \mathbb P\left(\max_{0\leq s\leq t}\lbrace  S_s\rbrace =n \right)=\mathbb P(S_t=n)+\mathbb P(S_t=n+1).
 \end{equation*}
Therefore,
\begin{align*}
 \mathbb P\left( \max_{0\leq s\leq t}\lbrace  S_s\rbrace < n \right) &= \mathbb P\left(\max_{0\leq s\leq t}\lbrace  S_s\rbrace =0\right)+\mathbb P(0<S_t<n)+\mathbb P(1<S_t<n+1)\\
 &\leqslant
  2\P(0\leqslant S_t<n).
\end{align*}
In particular
\begin{align*}
 \sum_{j=0}^{+\infty} \vert p_t(k-j)-p_t(\ell-j) \vert \leqslant 4\P\left(0\leqslant \frac{S_t}{\sqrt{t}}<\frac{|k-\ell|}{\sqrt{t}}\right).
\end{align*}
Then, \eqref{eq:diffheatkernel}  follows from the Central Limit Theorem and Dini's Theorem.

 We now show that \eqref{eq:diffheatkernel} implies \eqref{eq:diffheatkerneleps}. Recall \eqref{eq:imagesmethodrepresentation}, which yields
 \begin{multline}
\sum_{j=0}^{+\infty} \vert \p^\eps_t(k,j)-\p^\eps_t(\ell,j) \vert \leqslant  
\sum_{j=0}^{+\infty} \vert p_t(k-j)-p_t(\ell-j)\vert  + \mu \sum_{j=0}^{+\infty} \vert p_t(k+j+1)-p_t(\ell+j+1) \vert \\+ \vert 1-\mu^{-2}\vert\sum_{i=2}^{\infty} \mu^i \sum_{j=0}^{+\infty} \vert p_t(k+j+i)-p_t(\ell+j+i) \vert.
\end{multline}
In the right hand side, we may replace summations over $\N$ by summations over $\mathbb Z$. We get
\begin{multline}
	\sum_{j=0}^{+\infty} \vert \p^\eps_t(k,j)-\p^\eps_t(\ell,j) \vert \leqslant   
	\sum_{j\in \Z} \vert p_t(k-j)-p_t(\ell-j) \vert +  \mu \sum_{j\in\Z} \vert p_t(k+j+1)-p_t(\ell+j+1) \vert \\+ \vert 1-\mu^{-2}\vert \sum_{i=2}^{\infty} \mu^i \sum_{j\in \Z} \vert p_t(k+j+i)-p_t(\ell+j+i) \vert,
\end{multline}
so that reindexing and using \eqref{eq:diffheatkernel} in each of the three terms, with $k=\lfloor\eps^{-2}x\rfloor,$ and $\ell=\lfloor\eps^{-2}y\rfloor$, we get 
\begin{align*}
	\sum_{j=0}^{+\infty} \vert \p^\eps_t(k,j)-\p^\eps_t(\ell,j) \vert &\leqslant  
	\frac{C\vert x-y\vert}{\sqrt{\tau}} +  \mu \frac{C\vert x-y\vert}{\sqrt{\tau}} + \vert 1-\mu^{-2}\vert \sum_{k=2}^{\infty} \mu^k \frac{C\vert x-y\vert}{\sqrt{\tau}}\\
	&\leqslant \frac{C\vert x-y\vert}{\sqrt{\tau}} \left(1+\mu +\frac{1- \mu^2}{1-\mu} \right)\\
	&\leqslant \frac{5C\vert x-y\vert}{\sqrt{\tau}}
\end{align*}
for $\varepsilon$ small enough.  
\end{proof}

\subsection{Estimate on the killing probability} \label{ssec:killing}
Recall the interpretation of the heat kernel in terms of random walks killed at the boundary, presented at the beginning of Section~\ref{sec:heatkernel}. We estimate here the probability that the random walk started from $k$ is killed before time $t$.
\begin{lemma}\label{lem:killing}
For any $a>0$ and $T>0$, there exists $C=C(a,T)\in\R_+$ such that for any $k\in\N$ and  $t\in[1,\eps^{-4}T]$,
 \begin{equation}\label{eq:killingestimate}
  \bigg|1-\sum_{j\in\N}\p^\eps_t(k,j)\bigg|\leq C\exp\left(-a \frac{\eps^2k}{\sqrt{\eps^4t}}\right).
 \end{equation}
\end{lemma}
\begin{proof}
Recall that $\p_t^\eps(k,j)$ is given by \eqref{eq:heatkernelRobinintegral} with $\mu=e^{-\eps}$. Summing over $j$, we obtain 
\begin{align*}
	\sum_{j\in \mathbb N} 	\p_t^\eps(k,j) &=  \frac{1}{2\I\pi}\oint_{\mathcal C_R}   e^{\frac{1}{2}\left(\xi+\xi^{-1}-2\right) t} \xi^k \sum_{j\in \mathbb N} \xi^{-j}  \frac{d \xi}{\xi}\nonumber  +  \frac{1}{2\I\pi}\oint_{\mathcal C_R}   e^{\frac{1}{2}\left(\xi+\xi^{-1}-2\right) t} \xi^k \sum_{j\in \mathbb N} \xi^{j+1} \frac{\mu-\xi}{1-\mu \xi}
	\frac{d \xi}{\xi},\\
	&= \frac{1}{2\I\pi}\oint_{\mathcal C_{>1}}   e^{\frac{1}{2}\left(\xi+\xi^{-1}-2\right) t} \frac{\xi^k}{1-1/\xi }   \frac{d \xi}{\xi} +  \frac{1}{2\I\pi}\oint_{\mathcal C_{<1}}   e^{\frac{1}{2}\left(\xi+\xi^{-1}-2\right) t} \frac{\xi^{k+1}}{1-\xi} \sum_{j\in \mathbb N} \xi^{j+1} \frac{\mu-\xi}{1-\mu \xi}
	\frac{d \xi}{\xi},
\end{align*}
where in the first equality, we simply brought the summation inside the integral, and in the second equality, we compute the sums after having deformed the first contour to be a circle of radius larger than $1$, and the second contour to be a circle of radius smaller than $1$.

Computing the residue at $\xi=1$, the first integral is 
\[\frac{1}{2\I\pi}\oint_{\mathcal C_{>1}}   e^{\frac{1}{2}\left(\xi+\xi^{-1}-2\right) t} \frac{\xi^k}{1-1/\xi }   \frac{d \xi}{\xi} = 1+\frac{1}{2\I\pi}\oint_{\mathcal C_{<1}}   e^{\frac{1}{2}\left(\xi+\xi^{-1}-2\right) t} \frac{\xi^k+1}{\xi-1 }   \frac{d \xi}{\xi}\]
so that 
\begin{equation}
	\sum_{j\in \mathbb N} 	\p_t^\eps(k,j) = 1 + \frac{1}{2\I\pi}\oint_{\mathcal C_{<1}}   e^{\frac{1}{2}\left(\xi+\xi^{-1}-2\right) t} \frac{\xi^{k+1}}{\xi-1 }  \left(1- \frac{\mu-\xi}{1-\mu\xi}\right) \frac{d \xi}{\xi}.
\end{equation}
Rearranging the formula, 
\begin{equation}
	1-\sum_{j\in \mathbb N} 	\p_t^\eps(k,j) = \frac{1}{2\I\pi}\oint_{\mathcal C_{<1}}   e^{\frac{1}{2}\left(\xi+\xi^{-1}-2\right) t} \frac{\xi^{k+1}}{\xi-1 } \frac{(\mu-1)(\xi+1)}{1-\mu\xi}  \frac{d \xi}{\xi}.
\end{equation}
Now, we scale and define $t=\eps^{-4}\tau, \tau\in (0,T]$ , $x=\eps^{2}k$, and  $$\xi=e^{\I \eps^2  (\theta+\I a)/\sqrt{\tau}}$$
where the constant $a>0$ is arbitrary. Under these scalings, 
\[ \mathfrak{Re}\left[\frac{1}{2}\left(\xi+\xi^{-1}-2\right) t\right] = \frac{1}{2}(a^2-\theta^2)\tau  +o(\eps). \]
To obtain a bound uniformly in $\eps$, let us write 
\[\mathfrak{Re}\left[\frac{1}{2}\left(\xi+\xi^{-1}-2\right) t\right] =\left(\cosh(a \eps^2 /\sqrt{\tau}] \cos(\theta \eps^2/\sqrt{\tau})  - 1\right) \eps^{-4}\tau.\]
If $t\geq 1$, $a\eps^2/\sqrt{\tau}\leq a$, and we can find constant $\underline\kappa, \overline\kappa$ such that for all $y\in [-a,a]$, we have $1+\underline\kappa y^2 \leq \cosh(y)\leq 1+\overline\kappa y^2$. 	Moreover,  for all $y\in (-\pi,\pi)$,    $1-y^2/2 \leq \cos(y)\leq 1- y^2/5$. Hence, there exists a constant $C=C(a, T)$ such that for $t\in [1, \eps^{-4}T]$, 
\[ \mathfrak{Re}\left[\frac{1}{2}\left(\xi+\xi^{-1}-2\right) t\right] \leq  C -\theta^2/C.\]
Under these scalings, we also have $\vert \xi^k \vert \leq e^{-a \frac{x}{\sqrt{\tau}}}$. Hence, we can find a constant $C=C(a,T)$
such that 
\begin{equation}
	\bigg\vert 1-\sum_{j\in \mathbb N} 	\p_t^\eps(k,j) \bigg\vert \leq \frac{C}{2\pi} \int_{\mathbb R} d\theta e^{C-\theta^2/C} e^{-a \frac{x}{\sqrt{\tau}}} 
\end{equation}
Finally, we conclude that for any $a>0$, there exists a constant $C(a,T)$ such that for $t\in [1, \eps^{-4}T]$, 
\begin{equation}
	\bigg\vert 1-\sum_{j\in \mathbb N} 	\p_t^\eps(k,j) \bigg\vert \leq C(a,T) e^{-a \frac{x}{\sqrt{\tau}}} 
\end{equation}
which is equivalent to the statement of the lemma. 
\end{proof}

\subsection{Two useful lemmas} \label{ssec:useful}
We conclude this section by collecting two lemmas that will be essential in the proofs of the next sections.

\begin{lemma}[\oo{\cite{dembo2016weakly,corwin2016open}}] \label{l:iteration}
	For any $n\in\N^*$, there exists $C=C(n)<\infty$ such that, for any $F$ bounded {(deterministic) function} on $\R_+\times \N$, for any $t,t'\in\R_+$ such that $t'\ge t+1$,
	\begin{equation}
		\bigg\|{ \int_t^{t'}}\sum_{\ell=0}^\infty F(s,\ell)dM_s(\ell)\bigg\|_{2n}^2\leq C\eps^2{ \int_t^{t'}}\sum_{\ell=0}^\infty \bar F(s,\ell) ^2\big\|Z_s(\ell)\big\|^2_{2n}ds, \label{eq:it1}
	\end{equation}
	where $\displaystyle\bar F(s,\ell):=\sup_{\substack{|s'-s|<1 \\ s'\geq 0}}|F(s',\ell)|$.
	In particular, when $F(s,\ell)=\p^\eps_s(k,\ell)$ for some $k\in\N$, we have  \begin{equation}\label{eq:it2}\bar{\p^\eps_s}(k,\ell)^2\leq Cs^{-1/2}\p^\eps_{s+1}(k,\ell).\end{equation}
\end{lemma}
\begin{proof}[Proof of Lemma \ref{l:iteration}]
The first estimate \eqref{eq:it1} is proved in \cite[Lemma 3.1]{dembo2016weakly} or \cite[Lemma 4.18]{corwin2016open},where one can see that  the particular choice of boundary conditions does not play any role. Moreover, from \eqref{e:hkbound3}, we have, for any $s \geqslant 0$,
\[ \sup_{|s-s'|\le 1} \p^\eps_{s'}(k,\ell) \le e^2 \p_{s+1}^\eps(k,\ell).\] Therefore, from \eqref{eq:boundheatkernel} it follows that $\bar{\p^\eps_s}(k,\ell)^2\leq e^4 (\p_{s+1}^\eps(k,\ell))^2 \le C s^{-1/2} \p_{s+1}^\eps(k,\ell).$ 
\end{proof}

\begin{lemma}[\oo{\cite{bertini1997stochastic}}]
		\label{l:oriane} If $\bar\eps\in (0,1)$ is such that, for all $\eps\in(0,\bar\eps),$ $\nu\leq \eps$ and $|\log(\eps)|\eps\leq 1$, for any $t\geq 0$, $n\in\N$, there exists $C=C(n)\in\R_+$ such that for all $k\in\N$,
		\[\E \bigg[ \sup_{s\in[0,1]} \big|Z_{t+s}(k)-Z_t(k)\big|^{2n} \bigg] \le C(n)\varepsilon^{2n}\;\big\|Z_t(k)\big\|_{2n}^{2n}. \]
		In particular, assume $T>0$, $n\in \N$, and assume that there exists $C_0=C_0(T,n)>0$ and $\varepsilon_0>0$ such that, for all $\varepsilon \in (0,\varepsilon_0)$, all $k \in \N$ and $t \in [0,\varepsilon^{-4}T]$, 
		\begin{equation} \label{eq:hypmoment} \big\| Z_t(k) \big\|_{2n} \le C_0. \end{equation}
		Then, there exists $C_1=C_1(T,n) >0$ such that, for all $\varepsilon \in (0,\oo{\eps_0\wedge\bar\eps})$,
		\[  \sup_{t\in [0,\varepsilon^{-4}T]} \sup_{k\in\N} \E \bigg[ \sup_{s\in[0,1]} \big|Z_{t+s}(k)-Z_t(k)\big|^{2n} \bigg] \le C_1 \varepsilon^{2n}. \]
		\end{lemma}

     \begin{proof} This proof is contained in (4.63)--(4.65) of \cite{bertini1997stochastic}. For completeness, we reproduce it in Appendix~\ref{a:bertini-giacomin}. 
	\end{proof}

\section{Near-equilibrium initial condition: proof of Theorem \ref{t:cvneareq}} \label{sec:neareq}

In this section we assume that the initial condition  is near-equilibrium, and we prove Theorem \ref{t:cvneareq}. We are thus interested in the rescaled process \oo{$\scZ_t(x):=Z_{\varepsilon^{-4}t}(\varepsilon^{-2}x)$} defined  in \eqref{eq:scaled} for any $x\in \varepsilon^2\N$ and then extended to $\R_+$ by linear interpolation.
Together with the uniqueness of the solution to the martingale problem \eqref{e:contmartpb}--\eqref{e:contmartpb2}, the next two propositions prove the desired result stated in Theorem \ref{t:cvneareq}.

\begin{proposition}[Tightness]\label{p:tightness}
	Under the assumptions of Theorem~\ref{t:cvneareq}, the sequence of processes $\left(\mathscr{Z}^\eps_s\right)_{s\in[0,T]}$ is tight in the space $D([0,T],C(\R_+))$. Moreover, any limit point belongs to $C([0,T],C(\R_+))$.
\end{proposition}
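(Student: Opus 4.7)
The plan is to follow the Bertini--Giacomin / Corwin--Shen scheme as adapted to boundary-driven half-line ASEP in \cite{corwin2016open, parekh2017kpz}. Using the discrete heat kernel $\p^\eps_t$ of Definition \ref{d:epsheatkernel}, which automatically absorbs the boundary relation \eqref{eq:boundary}, Lemma \ref{lem:quadratic} yields the mild-form representation
\[
Z_t(x) = \sum_{y\in\N} \p^\eps_t(x,y)\, Z_0(y) + \int_0^t \sum_{y\in\N} \p^\eps_{t-s}(x,y)\, dM_s(y), \qquad x\in\N.
\]
Rescaling according to \eqref{eq:scaled} then decomposes $\scZ_t(u)$ into a deterministic convolution of the initial data with the discrete kernel, plus a stochastic integral whose quadratic variation is given by Lemma \ref{lem:quadeps}. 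Tightness in $D([0,T],C(\R_+))$, together with the a.s.\ continuity of limit points, will follow from $n$-th moment bounds on $\scZ_t(u)$ and on its space-time increments, via a weighted-space Kolmogorov criterion of the type used in \cite[Sec.~4--5]{corwin2016open} and \cite[Sec.~6]{parekh2017kpz}.

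First, I would establish the uniform moment estimate $\|\scZ_t(u)\|_n \leq C e^{au}$ on $[0,T]$. The deterministic part is controlled by combining the near-equilibrium hypothesis \eqref{e:momentZ0} with the exponential weighted bound \eqref{e:hkbound1}. For the stochastic part, Burkholder--Davis--Gundy and Minkowski reduce the problem to integrating $\p^\eps_{t-s}(x,y)^2$ against the explicit quadratic variation in \eqref{eq:quadx}--\eqref{eq:quad0}. The diagonal piece $\eps^2 Z_s^2$ is closed by a Grönwall loop on $\sup_u e^{-au}\|\scZ_s(u)\|_n$; the bulk cross-gradient term $\nabla^+Z_s \nabla^- Z_s$ is controlled after integration in $s$ by the key cancellation \eqref{e:hkboundcancel1}; and the new boundary term $-\eps Z_s(0)\nabla^+ Z_s(0)$ from \eqref{eq:quad0} is bounded via \eqref{e:hkboundcancel2}. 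Spatial Hölder bounds $\|\scZ_t(u)-\scZ_t(u')\|_n \leq C|u-u'|^\alpha e^{a(u+u')}$ are obtained along the same lines, now plugging in the kernel Hölder estimate \eqref{e:hkbound2} for the deterministic part (using \eqref{e:momentdiffZ0}) and bounding the gradient of $\p^\eps_{t-s}(\cdot,y)$ through \eqref{e:hkbound4}--\eqref{e:hkbound5}; time increments $\|\scZ_{t'}(u)-\scZ_t(u)\|_n \leq C|t'-t|^{\alpha/2} e^{au}$ follow from splitting the Duhamel integral on $[0,t]$ and $[t,t']$ and using the monotonicity \eqref{e:hkbound3}.

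Once these three families of moment estimates are in place, I would apply a Kolmogorov criterion in an exponentially weighted topology on $C(\R_+)$ to deduce tightness in $D([0,T],C(\R_+))$; the Hölder regularity in time (with positive exponent uniform in $\eps$) forces any weak limit to be a.s.\ continuous in $t$, hence to lie in $C([0,T],C(\R_+))$. Martingale limits of martingales with vanishing jumps further guarantee that there are no macroscopic jumps in the limit.

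The main obstacle I anticipate is handling the new boundary contribution $-\eps Z_s(0)\nabla^+ Z_s(0)$ in \eqref{eq:quad0}, which is absent from \cite{corwin2016open, parekh2017kpz}. The explicit prefactor $\eps$ is small but, after rescaling \eqref{eq:scaled}, the typical size of $Z_s(0)\nabla^+ Z_s(0)$ is a priori only controlled up to a factor $\eps^{-1}$, so the needed cancellation of scales must be extracted from the $(\eps^{-4}t-s)^{-1/2}$ denominator in \eqref{e:hkboundcancel2} combined with the boundary decay of $\nabla^+\p^\eps$. Making this integrable and showing that it does not spoil either the Grönwall closure for moments or the Hölder increments is, in my view, the crux of the proof.
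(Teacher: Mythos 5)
Your overall architecture matches the paper's: decompose $\scZ$ via the Duhamel mild form with the Robin heat kernel $\p^\eps$, prove the three families of $L^n$ moment bounds (uniform, spatial increment, time increment), and conclude tightness in $D([0,T],C(\R_+))$ together with continuity of limit points by an Arzela--Ascoli / Kolmogorov-type argument. This is exactly the route the paper takes, via Lemma~\ref{l:Ztbounds-cont} and the iteration Lemma~\ref{l:iteration}.

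However, there is a genuine conceptual error in how you plan to close the uniform moment bound. You propose to control the cross-gradient term $\nabla^+Z_s\nabla^-Z_s$ and the boundary term $-\eps Z_s(0)\nabla^+Z_s(0)$ in the quadratic variation \eqref{eq:quadx}--\eqref{eq:quad0} via the cancellation estimates \eqref{e:hkboundcancel1}--\eqref{e:hkboundcancel2}. This cannot work: those estimates concern the quantity $\nabla^+\p^\eps\,\nabla^-\p^\eps$ (discrete gradients of the heat kernel), whereas what appears inside the stochastic integral is $\p^\eps_{t-s}(x,y)^2$ multiplied by $\nabla^+Z_s(y)\nabla^-Z_s(y)$, i.e.\ gradients of the solution. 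They are different objects, and \eqref{e:hkboundcancel1} simply does not apply to the expression you have. More importantly, no cancellation is needed at the tightness stage. From the identity following Lemma~\ref{lem:quadratic} one has $|\nabla^{\pm}Z_t(x)|\leq C\eps\, Z_t(x)$, so that both $|\nabla^+Z\nabla^-Z|\leq C\eps^2 Z^2$ and $|\eps Z(0)\nabla^+Z(0)|\leq C\eps^2 Z(0)^2$. Hence $\frac{d}{dt}[M(x)]_t\leq C\eps^2 Z_t(x)^2$ uniformly in $x\in\N$, which is precisely the input for Lemma~\ref{l:iteration} and the Gr\"onwall iteration. Your anticipated ``main obstacle'' -- the extra boundary term at $x=0$ -- is therefore a non-issue for tightness: it is of the same order $\eps^2 Z^2$ as the bulk diagonal piece, and one simply takes absolute values and absorbs it.

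The key cancellation \eqref{e:keycancel} and its corollaries \eqref{e:hkboundcancel1}--\eqref{e:hkboundcancel2} are genuinely indispensable, but one stage later, in Proposition~\ref{p:limitpoints} (identification of limit points). There one must show that the error terms $R_2'$ and $R_3'$ in the quadratic variation $[N^\eps(\phi_\eps)]_t$ vanish, and there one cannot take absolute values because the bare bound gives $O(\eps^2 Z^2)$, the same order as the main term $\int(\scZ^2,\phi^2)ds$; the sign cancellation encoded in \eqref{e:keycancel}, propagated through the conditional expectation as in Lemma~\ref{l:U}, is exactly what extracts the extra smallness. Your proposal correctly identifies that the boundary term is new relative to \cite{corwin2016open,parekh2017kpz}, but misplaces where this difficulty actually bites: it is handled via $R_3'$ and the second term in the decomposition of \eqref{e:UK}, not in the tightness estimates. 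Finally, one small technical ingredient you omit for the spatial increment bound \eqref{e:momentdiffZt}: as in \cite{parekh2017kpz}, one extends $Z_0$ to $\Z$ by imposing that $z\mapsto Z_0(z-1)-\mu Z_0(z)$ is odd, so that the extended initial datum still satisfies \eqref{e:momentdiffZ0} and the Robin boundary is respected by the extension.
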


\begin{proposition}[Identification of limit points]\label{p:limitpoints}
	Under the assumptions of Theorem~\ref{t:cvneareq}, any limit point $(\mathscr{Z}_s)_{s\in[0,T]}$ of $\left(\mathscr{Z}^\eps_s\right)_{s\in[0,T]}$ in the space $D([0,T],C(\R_+))$ satisfies the continuous martingale problem \eqref{e:contmartpb}--\eqref{e:contmartpb2}.
\end{proposition}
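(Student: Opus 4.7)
The plan follows the standard Bertini–Giacomin scheme \cite{bertini1997stochastic} adapted to the half-space Dirichlet setting along the lines of \cite{corwin2016open,parekh2017kpz}: we transfer a discrete martingale problem for $\mathscr{Z}^\eps$ into the continuous one of Definition~\ref{de:solution} on each subsequential limit. Fix $\phi\in\mathcal{H}$ and set $Y_s(x)=Z_{\eps^{-4}s}(x)$. Applying the discrete It\^o formula provided by Lemma~\ref{lem:quadratic} in the rescaled time yields the Dynkin decomposition
\begin{equation*}
(\mathscr{Z}^\eps_t,\phi) = (\mathscr{Z}^\eps_0,\phi) + \int_0^t D^\eps_s(\phi)\,ds + \tilde{N}^\eps_t(\phi), \qquad D^\eps_s(\phi) = \tfrac{\eps^{-2}}{2}\sum_{x\geq 0}\phi(\eps^2 x)\,\Delta^\mu Y_s(x),
\end{equation*}
where $\tilde{N}^\eps_t(\phi)$ is an $\mathcal{F}^\eps_t$-martingale whose quadratic variation derivative is read off Lemma~\ref{lem:quadeps}.

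The key algebraic identification is a summation by parts. Using crucially $\phi(0)=0$ and the Taylor expansion $\phi(-\eps^2)=-\eps^2\phi'(0)+O(\eps^4)$, one finds
\begin{equation*}
D^\eps_s(\phi) = \tfrac{1}{2}(\mathscr{Z}^\eps_s,\phi'') + \tfrac{1}{2}\phi'(0)\,\mathscr{Z}^\eps_s(0) + O(\eps^2),
\end{equation*}
and, because $\phi(0)^2=0$ kills the boundary contribution in Lemma~\ref{lem:quadeps},
\begin{equation*}
\tfrac{d}{ds}[\tilde{N}^\eps(\phi)]_s = ((\mathscr{Z}^\eps_s)^2,\phi^2) + \sum_{x\geq 1}\phi(\eps^2 x)^2\,\nabla^+Y_s(x)\nabla^-Y_s(x) + o(1).
\end{equation*}
The trivial $O(\eps^2)$ and $o(1)$ terms are absorbed thanks to the near-equilibrium moment bounds furnished by the tightness arguments of Section~\ref{s:tightnessneareq} and the compact support of $\phi$. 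Two genuinely non-trivial facts are needed to identify $\tilde{N}^\eps_t(\phi)$ with the expected limit. \emph{(i) Boundary contribution.} One must show $\int_0^t \E[|\mathscr{Z}^\eps_s(0)|]\,ds\to 0$. Plugging the discrete Duhamel representation of $Y_s(0)$ into the near-equilibrium bound on the initial data and into the It\^o isometry for the noise, one shows $\E[\mathscr{Z}^\eps_s(0)^2]\to 0$ for each $s>0$: indeed, after rescaling, $\p^\eps_{\eps^{-4}s}(0,\cdot)$ converges to the Dirichlet heat kernel $P^{\rm Dir}_s(0,\cdot)\equiv 0$, which is quantified by the image-sum representation~\eqref{eq:imagesmethodrepresentation} and the exponential moment bound~\eqref{e:hkbound1} of Section~\ref{sec:heatkernel}; dominated convergence then gives the claim. \emph{(ii) KPZ key cancellation.} The oscillatory term $\int_0^t\sum_{x\geq 1}\phi(\eps^2 x)^2\nabla^+Y_s\nabla^-Y_s\,ds$ tends to $0$ in $L^2$, by expanding $Y_s$ via Duhamel and invoking the quantitative cancellation estimates of Lemma~\ref{lem:cons}(ii)--(iii), exactly as in the proof of \cite[Corollary~5.4]{corwin2016open}.

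Granting (i) and (ii), one writes $N^\eps_t(\phi)=\tilde{N}^\eps_t(\phi)+o_{L^1}(1)$ and $Q^\eps_t(\phi)=\tilde{N}^\eps_t(\phi)^2-\int_0^t((\mathscr{Z}^\eps_s)^2,\phi^2)\,ds+o_{L^1}(1)$. Along any subsequence $\eps_n\to 0$ for which $\mathscr{Z}^{\eps_n}$ converges in $D([0,T],C(\R_+))$ to some $\mathscr{Z}$, the near-equilibrium moment bounds ensure uniform integrability, so the discrete martingales $\tilde{N}^{\eps_n}_t(\phi)$ and their compensated squares converge in $L^1$; since the martingale property is preserved by $L^1$ convergence, $N_t(\phi)$ and $Q_t(\phi)$ in Definition~\ref{de:solution} are martingales for $\mathscr{Z}$, which is Proposition~\ref{p:limitpoints}. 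The main obstacle is item~(i): the boundary estimate goes strictly beyond what tightness alone provides, and it is here that the sharp diverging-Robin heat kernel analysis of Section~\ref{sec:heatkernel} plays its essential role.
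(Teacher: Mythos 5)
Your overall strategy is the same as the paper's, so the analysis of the quadratic variation (your item~(ii), handled via Lemma~\ref{lem:cons} and the iterative scheme of \cite{corwin2016open}) is correct. The substantial divergence — and the gap — is in your item~(i), the treatment of the boundary drift term $\tfrac12\phi'(0)\int_0^t\mathscr{Z}^\eps_s(0)\,ds$. You propose to show directly that $\E\big[\mathscr{Z}^\eps_s(0)^2\big]\to 0$ for each $s>0$ and then conclude by dominated convergence. But the ingredient you cite for this, the bound \eqref{e:hkbound1}, gives only
\[\sum_{y\geq 0}\p^\eps_{\eps^{-4}s}(0,y)\,e^{a\eps^2 y}\leq C,\]
which is a \emph{uniform bound}, not a vanishing estimate. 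What you actually need is a statement of the form $\sum_{y\geq 0}\p^\eps_{\eps^{-4}s}(0,y)\,e^{a\eps^2 y}\to 0$ as $\eps\to 0$ for $s>0$, together with a matching control of the stochastic-convolution term in the discrete Duhamel formula (which requires more than ``It\^o isometry'': for $r$ close to $t$ the kernel $\p^\eps_{t-r}(0,\cdot)$ still has mass of order one, and the argument needs a splitting of the time integral combined with the small killing rate). Such an estimate is true — the Robin walk with killing rate of order $\eps$ at the origin does go extinct on the time scale $\eps^{-4}$, and the image-sum formula \eqref{eq:imagesmethodrepresentation} exhibits the required cancellation — but it is not established anywhere in Section~\ref{sec:heatkernel} (Proposition~\ref{prop:momentp} is tied to the specific convolution with $Z_0=\mu^x$, not to the general near-equilibrium data), and you do not supply the quantitative argument. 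As written, this step does not follow.

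The paper avoids this difficulty entirely by a test-function modification: it replaces $\phi$ by $\phi_\eps=\phi+\eps\phi'(0)\psi$, with $\psi$ smooth compactly supported, $\psi(0)=1$, $\psi'(0)=0$, so that $\phi_\eps(0)=\eps\phi'(0)$. A short Taylor computation then shows $e^{-\eps}\phi_\eps(0)-\phi_\eps(-\eps^2)=O(\eps^3)$, so the boundary drift term $R_2$ is $O(\eps)\int_0^t\mathscr{Z}^\eps_s(0)\,ds$ and vanishes using only the \emph{uniform} moment bound $\|\mathscr{Z}^\eps_s(0)\|_n\leq C$ already available from Lemma~\ref{l:Ztbounds-cont}; no vanishing of $\mathscr{Z}^\eps_s(0)$ is required, and the additional error terms $R_1,R_3$ introduced by the replacement are routine. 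This is strictly more economical: it turns a delicate boundary asymptotic into an algebraic cancellation, and it is worth internalizing as the device that makes the Dirichlet case parallel to the Robin case of \cite{corwin2016open,parekh2017kpz}. (Minor point in your favour: if one could keep the unmodified $\phi\in\mathcal{H}$, the quadratic-variation boundary term at $x=0$ would be exactly zero since $\phi(0)^2=0$; with the paper's $\phi_\eps$, this term is $R_3'=O(\eps^7)$, which is still trivially negligible.)
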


The rest of the section is devoted to the proof of Proposition \ref{p:tightness} and Proposition \ref{p:limitpoints}. 


\subsection{Tightness: proof of Proposition~\ref{p:tightness}}\label{s:tightnessneareq}
The \oo{proof relies on} the following lemma.  

\begin{lemma}\label{l:Ztbounds-cont}
	Fix $T>0$ and assume that the initial condition $\mathscr{Z}_0^\eps \in C(\R_+)$  is near-equilibrium (Definition \ref{de:near-eq-cont}). Then for all $n\in\N$ and $\alpha\in[0,\frac12)$, there exists $C=C(\alpha,n,T),a\in\R_+$ such that for all $\eps$ \mm{small enough}, for any $x,y\in\R_+$, $s,s'\in[0,T]$,
	\begin{align}
		\|\scZ_s(x)\|_{2n}&\leq Ce^{au},\label{e:momentZt}\\
		\|\scZ_s(x)-\scZ_s(y)\|_{2n}&\leq C|x-y|^\alpha e^{a(x+y)}, \qquad \mm{\text{if } s\ge \oo{\eps^4}}\label{e:momentdiffZt}\\
		\|\scZ_s(x)-\scZ_{s'}(x)\|_{2n}&\leq C \left(\eps^{2\alpha}\vee |s-s'|^{\alpha/2}\right)e^{2ax}.\label{e:momenttempsZt}
	\end{align}
\end{lemma}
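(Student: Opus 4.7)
The strategy is to follow the proofs of the analogous bounds in \cite{corwin2016open,parekh2017kpz}, which rest on the Duhamel representation
\begin{equation*}
Z_t(x) = \sum_{y\in\N} \p_t^\eps(x,y)\, Z_0(y) + N_t(x), \qquad N_t(x):= \int_0^t \sum_{y\in\N} \p_{t-s}^\eps(x,y)\, dM_s(y),
\end{equation*}
a direct consequence of Lemma \ref{lem:quadratic}. For the moment bound \eqref{e:momentZt}, the deterministic term is handled using the heat kernel bound \eqref{e:hkbound1} together with the near-equilibrium assumption \eqref{e:momentZ0}. For $N_t(x)$, Burkholder--Davis--Gundy and the orthogonality of $M_\cdot(y),M_\cdot(y')$ for $y\neq y'$ reduce matters to controlling $\int_0^t \sum_y \p_{t-s}^\eps(x,y)^2\, d[M(y)]_s$. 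Inserting the expression from Lemma~\ref{lem:quadeps} yields three contributions: a diagonal term of order $\eps^2\sum_y \p_{t-s}^\eps(x,y)^2 Z_s(y)^2$, bounded via $\sum_y \p_r^\eps(x,y)^2 = \p_{2r}^\eps(x,x) \leq C r^{-1/2}$ from \eqref{eq:boundheatkernel}; a gradient term $\sum_y \p_{t-s}^\eps(x,y)^2\,\nabla^+Z_s(y)\nabla^-Z_s(y)$, which after discrete summation by parts becomes $\sum_y \nabla^+\p_{t-s}^\eps(x,y)\,\nabla^-\p_{t-s}^\eps(x,y)\,Z_s(y)^2$ and is controlled through the key cancellation \eqref{e:hkboundcancel1}; and a $o(\eps^2)Z_s^2$ remainder, treated like the first. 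Rescaling $s\mapsto \eps^{-4}s$ and applying Gronwall's inequality to $\sup_{s\leq t,y}\|Z_s(y)\|_n^2 e^{-2a\eps^2 y}$ gives \eqref{e:momentZt}.

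For the spatial H\"older estimate \eqref{e:momentdiffZt}, one repeats the analysis for the difference $Z_t(x)-Z_t(x')$: the deterministic contribution is controlled using \eqref{e:hkbound2} and the near-equilibrium difference bound \eqref{e:momentdiffZ0}, while the martingale part is handled by the same BDG scheme with $\p_{t-s}^\eps(x,\cdot)-\p_{t-s}^\eps(x',\cdot)$ in place of $\p_{t-s}^\eps(x,\cdot)$, now using the already-proved \eqref{e:momentZt} to bound $\|Z_s(y)\|_{2n}$. For the time regularity \eqref{e:momenttempsZt} one uses
\begin{equation*}
Z_{t'}(x) - Z_t(x) = \sum_{y\in\N}\big(\p_{t'-t}^\eps(x,y)-\mathbf{1}_{y=x}\big)Z_t(y) + \big(N_{t'}(x)-N_t(x)\big),
\end{equation*}
estimating the first term through a Kolmogorov-type time-continuity bound for $\p^\eps$ derived from \eqref{e:hkbound4} (which produces the $|t-t'|^{\alpha/2}$ factor), and the martingale increment via BDG together with the bounds already obtained. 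The floor $\eps^{2\alpha}$ in $\eps^{2\alpha}\vee|t-t'|^{\alpha/2}$ reflects the microscopic lattice scale, below which $\p^\eps$ is constant in time.

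The main obstacle in executing this program is the gradient contribution to the quadratic variation near the boundary. Under our scaling $\mu=e^{-\eps}$ (rather than $1-A\eps^2$ as in \cite{corwin2016open,parekh2017kpz}), the summation by parts used for the gradient term produces boundary contributions of order $\eps$ instead of $\eps^2$, and Lemma~\ref{lem:quadeps} exhibits an additional boundary term $-\eps Z_s(0)\nabla^+ Z_s(0)$ in $d[M(0)]_s$ that is absent in previous works. Both are absorbed by pairing $\nabla^+ Z_s$ with $\nabla^+\p^\eps$ and invoking the refined cancellations \eqref{e:hkboundcancel1}--\eqref{e:hkboundcancel2}; these half-line analogues of Bertini--Giacomin's cancellation remain available here because $\p_t^R$ is monotone in $\mu$, allowing the quantitative bounds of \cite{parekh2017kpz} to transfer verbatim to our setting.
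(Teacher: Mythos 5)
Your plan for the moment bound \eqref{e:momentZt} takes an unnecessary and, as written, incorrect detour.  After applying BDG you want to control $\int_0^t\sum_y \p_{t-s}^\eps(x,y)^2\, d[M(y)]_s$; you then split $d[M(y)]_s$ via Lemma~\ref{lem:quadeps} into a diagonal piece, a gradient piece $\nabla^+Z_s(y)\nabla^-Z_s(y)$, and a remainder, and claim that the gradient piece can be summed by parts to give $\sum_y \nabla^+\p_{t-s}^\eps(x,y)\,\nabla^-\p_{t-s}^\eps(x,y)\,Z_s(y)^2$, which you then control with \eqref{e:hkboundcancel1}.  That summation-by-parts identity is not valid: $\sum_y a(y)\nabla^+b(y)\nabla^-b(y)$ does not transform into $\sum_y \nabla^+a(y)\nabla^-a(y)\,b(y)^2$, and the bookkeeping fails even at the level of scaling (the kernel appears squared on the left but not on the right, so the two expressions are not of the same order in $t$).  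In fact no such manipulation is needed.  The raw quadratic variation from Lemma~\ref{lem:quadratic} satisfies the pointwise bound $d[M(x)]_t\le C\eps^2 Z_t(x)^2\,dt$ (including at $x=0$, since $(p-q)^2/p$ and $(p-q)^2/q$ are both $O(\eps^2)$ and the occupation factors are bounded by one); this is exactly what is packaged in Lemma~\ref{l:iteration}, which together with \eqref{e:hkbound1}, \eqref{e:momentZ0} and a Gronwall iteration gives \eqref{e:momentZt} directly.  The refined decomposition of Lemma~\ref{lem:quadeps} and the cancellation identities \eqref{e:hkboundcancel1}--\eqref{e:hkboundcancel2} are needed later, in the identification of limit points (Proposition~\ref{p:limitpoints} and Lemma~\ref{l:U}), not for these a priori moment estimates.

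Consequently, your closing paragraph misidentifies the ``main obstacle'': the extra boundary term $-\eps Z_s(0)\nabla^+Z_s(0)$ is relevant when proving that the limiting quadratic variation takes the SHE form, but it plays no role in Lemma~\ref{l:Ztbounds-cont}, because the crude pointwise bound on $d[M(0)]_t$ already suffices there.  One further small omission: for the spatial increment \eqref{e:momentdiffZt} the paper extends $Z_0$ to $\Z$ by requiring $z\mapsto Z_0(z-1)-\mu Z_0(z)$ to be odd, which makes the Robin kernel difference estimate \eqref{e:hkbound2} applicable to the deterministic term; your sketch leaves this step implicit.  Your decomposition for the time increment \eqref{e:momenttempsZt} is fine in spirit.
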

These estimates, together with Arzela-Ascoli's Theorem, imply Proposition~\ref{p:tightness} (see \cite[Chapter 3]{billingsley1968convergence}).  This implication is detailed in \cite{bertini1997stochastic}, and hinted at in later papers such as \cite{corwin2016open,parekh2017kpz}. Note that, in constrast with Lemmata 4.2 and 4.3 in \cite{bertini1997stochastic}, we have a small restriction on the value of $s$ in \eqref{e:momentdiffZt}, which is a consequence of the use of Lemma~\ref{l:iteration} below. Since the question was asked by an anonymous referee, let us explain why this does not affect the proof of tightness (as is already the case in \textit{e.g.}\@ \cite{corwin2016open,parekh2017kpz}). The strategy of \cite{bertini1997stochastic} is to use Kolmogorov's Lemma to deduce from Lemma~\ref{l:Ztbounds-cont} the existence of a Hölder version of the process. However, since our microscopic process has jumps, it cannot be Hölder in time. Therefore \cite{bertini1997stochastic} consider $\bar\scZ$, which is a version of the interpolation of $\scZ$ at times in $\eps^4\N$ with the desired regularity; for this step our restriction on $s$ is no obstacle. Then it remains to control the error between $\bar\scZ$ and $\scZ$: this uses Lemma~\ref{l:oriane} and only requires the moment bound \eqref{e:momentZt} from Lemma~\ref{l:Ztbounds-cont} (see the proof of Lemma 4.7 in \cite{bertini1997stochastic}).

In order to prove Lemma \ref{l:Ztbounds-cont}, the starting point is to write the Duhamel equation for $Z$ which is a consequence of Lemma~\ref{lem:quadratic}: for any $t\ge 0$,
	\begin{equation}\label{eq:ZDuhamel}
		Z_t(k)=\sum_{\ell=0}^\infty \p^\eps_t(k,\ell)Z_0(\ell)+\int_0^t\sum_{\ell=0}^\infty \p^\eps_{t-s}(k,\ell)dM_s(\ell).
	\end{equation}


	 We will then use \oo{two} main ingredients:
	\begin{enumerate}
		\item[(1)] the bounds on the heat kernel $\p^\eps$ which are proved in Lemma \ref{lem:bounds} and Lemma \ref{lem:cons} ; 
		\item[(2)] an $L^{2n}$-bound on the integral terms contained in the second term in \eqref{eq:ZDuhamel}, similar to the one stated in \cite[Lemma 5.3]{parekh2017kpz}. This uses Lemma \ref{l:iteration} and allows the iteration procedure described in the proof below.
	\end{enumerate}

\begin{proof}[Proof of  Lemma \ref{l:Ztbounds-cont}]

The proof is inspired by \cite[Proof of Proposition 5.4]{parekh2017kpz}. We highlight here its main steps and the adjustments we need to make, notably the fact that we need the killing estimate \eqref{eq:killingestimate} to prove the Hölder properties.
 We denote the microscopic time variables by $t=\eps^{-4} s, t'=\eps^{-4}s'$.
  
 The first bound \eqref{e:momentZt} in Lemma \ref{l:Ztbounds-cont} is obtained by an iteration argument using Duhamel formula \eqref{eq:ZDuhamel}. The $L^{2n}$ norm of the first term is bounded by $Ce^{ak}$ thanks to the triangular inequality, together with \eqref{e:momentZ0} and \eqref{e:hkbound1} with $a_1=0, a_2=a$. 
 
 Then, for $t\geq 1$, we can apply Lemma~\ref{l:iteration} to the second term. After the same manipulations as in \cite[Proof of Proposition 5.4]{parekh2017kpz}, using again \eqref{e:hkbound1} and defining  
 \[
  [Z_t]_{2n}:=\sup_{k\in\N}e^{-a\eps^2 k}\|Z_t(k)\|_{2n},
 \]
 we obtain, when $t\ge 1$,
 \[
  [Z_t]_{2n}^2\leq C+C\eps^2\int_0^t ds (t-s)^{-1/2}[Z_s]_{2n}.
 \]
 Using Lemma~\ref{l:oriane} shows that this holds in fact also for $t\leqslant 1$. We can now perform the same iteration as in \cite[Proof of Proposition 5.4]{parekh2017kpz} to conclude the proof of \eqref{e:momentZt}.

For the second bound \eqref{e:momentdiffZt}, write as a consequence of \eqref{eq:ZDuhamel}
\begin{equation}\label{e:Ztdiff1}
	Z_t(k)-Z_t(\ell)=\sum_{j=0}^\infty \left[\p^\eps_t(k,j)-\p^\eps_t(\ell,j)\right]Z_0(j)+\int_0^t\sum_{j=0}^\infty \left[\p^\eps_{t-r}(k,j)-\p^\eps_{t-r}(\ell,j)\right]dM_r(j).
\end{equation}
To control the first sum, we can proceed as in \cite{parekh2017kpz} and extend $Z_0$ into a function $\widetilde Z_0$ over $\Z$ by imposing that $j\mapsto Z_0(j-1)-\mu Z_0(j)$ is odd. Note that $Z_0$ now depends implicitly on $\eps$. Then it is easy to check  that $\tilde Z_0$ still satisfies \eqref{e:momentdiffZ0} (possibly changing $C$), and $\sum_{j=0}^\infty \p^\eps_t(k,j)Z_0(j)=\sum_{j\in\Z}p_t(k-j)\widetilde Z_0(j)$. This allows to bound the $L^{2n}$-norm of the first term by 
\[
 \sum_{j\in\Z}p_t(k-j)\|\widetilde Z_0(j)-\widetilde Z_0(j+\ell-k)\|_{2n}\leqslant C(\eps^2|k-\ell|)^\alpha e^{a\eps^2(k+\ell)},
\]
using \eqref{e:momentdiffZ0} and the analog of \eqref{e:hkbound1} for the standard heat kernel.

To bound the $L^{2n}$--norm of the second term, we use again Lemma~\ref{l:iteration} with $F(r,j)=\p^\eps_{t-r}(k,j)-\p^\eps_{t-r}(\ell,j)$. By \eqref{e:hkbound2} and \eqref{e:hkbound3}, similarly as in \cite{parekh2017kpz}, \mm{for all $v \in (0,1)$,}
\[
 \bar F(r,j)^2\leq C(t-r)^{-(1+\mm{v})/2}|k-\ell|^\mm{v}\left(\p^\eps_{t-r+1}(k,j)+\p^\eps_{t-r+1}(\ell,j)\right).
\]
Then, using \eqref{e:hkbound1} and \eqref{e:momentZt} and following the same steps as \cite{parekh2017kpz}, we get that the $L^{2n}$--norm of the second term in \eqref{e:Ztdiff1} is bounded by $C(\eps^2|k-\ell|)^{\mm{v}/2}e^{a\eps^2(k+\ell)}$.  \mm{Choosing $v=2\alpha$, for any $\alpha \in (0,\frac12)$, we finally get \eqref{e:momentdiffZt}}.

For the last bound \eqref{e:momenttempsZt}, we need a new ingredient with respect to \cite{parekh2017kpz}'s proof. We start this time from the Duhamel equation between times $s$ and $s'$ (again a consequence of Lemma~\ref{lem:quadratic}): assume first that $t-t' \ge 1$, then \begin{multline}
 Z_t(k)-Z_{t'}(k)=\sum_{j=0}^\infty \p^\eps_{t-t'}(k,j)\left(Z_{t'}(j)-Z_{t'}(k)\right)+\bigg(\sum_{j=0}^\infty\p^\eps_{t-t'}(k,j)-1\bigg)Z_{t'}(k)\\+\int_{t'}^t\sum_{j=0}^\infty \p^\eps_{t-r}(k,j)dM_r(j). \label{eq:mult}
\end{multline} 
Using \eqref{e:momentdiffZt} and \eqref{e:hkbound1} with $a_1=1, a_2=a$, the $L^{2n}$--norm of the first term is bounded as in \cite{parekh2017kpz}, by
\[
 C\eps^{2\alpha}(t-t')^{\alpha/2}e^{2a\eps^2 k}.
\]
\mm{The last term in \eqref{eq:mult} is bounded \oo{by the same quantity (up to the constant),} like in \cite{parekh2017kpz}, thanks to Lemma \ref{l:iteration} with $F(r,j)=\p^\eps_{t-r}(k,j)$ together with \eqref{eq:it2} and  \eqref{e:momentZt}.}

The second term is the reason why we proved the killing estimate \eqref{eq:killingestimate}: combined with \eqref{e:momentZt}, it gives that the $L^{2n}$--norm of the second term is bounded by $Ce^{-\eps^2 k/\sqrt{\eps^4(t-t')}}$. Let us denote the macroscopic variable $x=\eps^2 k$ and recall that $t=\eps^{-4}s$ and $t'=\eps^{-4}s$.

 Let us now split cases. If $x \geqslant \left(s-s'\right)^{\sqrt{\alpha/2}}$,  the above estimates combined yield (adjusting the constant)
\[
 \big\|\eps^{-2}(Z_t(k)-Z_{t'}(k))\big\|_{2n}\leqslant  C \Big(  \varepsilon^{2\alpha} \oo{ |t-t'|^{\alpha/2}} + e^{-(\eps^4(t-t'))^{-\left(1/2-\sqrt{\alpha/2}\right)}}\Big).
\]
Since $\sqrt{\alpha/2}<1/2$, this gives \eqref{e:momenttempsZt} in this first case. If instead $x<\left(s-s'\right)^{\sqrt{\alpha/2}}$, let us define $k':=\lceil\eps^{-2}(s-s')^{\sqrt{\alpha/2}}\rceil.$ By the triangle inequality,
\begin{align*}
 \big\|\eps^{-2}(Z_t(k)-Z_{t'}(k))\big\|_{2n}\leqslant&\; \big\|\eps^{-2}(Z_t(k)-Z_{t}(k'))\big\|_{2n}+\big\|\eps^{-2}(Z_t(k')-Z_{t'}(k'))\big\|_{2n}\\
 &\qquad+\ \big\|\eps^{-2}(Z_{t'}(k')-Z_{t'}(k))\big\|_{2n} \vphantom{\bigg(}\\
 \leqslant& \; C\varepsilon^{2\alpha} \oo{ |t-t'|^{\alpha/2}}+2C(\eps^2k')^{\sqrt{\alpha/2}},
\end{align*}
 where we have used \eqref{e:momenttempsZt} applied to $k'$ (which we just proved holds) and \eqref{e:momentdiffZt} twice, with exponent $\sqrt{\alpha/2}<1/2$ (since {$0 < k \leq k'$}, $|k-k'|\leqslant k'$). It now remains to replace $k'$ by its expression. We get 
 \[
  \eps^2 k'\leq \eps^2+\left(s-s'\right)^{\sqrt{\alpha/2}}.
 \]
 Since $|t-t'|\geqslant 1$, $\eps^2\leq\left(\eps^4(t-t')\right)^{1/2}\leqslant C\left(\eps^4(t-t')\right)^{\sqrt{\alpha/2}}$, where the constant is chosen so that $u^{1/2}\leq C u^{\sqrt{\alpha/2}}$ for all $u\in [0,T]$.
 
  Putting these estimates together concludes the proof for $t-t'\ge 1$. The case $t-t'<1$ can be handled using Lemma~\ref{l:oriane}. \end{proof}

\subsection{Identification of limit points: proof of Proposition~\ref{p:limitpoints}}
\label{s:limitpoints}
\subsubsection{Discrete martingale problem}

Let us denote by $\Delta^\eps:=\Delta^{\mu}$ the discrete Laplacian with boundary condition defined in \eqref{eq:deltamu} with $\mu=e^{-\eps}$.
We also introduce, for any $\phi,\psi:\R_+\to\R$ which are square summable, the following notation:
\begin{equation}
	(\psi, \phi)_\eps:=\eps^2\sum_{k=0}^\infty\phi(\eps^2k)\psi( k).
\end{equation}
\mm{We will prove that the limit points of the rescaled process satisfy in the limit the martingale problem stated in Definition \ref{de:solution}.  Let us take $\phi \in \oo{\mathcal C_c^\infty(\R)}$.}
From Lemma  \ref{lem:quadratic}, 
\begin{align}
	N^\eps_t(\phi):=(Z_{\eps^{-4}t},\phi)_\eps-(Z_0,\phi)_\eps-\frac{1}{2}\int_0^{\eps^{-4} t} (\Delta ^\eps Z_s,\phi)_\eps \; ds
\end{align} is a martingale. Let us compute
\begin{align*}
	\eps^{-2}(\Delta ^\eps Z_s,\phi)_\eps 
	&=\sum_{k=0}^\infty Z_s(k)\Big[\phi(\eps^2(k+1))+\phi(\eps^2(k-1))-2\phi(\eps^2k)\Big] +Z_s(-1)\phi(0)-Z_s(0)\phi(-\eps^2) \vphantom{\sum_{k=0}^\infty}\\
	&=\sum_{k=0}^\infty Z_s(k)\eps^4\phi''(\eps^2k)  \\ & \quad +\sum_{k=0}^\infty Z_s(k)\left[\Delta\phi(\eps^2\cdot)(k)-\eps^4\phi''(\eps^2k)\right]+\sqrt{\frac{q}{p}}Z_s(0)\phi(0)-Z_s(0)\phi(-\eps^2),\\
	&=\eps^4\sum_{k=0}^\infty \scZ_{\eps^4s}(\eps^2k)\phi''(\eps^2k)  \\ & \quad + \sum_{x=0}^\infty \scZ_{\eps^4s}(\eps^2k)\left[\Delta\phi(\eps^2\cdot)(k)-\eps^4\phi''(\eps^2k)\right] +  \scZ_{\eps^4s}(0)\left[\sqrt{\frac{q}{p}}\phi(0)-\phi(-\eps^2)\right].
\end{align*}
Therefore $N_t^\eps(\phi)$ can be rewritten as
\begin{align}
	N_t^\eps(\phi)=&\;\eps^2\sum_{k=0}^\infty \phi(\eps^2k)\scZ_{t}(\eps^2 k)-\eps^2\sum_{k=0}^\infty \phi(\eps^2k)\scZ_{0}(\eps^2 k)-\frac{1}{2}\int_0^t\eps^2\sum_{k=0}^\infty\scZ_s(\eps^2k)\phi''(\eps^2k)ds\notag\\&-\frac{1}{2}\eps^{-2}\left[\sqrt{\frac{q}{p}}\phi(0)-\phi(-\eps^2)\right]\int_0^t\scZ_s(0)ds+\mathfrak{E}_1(\eps),
	\label{eq:martingaleepsilon}
\end{align}
where the error term is
\begin{equation}
	\mathfrak{E}_1(\eps):=-\frac{1}{2}\eps^{-2}\sum_{k=0}^\infty\left[\Delta^\eps \phi(\eps^2\cdot)(k)-\eps^4\phi''(\eps^2k)\right]\int_0^t\scZ_s(\eps^2k)ds.
\end{equation}
\mm{Now,} let us fix $\psi:\R\rightarrow\R_+$ smooth, compactly supported, such that $\psi(0)=1$ and $\psi'(0)=0$, and define
\begin{equation}\label{e:defphieps}\phi_\eps=\phi+\eps\phi'(0)\psi,\end{equation} so that $\phi_\eps(0)=\eps \phi_\eps'(0)$. From the previous computation \eqref{eq:martingaleepsilon}, the following is a martingale:
\begin{multline*} 
	N_t^\eps(\phi_\eps)=\eps^2\sum_{k=0}^\infty \phi(\eps^2k)\scZ_{t}(\eps^2 k)  - \eps^2\sum_{k=0}^\infty \phi(\eps^2k)\scZ_{0}(\eps^2 k)  \\  - \frac{\eps^2}{2}\int_0^t\sum_{k=0}^\infty\scZ_s(\eps^2k)\phi''(\eps^2k)ds+R_1+R_2+R_3 \mm{+R_4},
	\label{eq:martingaleepsilon2}
\end{multline*}
where 
\begin{align}
	R_1&:=-\frac{1}{2}\eps^{-2}\sum_{k=0}^\infty\left[\Delta^\eps\phi_\eps(\eps^2\cdot)(k)-\eps^4\phi_\eps''(\eps^2k)\right]\int_0^t\scZ_s(\eps^2k)ds\\
	R_2&:=-\frac{1}{2}\eps^{-2}\left[e^{-\eps}\phi_\eps(0)-\phi_\eps(-\eps^2)\right]\int_0^t\scZ_s(0)ds,\\
	R_3&:=\eps^3\phi'(0)\sum_{k=0}^\infty\psi(\eps^2 k)\left[\scZ_t(\eps^2k)-\scZ_0(\eps^2 k)\right]  \\
	 \mm{R_4} & := \mm{- \frac{\eps^3}{2} \phi'(0) \int_0^t \sum_{k=0}^\infty \scZ_s(\eps^2k) \psi''(\eps^2 k)ds.}
  \end{align}
Moreover, the quadratic variation of $N_t^\eps(\phi_\eps)$ is given (see Lemma~\ref{lem:quadratic} and Lemma \ref{lem:quadeps}) by:
\begin{align}
	[N^\eps(\phi_\eps)]_t=\; &\eps^4\sum_{k=1}^\infty\phi^2_\eps(\eps^2k)\int_0^{\eps^{-4}t}\left[\eps^2Z_s(k)^2+\nabla^+Z_s(k)\nabla^-Z_s(k)+o(\eps^2)Z_s(k)^2\right]ds\\
	&+\eps^4\phi^2_\eps(0)\int_0^{\eps^{-4}t}\left[\eps^2Z_s(0)^2\oo{-}\eps Z_s(0)\nabla^+Z_s(0)+o(\eps^2)Z_s(0)^2\right]ds.
\end{align}
In order to conclude, we prove in the next section that $R_1,R_2,R_3,\oo{R_4}$ vanish in probability as $\eps\rightarrow 0$, which then establishes that $N_t(\phi)$ (defined in \eqref{e:contmartpb}) is a martingale for any $\mathscr{Z}$ limit point of $\scZ$. Then, we verify \eqref{e:contmartpb2}.

\subsubsection{Proof of Proposition \ref{p:limitpoints}}
In order to prove that $R_1,R_2,R_3,\oo{R_4}$ vanish in probability, we estimate their $\|\cdot\|_n$-norms. 
Let us start with $R_2$, where we will see why we chose $\phi_\eps$ as in \eqref{e:defphieps}.

\begin{enumerate}
	\item[($R_2$)] For any $n\in\N$, by Lemma~\ref{l:Ztbounds-cont},
	\begin{align}
		\|R_2\|_n&\leq C\eps^{-2}t\left|e^{-\eps}\eps\phi'(0)-\phi(-\eps^2)-\eps\phi'(0)\psi(-\eps^2)\right|\\
		&\leq C\eps^{-2}t\left|(1-\eps)\eps\phi'(0)+\eps^2\phi'(0)-\eps\phi'(0)+o(\eps^2)\right|=o(1).
	\end{align}
	Therefore $R_2$ vanishes in any $L^n$, $n\in\N$.
	
	\medskip
	
	\item[($R_3$, \mm{$R_4$})] Moreover, the same holds for $R_3$ \mm{and $R_4$}, again thanks to Lemma~\ref{l:Ztbounds-cont}, and because \mm{both} series  $\eps^2\sum_{k=0}^\infty \psi(\eps^2k)e^{2a\eps^2k}$ \mm{and $\eps^2\sum_{k=0}^\infty \psi''(\eps^2k)e^{2a\eps^2k}$} converge to, respectively,  $\int_{\R_+}\psi(u)e^{2au}du$ \mm{and $\int_{\R_+}\psi''(u)e^{2au}du$} \mm{which are both finite}.
	\medskip
	
	\item[($R_1$)] Let us now consider $R_1$. By Lemma~\ref{l:Ztbounds-cont}, it is enough to show that, for all $k\in\N^*$, 
	\[\left|\Delta^\eps\phi_\eps(\eps^2\cdot)(k)-\eps^4\phi_\eps''(\eps^2k)\right|=o(\eps^4),\] and \[
	\left|\Delta^\eps\phi_\eps(\eps^2\cdot)(0)-\eps^4\phi_\eps''(0)\right|=o(\eps^2),\]
	which can be checked by {Taylor expansion, using that $|\phi'''|$ is bounded} thanks to the compactness of the support of $\phi$.
\end{enumerate}

It remains to show that any limit point $\mathscr{Z}$ of $\scZ$ satisfies \eqref{e:contmartpb2}. Let us rewrite 
\begin{align}
	[N^\eps(\phi_\eps)]_t&=\int_0^t\eps^2\sum_{k=0}^\infty\phi^2(\eps^2k)\scZ_s(\eps^2k)^2ds+R_1'+R_2'+R_3'+R_4',
\end{align}
where
\begin{align}
	R_1'&=\int_0^t\eps^2\sum_{k=0}^\infty[\phi_\eps^2(\eps^2k)-\phi^2(\eps^2k)]\scZ_s(\eps^2k)^2ds,\\
	R'_2&=\eps^4\int_0^{\eps^{-4}t}\sum_{k=1}^\infty\phi_\eps^2(\eps^2k)\nabla^+Z_s(k)\nabla^-Z_s(k)ds,\\
	R'_3&=-\eps^5\int_0^{\eps^{-4}t}\phi_\eps^2(0)Z_s(0)\nabla^+Z_s(0)ds,\\
	R'_4&=o(1)\int_0^t\eps^2\sum_{k=0}^\infty\phi^2(\eps^2k)\scZ_s(\eps^2k)^2ds.
\end{align}
We have to show that $R_1',R_2',R_3',R_4'$ {vanish in $L^2$} as $\eps\rightarrow 0$. 

\begin{enumerate}
	\item[($R_1'$)] For the first term, the result follows from the bound $\|\phi_\eps^2-\phi^2\|_\infty=\mathcal{O}(\eps)$, the fact that $\phi,\phi_\varepsilon$ have compact support,  and from \eqref{e:momentZt}. 
	\medskip
	
	\item[($R_3'$ and $R_4'$)] Both terms $R_3'$ and $R_4'$ are also controlled with \eqref{e:momentZt} and \eqref{e:momentdiffZt}. 
	\medskip
	
	\item[($R_2'$)] 
	Similarly to \cite{corwin2016open} and \cite[Proof of Theorem 5.7]{parekh2017kpz}, we split $R'_2$ in two parts: write $R'_2=r_1+r_2$, where
	\begin{align}
		r_1&=\eps^4\int_0^{\eps^{-3}{\wedge \eps^{-4}t}}\sum_{k=1}^\infty\phi_\eps^2(\eps^2k)\nabla^+Z_s(k)\nabla^-Z_s(k)ds\\
		r_2&=\eps^4\int_{\eps^{-3}{\wedge \eps^{-4}t}}^{\eps^{-4}t}\sum_{k=1}^\infty\phi_\eps^2(\eps^2k)\nabla^+Z_s(k)\nabla^-Z_s(k)ds.
	\end{align}
	By \eqref{e:momentdiffZt}, $\|\nabla^\pm Z_s(k)\|_2\leq C\eps^{2\alpha}e^{2a\eps^2k}$ for any $\alpha<\frac12$ and some $C,a>0$. Therefore, we can bound 
	\begin{align}
		\|r_1\|_2\leq C\eps^4\eps^{-3}\eps^{4\alpha}\sum_{k=1}^\infty \phi_\eps^2(\eps^2k)e^{4a\eps^2k}\leq C'\eps^{4\alpha-1},
	\end{align}
	which goes to $0$ if we choose for instance $\alpha=1/3$. On the other hand, { if $\eps^{-3}\leq \eps^{-4}t$ {(else, $r_2=0$)}}, $\mathbb{E}[r_2^2]$ is equal to 
	\begin{multline*}
		2\eps^8\int_{\eps^{-3}}^{\eps^{-4}t}\int_{\eps^{-3}}^s\sum_{k,\ell=1}^\infty \phi_\eps^2(\eps^2k)\phi_\eps^2(\eps^2\ell)\mathbb{E}\big[\nabla^+Z_s(k)\nabla^-Z_s(k)\nabla^+Z_r(\ell)\nabla^-Z_r(\ell)\big]dr ds\\
		=2\eps^8\int_{\eps^{-3}}^{\eps^{-4}t}\int_{\eps^{-3}}^s\sum_{k,\ell=1}^\infty \phi_\eps^2(\eps^2k)\phi_\eps^2(\eps^2\ell)\mathbb{E}\big[\nabla^+Z_r(\ell)\nabla^-Z_r(\ell)U(k,r,s)\big]dr ds,
	\end{multline*}
	where 
	\begin{equation}
		U(k,r,s):=\mathbb{E}\Big[\nabla^+Z_s(k)\nabla^-Z_s(k)|\mathcal F_r\Big], \qquad \mathcal F_r:=\sigma\left({Z_{r'}(j); j\in\N, r'\leq r}\right).
	\end{equation}
	Therefore, 
	\begin{align}
		\mathbb{E}[r_2^2]&\leq C\eps^8\eps^{4\alpha}\int_{\eps^{-3}}^{\eps^{-4}t}\int_{\eps^{-3}}^s\sum_{k,\ell=1}^\infty \phi_\eps^2(\eps^2k)\phi_\eps^2(\eps^2\ell)e^{4a\eps^2\ell}\mathbb{E}\big[|U(k,r,s)|\big]drds\\
		&\leq C\eps^{6+4\alpha}\int_{\eps^{-3}}^{\eps^{-4}t}\int_{\eps^{-3}}^s\sum_{k=1}^\infty \phi_\eps^2(\eps^2k)\mathbb{E}\big[|U(k,r,s)|\big]drds,
	\end{align}
	{where the value of $C$ changed between the two lines.} We will prove in Lemma \ref{l:U} below an estimate on $\mathbb E\big[|U(k,r,s)|\big]$  which shows that 
	\begin{align}
		\mathbb{E}[r_2^2]&\leq C\eps^{4+6\alpha}\int_{\eps^{-3}}^{\eps^{-4}t}\int_{\eps^{-3}}^s\frac{1}{\sqrt{s-r}}dr ds\leq C\eps^{1+6\alpha}.
	\end{align}
\end{enumerate}
Therefore, $R'_i$ goes to $0$ in $L^2$ for $i=1,...,4$, and both properties \eqref{e:contmartpb} and \eqref{e:contmartpb2} are satisfied, which  conclude the proof of Proposition \ref{p:limitpoints}. We now prove the last needed estimate:
\begin{lemma}\label{l:U}
	For any $\alpha<\frac12$, there exist $a,C=C(\alpha,t)>0$ such that, for any $k\in\N^*$ and if  $\eps^{-3}\leq r\leq s\leq \eps^{-4}t$, 
	\begin{equation}
		\mathbb E\big[|U(k,r,s)|\big]\leq C\eps^{2\alpha}\frac{e^{a\eps^2k}}{\sqrt{s-r}}.
	\end{equation}
\end{lemma}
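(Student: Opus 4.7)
The plan is to exploit the Duhamel decomposition on the time interval $[r,s]$ together with It\^o isometry, writing $\nabla^\pm Z_s(x)$ as an $\mathcal F_r$-measurable piece plus a martingale piece, and then estimate each contribution to $U(x,r,s)$ separately. More precisely, from Lemma \ref{lem:quadratic} and the semigroup property associated with $\Delta^\eps$, one has, for $r \leq s$,
\[
Z_s(x) = \sum_{y=0}^\infty \p^\eps_{s-r}(x,y)\, Z_r(y) + \int_r^s \sum_{y=0}^\infty \p^\eps_{s-r'}(x,y)\, dM_{r'}(y),
\]
and taking $\nabla^\pm$ in the $x$ variable yields $\nabla^\pm Z_s(x) = D^\pm(x,r,s) + E^\pm(x,r,s)$, where $D^\pm$ is $\mathcal F_r$-measurable and $E^\pm$ is a martingale in $s$, null at $s=r$. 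The cross terms $\mathbb E[D^\pm E^\mp\mid\mathcal F_r]$ vanish, so
\[
U(x,r,s) = D^+(x,r,s)\, D^-(x,r,s) + \mathbb E\big[E^+(x,r,s)\, E^-(x,r,s)\mid \mathcal F_r\big].
\]

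\textbf{Bounding the deterministic piece.} By discrete summation by parts, exploiting the symmetry $\p^\eps_{s-r}(x,y) = \p^\eps_{s-r}(y,x)$ in the bulk and carefully treating the boundary terms produced by the Robin condition $\mu = e^{-\eps}$, one rewrites $D^\pm(x,r,s)$ as a convolution of $\p^\eps_{s-r}$ with a discrete gradient of $Z_r$, up to an $o(1)$ boundary error. Then, using the moment bound \eqref{e:momentdiffZt} (with exponent $2n$) to gain a factor $\eps^{2\alpha}$, together with the heat-kernel estimate \eqref{e:hkbound5} to produce the $(s-r)^{-1/2}$ factor and control the exponential growth in $x$, one obtains by Cauchy--Schwarz
\[
\mathbb E\big[|D^+(x,r,s)\, D^-(x,r,s)|\big] \leq C\, \eps^{2\alpha}\,\frac{e^{a\eps^2 x}}{\sqrt{s-r}}.
\]

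\textbf{Bounding the stochastic piece.} By It\^o isometry applied to the martingale $E^\pm$,
\[
\mathbb E[E^+E^-\mid\mathcal F_r] = \int_r^s \sum_{y=0}^\infty \nabla^+_x \p^\eps_{s-r'}(x,y)\, \nabla^-_x \p^\eps_{s-r'}(x,y)\, \mathbb E\!\left[\tfrac{d}{dr'}[M(y)]_{r'} \;\Big|\; \mathcal F_r\right] dr',
\]
and Lemma \ref{lem:quadeps} splits $\tfrac{d}{dr'}[M(y)]_{r'}$ into an $\eps^2 Z_{r'}(y)^2$ piece, a $\nabla^+Z_{r'}(y)\nabla^-Z_{r'}(y)$ piece, and a lower-order term (plus a boundary contribution at $y=0$ handled analogously). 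The $\eps^2 Z_{r'}^2$ piece is controlled by combining the moment bound \eqref{e:momentZt} with the refined cancellation estimate \eqref{e:hkboundcancel2}, which directly furnishes the $\eps^2$ and $(s-r)^{-1/2}$ factors after the change of variables $r'' = s-r'$. The $\nabla^+Z_{r'}\nabla^-Z_{r'}$ piece is handled by inserting the bound \eqref{e:momentdiffZt} and using \eqref{e:hkboundcancel1}, which yields a contribution of order $\eps^{4\alpha}e^{2a\eps^2 x}$; since $r\geq \eps^{-3}$, this is absorbed into $\eps^{2\alpha}e^{a\eps^2 x}/\sqrt{s-r}$ for $\alpha<1/2$.

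\textbf{Conclusion and main difficulty.} Adding both contributions proves the lemma. The main technical obstacle is the summation-by-parts step for $D^\pm$: on the half-line with the $\mu=e^{-\eps}$ Robin condition the heat kernel is not translation invariant, so moving the gradient from $\p^\eps$ onto $Z_r$ produces a boundary correction of order $\eps$ multiplied by $Z_r(0)$ and its neighbors; one must verify, using \eqref{e:momentZt} together with the Hölder-type bound \eqref{e:momentdiffZt} applied near $y=0$, that this boundary remainder is negligible on the scale $\eps^{2\alpha}e^{a\eps^2 x}/\sqrt{s-r}$. A secondary delicate point is the uniformity of the estimates in the regime $s-r$ small, where the naive Cauchy--Schwarz bound $\|\nabla^+Z_s(x)\|_2\|\nabla^-Z_s(x)\|_2\leq C\eps^{4\alpha}e^{2a\eps^2 x}$ must be used instead and shown to be dominated by $\eps^{2\alpha}/\sqrt{s-r}$ exactly in the range of $s-r$ for which the heat-kernel based bound degrades.
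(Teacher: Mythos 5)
Your decomposition $Z_s(x) = \sum_y \p^\eps_{s-r}(x,y) Z_r(y) + \int_r^s\sum_y \p^\eps_{s-\tau}(x,y)\,dM_\tau(y)$ at time $r$ is equivalent, via the semigroup property, to the paper's $Z_s = I_s + N_r^s + \text{(martingale on }[r,s])$, and bundling $I_s + N_r^s$ into a single $\mathcal F_r$-measurable piece $D^\pm$ is arguably cleaner: the cross terms $\E[D^\pm E^\mp|\mathcal F_r]$ vanish, so you have two blocks instead of five. Your mixed Cauchy--Schwarz strategy for $\E[|D^+D^-|]$ --- summation by parts plus H\"older regularity \eqref{e:momentdiffZt} of $Z_r$ for one gradient, direct heat-kernel decay \eqref{e:hkbound5} for the other --- is sound in principle, though you would need to actually verify that the Robin boundary correction introduced by the summation by parts is negligible, which you flag but do not do. The paper avoids this: it bounds each of $\E[(\nabla^\pm I_s(x))^2]$ and $\E[(\nabla^\pm N_r^s(x))^2]$ by $C\eps^{2\alpha}e^{a\eps^2x}/\sqrt{s-r}$ directly, using $r\geq\eps^{-3}$ (so $s^{-1}\leq\eps^{3/2}(s-r)^{-1/2}$) for $I_s$, and the $\eps^2$ prefactor from It\^o isometry for $N_r^s$, then concludes by Cauchy--Schwarz.

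The genuine gap is in the $\nabla^+Z_\tau(y)\nabla^-Z_\tau(y)$ contribution to the quadratic-variation term. You bound $\E\big[|\E[\nabla^+Z_\tau\nabla^-Z_\tau|\mathcal F_r]|\big]\leq\|\nabla^+Z_\tau\|_2\|\nabla^-Z_\tau\|_2\lesssim\eps^{4\alpha}e^{4a\eps^2 y}$ via \eqref{e:momentdiffZt} and sum with \eqref{e:hkboundcancel1}, producing $\lesssim\eps^{4\alpha}e^{4a\eps^2 x}$, and then claim that $r\geq\eps^{-3}$ lets this be absorbed into $\eps^{2\alpha}e^{a\eps^2 x}/\sqrt{s-r}$. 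That step fails: $r\geq\eps^{-3}$ does not constrain $s-r$, which can be as large as order $\eps^{-4}T$. Taking $s-r\sim\eps^{-4}$, the target is $\eps^{2\alpha}\cdot\eps^{2}=\eps^{2\alpha+2}$, and $\eps^{4\alpha}\leq\eps^{2\alpha+2}$ would require $\alpha\geq 1$, which contradicts $\alpha<\tfrac12$. So the crude Cauchy--Schwarz bound is strictly weaker than what the lemma asserts when $s-r$ is large. The key observation you are missing is that $\E[\nabla^+Z_\tau(y)\nabla^-Z_\tau(y)|\mathcal F_r]=U(y,r,\tau)$ is itself the object being estimated, so the equation \eqref{e:UK} is a closed recursion in $U$; the paper bounds the $\eps^2 Z^2$ piece and the new boundary piece by $C\eps^{2\alpha}e^{a\eps^2 x}/\sqrt{s-r}$ and then closes the recursion by an iterative Gronwall-type argument exploiting the strict contraction $c<1$ in \eqref{e:hkboundcancel1}, following \cite[Lemma 5.8 and proof of Lemma 5.7]{corwin2016open}. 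Without that iteration the proof does not go through.
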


\begin{proof}[Proof of Lemma~\ref{l:U}]
	As in \cite{bertini1997stochastic,corwin2016open,parekh2017kpz}, we write
 	\begin{multline}
		U(k,r,s)  =  \nabla^-I_s(k)\nabla^+I_s(k)
		+\nabla^-I_s(k)\nabla^+N_r^s(k)+\nabla^-N_r^s(k)\nabla^+I_s(k) \\ 
		 +\nabla^-N_r^s(k)\nabla^+N_r^s(k) 
		 +  \mathbb E\left[ \int_r^s\sum_{\ell=0}^\infty K_{s-\tau}(k,\ell)d\langle M(\ell)\rangle_\tau \bigg| \mathcal F_r \right], \label{e:UK}
	\end{multline}
	where 
	\begin{align*}
		I_s(k)&=\sum_{\ell=0}^\infty \p^\eps_s(k,\ell)Z_0(\ell)\\N_r^s(k)&=\int_0^r\sum_{\ell=0}^\infty \p^\eps_{s-\tau}(k,\ell)dM_\tau(\ell),\\ K_t(k,\ell)&=\nabla^+\p_t^\eps(k,\ell)\nabla^-\p_t^\eps(k,\ell).
	\end{align*}
	By \eqref{e:hkbound5}, \eqref{e:momentZ0} and Lemma~\ref{lem:quadratic}, we have (see \textit{e.g.}~\cite[Proof of Lemma 5.7]{corwin2016open})
	\begin{equation}
		\max\left\{\mathbb E\left[\left(\nabla^\pm I_s(k)\right)^2\right], \mathbb E\left[\left(\nabla^\pm N_r^s(k)\right)^2\right]\right\}\leq C\eps^{2\alpha}\frac{e^{a\eps^2k}}{\sqrt{s-r}}.
	\end{equation}
	Moreover, \oo{using Lemma~\ref{lem:quadeps}}, the expectation term in \eqref{e:UK} can be split into
	\begin{align*}
		(\eps^2+o(\eps^2)) & \sum_{\ell=0}^\infty\int_r^sK_{s-\tau}(k,\ell)\mathbb{E}[(Z_\tau(\ell))^2|\mathcal F_r]d\tau -\eps\int_r^sK_{s-\tau}(k,0)\mathbb{E}[Z_\tau(0)\nabla^+Z_\tau(0)|\mathcal F_r]d\tau\\\oo{+}&\sum_{\ell=1}^\infty\int_r^sK_{s-\tau}(k,\ell)\mathbb{E}[\nabla^-Z_\tau(\ell)\nabla^+Z_\tau(\ell)|\mathcal F_r]d\tau.
	\end{align*}
	If we can bound the first two terms by $C\eps^{2\alpha}e^{a\eps^2k}/\sqrt{s-r}$, using \eqref{e:hkboundcancel1}, \eqref{e:hkboundcancel2}, the same iterative procedure as described in \cite{corwin2016open} yields the desired result. 
	
	Let us consider the second term (which is new with respect to the case treated in \cite{corwin2016open}). By \eqref{e:momentZt}, \eqref{e:momentdiffZt} and \eqref{e:hkbound4}, we have for any $\alpha<\frac12, b\geq 0,\nu\leq 1$
	\begin{align*}
		\left|\eps\int_r^sK_{s-\tau}(k,0)\mathbb{E}\big[Z_\tau(0)\nabla^+Z_\tau(0)|\mathcal F_r\big]d\tau\right|&\leq C\eps^{1+4\alpha}e^{-2b|k|}\int_r^s\left(1\wedge(s-\tau)^{1+\nu}\right)d\tau\\ &=C\eps^{1+4\alpha}e^{-2b|k|}(s-r)^{-\nu}.
	\end{align*}
	It thus remains to check that 
	\begin{align}
		\eps^2\sum_{\ell=0}^\infty\int_u^sK_{s-\tau}(k,\ell)\mathbb{E}[(Z_\tau(\ell))^2|\mathcal F_u]d\tau\leq C\eps^{2\alpha}\frac{e^{a\eps^2k}}{\sqrt{s-u}}, 
	\end{align}
	which is exactly Lemma 5.8 in \cite{corwin2016open}. In turn, this inequality is a consequence of bounds \eqref{e:hkbound4}, \eqref{e:hkbound5} and Lemma~\ref{l:Ztbounds-cont} (see \cite{corwin2016open}).
\end{proof}

\section{Delta prime initial condition: proof of Theorem \ref{thm:conv}}\label{s:deltaprime}
In this section we come back to our initial problem and assume that the initial condition is empty so that  $Z_0(k) = \mu^{k}$, where we recall that $\mu=\sqrt{q/p}$. Therefore, the initial condition is not near-equilibrium, and we  now use  the different scaling introduced in \eqref{eq:scaled1}, namely 
\begin{equation*}
	\rZ_t(x)=\eps^{-2} Z_{\eps^{-4}t}(\eps^{-2}x), \qquad x \in \R_+.
\end{equation*}
Let us now prove Theorem \ref{thm:conv}.

Recall that we already know from  Section \ref{sec:proofshe} existence and uniqueness of the solution to  the SHE (Definition \ref{de:solution}) with Dirichlet boundary condition and initial condition $\mathcal Z_{\rm ini}=-2\delta_0'$.  Also, we have obtained in Section \ref{sec:heatkernel} explicit estimates involving the discrete Dirichlet heat kernel, relying on exact computations {and random walk arguments}. 

\mm{In this section, some exact computations inspired by the recent paper \cite{barraquand2022markov} and} the heat kernel estimates from Section \ref{sec:heatkernel} \mm{will be crucial. More precisely, we start by} showing in Sections~\ref{subs:momentsZ} and \ref{subs:neareq} that after time $\delta>0$, $\rZ_{t}(\cdot)$ is near-equilibrium in the sense of Definition \ref{de:near-eq-cont}. This allows us to follow the same strategy as in Section~\ref{sec:neareq} to obtain tightness in the space $D([\delta,T],C(\mathbb R_+))$ for any $0<\delta<T$, and show that the limit points satisfy the SHE (Proposition~\ref{prop:tight2}). Then, following \cite{parekh2017kpz}, we will  show in Section \ref{sec:limitZ} that there exists a limit point in the space $D((0,T],C(\mathbb R_+))$, which is solution to the SHE with Dirichlet boundary condition.  We also prove in Section \ref{sec:limitZ} second moment bounds satisfied by this limit point, using \cite{barraquand2022markov}. 
We finally determine the initial condition in Section \ref{sec:conclusion}, using the moment bounds  from Section \ref{sec:limitZ}.

\subsection{Moment estimate for $\mathcal{Z}^\eps$} \label{subs:momentsZ}
\begin{proposition}\label{p:momentZ}
	Fix $\delta>0$ and  $T\geqslant \delta$. Let $n \in \N$. There exists $C =C(\delta, T,n)\in\R_+$ and $\eps_0$ such that, for all $\varepsilon \in (0,\eps_0)$, $k \in \N$, and $t \in [\eps^{-4}\delta,\varepsilon^{-4}T]$:
	\begin{align}
		\|\varepsilon^{-2} Z_t(k)\|_n & \leqslant C .\label{eq:Zp} 
	\end{align}
	\end{proposition}

 When $t=\eps^{-4}\tau$, $\|\varepsilon^{-2} Z_t(k)\|_n$ cannot be bounded by $C/\sqrt{\tau}$ as was the case in \cite{parekh2017kpz}, but rather by $C/\tau$. Because the latter bound is not integrable in the variable $\tau$ near $0$, we cannot adapt the method from \cite[Proposition 6.2]{parekh2017kpz}, based on the martingale decomposition, to prove such an estimate. Hence, we will employ another method, based on exact moment formulas from \cite{barraquand2022markov}. 

More precisely, the bound \eqref{eq:Zp} can be obtained using the same arguments as in \cite[Proposition 4.6]{barraquand2022markov}. However, this result is stated there for another boundary condition (leading to Neumann type boundary condition in the $\eps\to 0$ limit). It is claimed in \mm{\cite[Section 4.3]{barraquand2022markov}} that the same arguments apply as well in the present case. Since explanations given in \cite{barraquand2022markov} do not contain much details, we provide a complete proof of the bound \eqref{eq:Zp}. First, in Section~\ref{sss:integral_formula} we present an exact integral formula for the moments of $Z$ (see \eqref{eq:momentZ}). From there, in Section \ref{subs:proof} we deduce Proposition~\ref{p:momentZ}.

\subsubsection{Exact integral formula}\label{sss:integral_formula}
Mixed moments of $Z_t(k)$ are computed in \cite{barraquand2022markov}, in the form of certain finite sums of explicit contour integrals. The structure of those sums is somewhat intricate, and we will need to introduce a few pieces of notation before stating the result. First of all, recall our notation for the asymmetry ratio $\mu^2=q/p$, which we scale as $\mu^2=e^{-2\eps}$ (the asymmetry ratio was denoted by the letter $q$ in \cite{barraquand2022markov} but this choice would be confusing in the present paper). 

Fix $n\in \N$. Moments of order $n$ will be written as a sum over integer partitions $\lambda$ of $n$, \emph{i.e.}~nonincreasing sequences of integers  $(\lambda_1, \lambda_2, \dots, \lambda_{\ell(\lambda)})$ such that  $\sum_{i=1}^{\ell(\lambda)}\lambda_i =n$. The fact that a partition $\lambda$ is a partition of the integer $n$ is denoted $\lambda\vdash n$.  The number $\ell(\lambda)$ is called the length of the partition $\lambda$. 

For a given partition $\lambda\vdash n$,  we will further consider diagrams formed by  $\ell(\lambda)$ lines of numbers separated by arrows, of the following form:  
\begin{eqnarray*}
	i_1 &\rightarrowplus i_2\rightarrowplus \dots \rightarrowplus i_{r_1-1} \rightarrowplus &i_{r_1}\leftarrowminus i_{r_1+1} \leftarrowplus \dots\leftarrowplus i_{\lambda_1}\\
	j_1 &\rightarrowplus j_2\rightarrowplus \dots \rightarrowplus j_{r_2-1} \rightarrowplus &j_{r_2}\leftarrowminus j_{r_2+1} \leftarrowplus \dots\leftarrowplus j_{\lambda_2} \\
	&&	\vdots   
\end{eqnarray*}
The diagrams are such that 
\begin{itemize}
	\item all numbers $i_1,i_2,\dots, j_1,j_2,\dots$ 
	are distinct elements of $\lbrace 1, \dots,n\rbrace$ (so that every integer in $\lbrace 1, \dots,n\rbrace$ appears exactly once in the diagram); 
	\item arrows go from a larger integer to a smaller one;
	\item on each line, arrows change from pointing right to pointing left at most once. Equivalently, the sequence of numbers is decreasing, until a certain minimum, and then it is increasing. We denote by $r_i$ the index of the minimal number on the $i$th line, around which arrows change orientation. The numbers  $r_1,r_2, \dots$ are denoted $\mu_1,\mu_2, \dots$ in \cite{barraquand2022markov} but we changed the notation here to avoid ambiguity with the asymmetry parameter $\mu=\sqrt{q/p}$;
	\item all arrows bear a plus sign, except for the first arrow pointing left in each row which bears a minus sign. It may also happen that on some row, $r_i=\lambda_i$, i.e. the minimal number is in the rightmost position, in which case all arrows point right and bear a plus sign. 
\end{itemize}
 We denote by $S(\lambda)$ the set of all such diagrams and we will use the letter $I$ to denote one such diagram. We refer to \cite[Section \mm{2.5.2}]{barraquand2022markov} for details about these definitions and examples of diagrams. The precise structure of arrows described above will not matter that much in the arguments below, but is is important to note that $S(\lambda)$ is a finite set for any $\lambda$.

 Now, given a function $f$ of $n$ complex variables $z_1, \dots, z_n$,  we will define a procedure that  associates any diagram $I\in S(\lambda), \lambda\vdash n$ with a certain multivariate residue of the function $f$. For a diagram $I\in S(\lambda)$ with $\lambda\vdash n$, we will compute residues in as many variables as the number of arrows in the diagram $I$ (that is $n$ minus the number of lines in the diagram), according to the following rules:   
\begin{itemize}
	\item for each arrow of the form $a\rightarrowplus b$ or $b\leftarrowplus a$ we compute the residue in the variable $z_a$ at the point $\mu^2 z_b$. Assuming that $f(z_a,z_b)$ is a rational function in $z_a$ with a simple pole at $\mu^2z_b$, which will be the case below, the residue of $f$ in the variable $z_a$ at the point $\mu^2z_b$ is simply given by $\mathrm{Sub}_{z_a\to \mu^2z_b}\left\lbrace (z_a-\mu^2z_b)f(z_a,z_b)\right\rbrace$, where $\mathrm{Sub}$ denotes the substitution of a variable by a certain expression, i.e. for any rational function $g(z)$,  $\mathrm{Sub}_{z\to w}\left\lbrace g(z)\right\rbrace=g(w)$; 
	\item for each arrow of the form $a\leftarrowminus b$ we compute the residue in the variable $z_b$ at $1/z_a$. Again, assuming that the pole is simple, which will be the case below, the residue in the variable $z_b$ at the point $1/z_a$ of a rational function $f(z_a,z_b)$ is simply given by $\mathrm{Sub}_{z_b\to 1/z_a}\left\lbrace (z_b-1/z_a)f(z_a,z_b)\right\rbrace$).
\end{itemize} 
We denote such a  multi-residue  by $\Res{I} \left\lbrace  f(\vec z)\right\rbrace$. Again, we refer to \cite[Section \mm{2.5.2}]{barraquand2022markov} for many details about this procedure and justifications that it is well-defined. After computing all those residues, the quantity $\Res{I} \left\lbrace  f(\vec z)\right\rbrace$ only depends on  the variables $z_j$ for each $j$ being the minimal number on one of the lines of the diagram $I$. In other terms, $\Res{I} \left\lbrace  f(\vec z)\right\rbrace$ is a function of the variables $z_{i_{r_1}}, z_{j_{r_2}}, \dots$ Eventually, there are as many variables left as the number of lines in the diagram, that is the length of the partition $\lambda$. 

To be more concrete, let us consider the partition $\lambda=(n)$ and the diagram $I$ formed by a single line with only arrows bearing a plus sign
$$I\;\;=\;\; n \rightarrowplus n-1 \rightarrowplus \dots \rightarrowplus 2 \rightarrowplus 1 $$
Then, for a rational function $f$ of variables $z_1, \dots, z_n$, in order to compute $\Res{I} \left\lbrace  f(\vec z)\right\rbrace$, we will first multiply the function $f$ by the products 
\begin{equation}
	(z_n-\mu^2z_{n-1})(z_{n-1}-\mu^2z_{n-2})\dots (z_2-\mu^2z_1) 
	\label{eq:products}
\end{equation}
and then evaluate the rational function obtained at the point 
$$ (z_1, \dots, z_n)= (z_1, \mu^{2}z_1, \dots, \mu^{2(n-1)}z_1)$$
This indeed corresponds to the usual notion of complex residue provided that the denominator of the rational function $f$ is divisible exactly once by each of the factors appearing in \eqref{eq:products}. Let us  also consider the slightly more general situation where for some $1\leq j\leq n-1$, 
$$I\;\;=\;\; j \rightarrowplus j-1 \rightarrowplus \dots \rightarrowplus 2 \rightarrowplus 1\leftarrowminus j+1 \leftarrowplus \dots\leftarrowplus n $$
In this case, in order to compute $\Res{I} \left\lbrace  f(\vec z)\right\rbrace$, we multiply the function $f$ by the products 
\begin{equation}
	(z_j-\mu^2z_{j-1})(z_{j-1}-\mu^2z_{j-2})\dots (z_2-\mu^2z_1) (z_{j+1}-1/z_1)(z_{j+2}-\mu^2z_{j+1})\dots (z_n-\mu^2z_{n-1})
	\label{eq:products2}
\end{equation}
and then evaluate the rational function obtained at the point 
$$ (z_1, \dots, z_n)= (z_1, \mu^{2}z_1, \dots, \mu^{2(j-1)} z_1, 1/z_1, \mu^2/z_1, \mu^{2(n-j-2)}/z_1)$$ 
Again, it corresponds to the usual notion of complex residue in all the cases considered below. 
We will consider sums over diagrams of contour integrals of the form 
\begin{multline}
	\mathcal I_n[\phi(\vec x, \vec z)]_{\mathcal C}=\mu^{n(n-1)}  \sum_{\lambda \vdash n} \frac{(-1)^{n-\ell(\lambda)}}{m_1!m_2!\dots}\sum_{I\in S(\lambda)} \oint_{\mathcal C} \frac{dz_{i_{r_1}}}{2\I\pi}  \oint_{\mathcal C} \frac{dz_{j_{r_2}}}{2\I\pi} \dots \; \\ \Res{I} \left\lbrace  \prod_{i<j} \frac{z_i-z_j}{\mu^2 z_i-z_j} \frac{\mu^{-2}-z_iz_j}{1-z_iz_j}\prod_{j=1}^n\frac{1}{z_j}\;\;\phi(\vec x, \vec z)\right\rbrace,
	\label{eq:notationcalI}
\end{multline} 
where $\mathcal C$ will be a certain integration contour in the complex plane and the function $\phi:\mathbb Z^n\times \mathbb C^n\to \mathbb C$ will be holomorphic in a neighborhood of $\mathcal C^n$, for all $\vec x$.

We are now able to state
\cite[Theorem 2.13]{barraquand2022markov}, which, translated into the notations of the present paper, states that for $0\leq  k_1 <\dots <k_n$, 
\begin{equation} \mathbb E\left[\prod_{i=1}^n Z_t(k_i) \right]=\mathcal I_n\left[ \prod_{j=1}^nG_{k_j}(z_j)  \right]_{\mathcal C}
	\label{eq:momentZ}
\end{equation} 
where $\mathcal C$ is the positively oriented circle of radius $1/\mu$ around $0$ (the formula holds as long as the radius is comprised between $1$ and $1/\mu^2$), and 
\begin{equation} G_k(z) = \frac{p-q z^2}{p-p z}  \exp\left( \frac{(\sqrt{p}-\sqrt{q})^2(\sqrt{p}+\sqrt{q} z)^2  t}{(1-z)(p-q z)}\right) \left( \frac{\sqrt{pq}(1-z)}{p-q z} \right)^{k+1}.
	\label{eq:defGx}
\end{equation}

\subsubsection{Proof of Proposition \ref{p:momentZ}}
\label{subs:proof}
In this section, we prove \eqref{eq:Zp}. By the definition of $Z_t(k)$, we have $Z_t(k)/Z_t(k+1)\leq \mu^{-1}$, so we can write
$$ \mathbb E\left[(\eps^{-2} Z_t(k))^n\right] \leq C\oo{(n,\eps_0)} \,\mathbb E\left[ \eps^{-2n} Z_t(k)Z_t(k+1)\dots Z_t(k+n-1)\right].$$
The right hand side can be evaluated using \eqref{eq:momentZ}. 

We will scale $p$ and $q$ as in \eqref{eq:pq},  let $t=\eps^{-4}\tau$, and  
 estimate each of the factors appearing in \eqref{eq:momentZ}. Since we are taking many residues in \eqref{eq:momentZ} we not only need to estimate the functions when the variables $z_i$ vary over the contour  $\mathcal C$ but also when the variables vary in $\mu^2\mathcal C \mathcal, \dots \mu^{2(n-1)} \mathcal C$ and $\mu^2/\mathcal C, \dots \mu^{2(n-1)}/\mathcal C$. 

We observe that when $z$ varies over a circle around $0$ with radius at most $(1+\mu^{-2})/2$, the modulus of $\big| \frac{\sqrt{pq}(1-z)}{p-q z} \big|$ is maximal when $z$ has minimal real part. In particular, if $z$ varies over a circle of radius comprised between $\mu^{-1}$ and $\mu^{2n-1}$,
\begin{equation}
	\left\vert  \frac{\sqrt{pq}(1-z)}{p-q z} \right\vert 
	\leqslant 1.
	\label{eq:boundx}
\end{equation}
To estimate the other factors, it will be convenient to use the change of variables $z=-1/\mu \mu^{2w}$.
The rational factors in the formula are easily bounded by a constant. More precisely, we have   (we correct a power of $\eps$ which is missing in  \cite[(4.12)]{barraquand2022markov})
\begin{equation}
	\left\vert \Res{I} \left\lbrace  \prod_{i<j} \frac{z_i-z_j}{\mu^2 z_i-z_j} \frac{\mu^{-2}-z_iz_j}{1-z_iz_j}\right\rbrace \right\vert <C \eps^{n-\ell},
	\label{eq:boundRes}
\end{equation} 
where $\ell$ is the number of lines in the diagram $I$. The power of $\eps$ will exactly cancel factors of $\eps$ coming from the Jacobian arising in \eqref{eq:notationcalI} after the change of variables.
  We can find a constant $C$ so that 
\begin{equation}
	\left\vert \frac{p-q z^2}{p-p z} \right\vert \leq C\eps\vert w\vert.
	\label{eq:boundalone}
\end{equation}
Then, to estimate the exponential factor, we need to control the real part of the function 
\begin{equation}  \frac{(\sqrt{p}+\sqrt{q} z)^2  \eps^{-2}}{(1-z)(p-q z)} = \frac{(1-e^{-2\eps w})^2\eps^{-2}}{(1+e^{-\eps(2w-1)})(1+e^{-\eps(2w+1)})}=:H(w).
	\label{eq:defh}
\end{equation}
Under the change of variables that we are considering, the contour $\mathcal C$ for $z$ becomes $\I[\frac{-\pi}{2} \eps^{-1}, \frac{\pi}{2} \eps^{-1}]$ for $w$. Since the formula in \eqref{eq:momentZ} involves taking residues, we not only need to estimate $\Re[H(w)]$ along the contour $\mathcal C$, but also estimate  expressions of the form $\Re[H(w_1)]+\dots+ \Re[H(w_k)]$, where $\vec w$ is of the form $(w+k-1, w+k-2, \dots, w)$ or of the form  $(w+k-1, \dots, w, 1-w, 2-w, \dots, \ell-w)$ for some integers $k,\ell\geq 0$ such that $k+\ell\leq n$, and $w\in \I\R$. 
\begin{lemma}[{\cite[Lemma 4.7]{barraquand2022markov}}]
	Fix positive integers $k,\ell\leq n$ such that $k+\ell\leq n$. There exist a constant $C$ and $ \eps_0$ so that uniformly for $\eps\in (0, \eps_0)$, for all $y\in [\frac{-\pi}{2} \eps^{-1}, \frac{\pi}{2} \eps^{-1}]$, 
	\begin{equation}
		\sum_{i=0}^{k-1}\Re[H(\I y+i)]\leq  C (C-y^2); 
		\label{eq:lemma}
	\end{equation}
	and for a sequence $(w_1, \dots, w_{k+\ell})$ of the form $(w+k-1, \dots, w, 1-w, 2-w, \dots, \ell-w)$ with $w=\I y$, 
	\begin{equation}
		\sum_{i=1}^{k+\ell}\Re[H(w_i)]\leq  C (C-y^2).
		\label{eq:lemma2}
	\end{equation}
	\label{lem:quadraticdecay}
\end{lemma}

Using Lemma \ref{lem:quadraticdecay} and the bounds \eqref{eq:boundx},  \eqref{eq:boundRes} and \eqref{eq:boundalone} above, the integrand of each term in the sum \eqref{eq:momentZ} can be dominated by a function of the form $Ce^{\tau C(C-y^2)}$ so that we have obtained 
\begin{equation*}
	 \mathbb E\left[(\eps^{-2} Z_t(k))^n\right] \leq \\  \sum_{\lambda \vdash n} \frac{1}{m_1!m_2!\dots}\sum_{I\in S(\lambda)} \int_{-\eps^{-1}\pi/2}^{\eps^{-1}\pi/2} \frac{dy_1}{2\pi} \dots  \int_{-\eps^{-1}\pi/2}^{\eps^{-1}\pi/2}\frac{dy_{\ell(\lambda)}}{2\pi} C \prod_{i=1}^{\ell(\lambda)} e^{C\tau (C-y_i^2)}.
\end{equation*}
Since $\tau \in [\delta,T]$, the above finite sum of integrals can be bounded by a constant (depending on $n, \delta, T$). This concludes the proof of \eqref{eq:Zp}.

\subsection{H\"older estimates for $\mathcal{Z}^\eps$}
\label{subs:neareq}

We now  prove the following H\"older estimates which, together with Proposition \ref{p:momentZ}, will be essential to prove the forthcoming tightness result.

\begin{proposition}\label{prop:HolderZdelta'}
	Fix $\delta>0$ and  $T\geqslant \delta$. Let $n \in \N$. For any $\alpha\in[0,\frac12)$ there exists $C =C(\alpha,\delta, T,n)\in\R_+$ such that, for \mm{$\eps$ small enough}, for any $k,k' \in \N$, and $t,t' \in [\eps^{-4}\delta,\varepsilon^{-4}T]$:
	\begin{align}
		\|\varepsilon^{-2} (Z_t(k)-Z_{t}(k'))\|_{2n} & \leqslant C(\eps^2|k-k'|)^{\alpha} . \label{eq:Zxxprime}
	\end{align}
and 
\begin{equation}
	\|\varepsilon^{-2} (Z_t(k)-Z_{t'}(k)) \|_{2n}  \leqslant  C  \; \varepsilon^{2\alpha} (1 \vee |t-t'|^{\alpha/2}).   \label{eq:Zttprime}
\end{equation}
\end{proposition}
\begin{proof}
\oo{With the moment estimate \eqref{eq:Zp} at hand, the proof is very similar in spirit to that of Lemma~\ref{l:Ztbounds-cont}, but we need to control more tightly $\overline{F}$ when $F$ is the difference of two heat kernels (Lemma~\ref{l:integralF} below).}

First, let us choose $\eps$ small enough such that $\eps^{-4} \delta \ge 1$. We start the proof of \eqref{eq:Zxxprime} by \oo{writing Duhamel's formula between times $\eps^{-4}\delta$ and $t$: 
\begin{equation}
 Z_t(k)=\sum_{j=0}^\infty \p^\eps_t(k,j)Z_{\eps^{-4}\delta}(j)+\int_{\eps^{-4}\delta}^t\sum_{j=0}^\infty \p^\eps_{t-s}(k,j)dM_s(j).
\end{equation}
 We get, using Lemma~\ref{l:iteration} and Proposition~\ref{p:momentZ},}
\begin{align*}
		\big\|\varepsilon^{-2}(Z_t(k)-Z_t(k'))\big\|_{\mm{2n}} &\leqslant \big|  (\p_t^\varepsilon \ast \varepsilon^{-2}Z_{\oo{\eps^{-4}}\delta})(k)-(\p_t^\varepsilon \ast \varepsilon^{-2}Z_{\oo{\eps^{-4}}\delta})(k')\big|  \\
		&\qquad+ \bigg\|\varepsilon^{-2} \int_{\oo{\eps^{-4}}\delta}^t \sum_{j\in\N} \big(\p_{t-s}^\varepsilon (k,j)-\p_{t-s}^\varepsilon(k',j)\big) d M_s(j)\bigg\|_{\mm{2n}}\\
		&\leq C\sum_{j\in\N}\left|\p^\eps_t(k,j)-\p^\eps_t(k',j)\right| +  C\Bigg(\eps^2\int_{{\eps^{-4}\delta}}^t\sum_{j\in\N}\bar F(s,j)^2 ds\Bigg)^{1/2},
	\end{align*}
where $F(s,j)=\p^\eps_\oo{t-s}(k, j)-\p^\eps_\oo{t-s}(k',j)$. Now, 
 for $\eps^2|k-k'|\geqslant 1$, \eqref{eq:Zxxprime} is a consequence of the moment bound \eqref{eq:momentZ}, while for $\eps^2|k-k'|<1$, \eqref{eq:Zxxprime} is a consequence of \eqref{eq:diffheatkerneleps} (which bounds the first term) and the following lemma.

\begin{lemma}\label{l:integralF} Let $t\ge \eps^{-4}\delta\ge 1$ and let $F$ defined as above. Then, there exists a universal constant $C\in\R_+$ such that, if $k\neq k'$, 
\begin{align} 
 \eps^2\int_{\eps^{-4}\delta}^t\sum_{j\in\N}\bar F(s,j)^2 ds&\leq C\eps^2|k-k'|\left(1+\ln(t/|k-k'|)\vee 0\right).\label{e:integralFbar}
\end{align}
\end{lemma}
\noindent We postpone the proof of this lemma to the end of the section. 

\medskip

Let us now turn to \eqref{eq:Zttprime}. \mm{Once again, when $|t-t'| \ge 1$, from Duhamel's formula} we write the martingale decomposition between times $t'$ and $t$: 
\begin{equation} Z_t(k)  = \sum_{j\geqslant 0} \p_{t-t'}^\varepsilon(k,j)Z_{t'}(j) + \int_{t'}^{t} \sum_{j\geqslant 0} \p_{t-s}^\varepsilon (k,j) d M_s(j). \label{eq:Zmart}
\end{equation}
Thus, we get
\begin{equation} 
	\big\|\varepsilon^{-2}(Z_t(k)-Z_{t'}(k))\big\|_{2n}  \leqslant \big\|  (\p_{t-t'}^\varepsilon \ast \varepsilon^{-2}Z_{t'})(k)- \varepsilon^{-2}Z_{t'}(k)\big\|_{2n} + \bigg\|\varepsilon^{-2} \int_{t'}^t \sum_{j\geqslant 0} \p_{t-s}^\varepsilon (k,j) d M_{s}(j)\bigg\|_{2n}.
	\label{eq:estimeedeuxtemps}
\end{equation} 
We denote the terms on the right-hand side above by $I_1$ and $I_2$ respectively. \oo{As in Section~\ref{s:tightnessneareq}, we have, using \eqref{eq:Zp}, \eqref{eq:Zxxprime} and \eqref{eq:killingestimate}:}
\begin{align*}
	I_1 & = \bigg\|  \varepsilon^{-2} \mm{\bigg(}\sum_{j\geqslant 0} \p_{t-t'}^\varepsilon(k,j)Z_{t'}(j)-Z_{t'}(k) \mm{\bigg)}\bigg\|_{2n} \\
	& \leqslant  \bigg\|  \varepsilon^{-2} \sum_{j\geqslant 0} \p_{t-t'}^\varepsilon(k,j)\big(Z_{t'}(j)-Z_{t'}(k)\big)\bigg\|_{2n} + \bigg|\sum_{j\geqslant 0} \p^\varepsilon_{t-t'}(k,j)-1\bigg| \; \big\|\varepsilon^{-2} Z_{t'}(k)\big\|_{2n}\\
	& \leqslant  C   \varepsilon^{2\alpha} \oo{|t-t'|^{\alpha/2}} + C\exp\bigg\{-\frac{\eps^2 k}{\sqrt{\eps^4(t-t')}}\bigg\}   .
\end{align*}
	 In the last line, we have used the same proof as for 
	\cite[Eq. (34)]{parekh2017kpz}: since $r^\alpha\leq e^r$ for $\alpha\leqslant 1$,
	\begin{align}
		\sum_{j\geqslant 0} \p_{t-t'}^\varepsilon(k,j)  |k-j|^\alpha &\leqslant\sum_{j\geqslant 0} \p_{t-t'}^\varepsilon(k,j)  e^{|k-j|}\nonumber\\
		&\leqslant \sum_{j\geqslant 0} \p_{t-t'}^\varepsilon(k,j) \oo{ |t-t'|^{\alpha/2}}e^{|k-j|(1\wedge |t-t'|^{-\alpha/2})}\nonumber\\
		&\leqslant C \oo{ |t-t'|^{\alpha/2 }},
		\label{eq:Holderheatkernel} 
	\end{align}
	using \eqref{e:hkbound1} with $a_1=1$, $a_2=0$.

As in Section~\ref{s:tightnessneareq}, $I_2$ is bounded using \eqref{eq:Zp} and both estimates \eqref{eq:it1} and \eqref{eq:it2} from Lemma \ref{l:iteration}:
\[ I_2 \leqslant C  \varepsilon(t-t')^{1/4} \leqslant C'  \varepsilon^{2\alpha} |t-t'|^{\alpha/2}. \]
As in Section~\ref{s:tightnessneareq} we now split cases and deduce \eqref{eq:Zttprime} for $\eps^2k \geqslant \left(\eps^4(t-t')\right)^{\sqrt{\alpha/2}}$. 
If instead $\eps^2k<\left(\eps^4(t-t')\right)^{\sqrt{\alpha/2}}$, we define $k':=\lceil\eps^{-2}(\eps^4(t-t'))^{\sqrt{\alpha/2}}\rceil$ and use the triangle inequality, \eqref{eq:Zxxprime} between $k$ and $k'$ and \eqref{eq:Zttprime} for $k'$ to prove \eqref{eq:Zttprime} for $k$.
\end{proof}

\mm{It remains to prove Lemma \ref{l:integralF}.}

\begin{proof}[Proof of Lemma~\ref{l:integralF}] Recall $F(s,j)=\p^\eps_s(k, j)-\p^\eps_s(k',j)$.
 We first prove that
 \begin{equation}\label{e:integralF2}
  \eps^2\int_0^t\sum_{j\in\N} F(s,j)^2 ds\leq \frac{20}{\pi}\eps^2|k-k'|\left(1+\ln(t/|k-k'|)\vee 0\right) .
 \end{equation}
Using the semigroup property, we can evaluate
\begin{align}
 \sum_{j\in\N} F(s,j)^2&=\sum_{j\in\N}(\p_{t-s}^\eps(k,j)-\p^\eps_{t-s}(k',j))^2\\
 &=\p^\eps_{2(t-s)}(k,k)+\p^\eps_{2(t-s)}(k',k')-2\p^\eps_{2(t-s)}(k,k').
\end{align}
Using \eqref{eq:heatkernelRobinintegral}, this can be expressed as 
\begin{multline}
 \frac{1}{2i\pi}\oint \left[\xi^k\Big(\xi^{-k}+\xi^{k+1}\frac{\mu-\xi}{1-\mu\xi}\Big)+\xi^{k'}\Big(\xi^{-k'}+\xi^{k'+1}\frac{\mu-\xi'}{1-\mu\xi'}\Big)-2\xi^k\Big(\xi^{-k'}+\xi^{k'+1}\frac{\mu-\xi}{1-\mu\xi}\Big)\right]\\
 \times e^{(\xi+\xi^{-1}-2)(t-s)}\frac{d\xi}{\xi}\\
 =  \frac{1}{2i\pi}\oint e^{(\xi+\xi^{-1}-2)(t-s)}\left[2(1-\xi^{k-k'})+\frac{\mu-\xi}{1-\mu\xi}\xi^{2k'+1}(1-\xi^{k-k'})^2\right]\frac{d\xi}{\xi}.
\end{multline}
This is a non-negative quantity, which can be bounded by 
\begin{align}
\frac{2}{\pi}\oint e^{(\xi+\xi^{-1}-2)(t-s)}\left|1-\xi^{k-k'}\right|\frac{d\xi}{\xi} &=\frac{4}{\pi}\int_{-\pi}^\pi e^{2(\cos\theta -1)(t-s)}\left|\sin\left(\frac{k-k'}{2}\theta\right)\right|d\theta .
\end{align}
Integrating over $s$, we get
\begin{align}
\eps^2\int_0^t\sum_{j\in\N} F(s,j)^2 ds\leq \frac{4}{\pi}\eps^2 \int_0^\pi\left|\sin\left(\frac{k-k'}{2}\theta\right)\right|\frac{e^{\mm{2}(\cos\theta-1)t}-1}{\mm{2}(\cos\theta -1)}d\theta.
\label{eq:boundwithsine}
\end{align}
 Let us denote $I$ the last integral and rewrite $\eps^{2}(x-x')=2X$, $t=\eps^{-4}\tau$. Then,
\begin{align}
 I=\int_0^\pi|\sin(\eps^{-2}X\theta)|\frac{1-e^{(\cos\theta-1)\eps^{-4}\tau}}{1-\cos\theta }d\theta.
\end{align}
On $[0,\pi]$, $-\theta^2/2\leq\cos\theta -1\leq -\theta^2/5$, so that
\begin{align}
 I&\overset{{\color{white}\theta'=\theta\eps^{-2}X}}{\leq} 5\int_0^\pi|\sin(\eps^{-2}X\theta)|\frac{1-e^{-\eps^{-4}\tau\theta^2/2}}{\theta^2}d\theta\\
 &\overset{\theta'=\theta\eps^{-2}X}\leq 5\eps^{-2}X\int_0^{\pi\eps^{-2}X}|\sin\theta'|\frac{1-e^{-\tau\theta'^2/(2X^2)}}{\theta'^2}d\theta'\\
 &\overset{|\sin\theta|\leq\theta\wedge 1}{\leq} 5\eps^{-2}X\left[\int_0^1\frac{1-e^{-\tau\theta^2/(2X^2)}}{\theta}d\theta+\int_1^\infty\frac{1-e^{-\tau\theta^2/(2X^2)}}{\theta^2}d\theta\right].
\end{align}
The second integral is less that $\int_1^\infty\theta^{-2}d\theta=1$. The first integral can be computed as 
\begin{equation}
 \int_0^\frac{\sqrt{\tau}}{X\sqrt{2}}\frac{1-e^{-\theta^2}}{\theta}d\theta\leq 1+\int_1^{\frac{\sqrt{\tau}}{X\sqrt{2}}\vee 1}\theta^{-1}d\theta=1+\ln\left(\frac{\sqrt{\tau}}{X\sqrt{2}}\right)\vee 0.
\end{equation}
\eqref{e:integralF2} follows. We now turn to the proof of \eqref{e:integralFbar}. We may treat separately the integration over $[0,t-1]$ and the integration over $[t-1,t]$. Let us first focus on on 
\begin{multline}
	\varepsilon^2\int_0^{t-1} \sum_{j \in \N}\bigg( \sup_{\vert s'-s\vert \leqslant 1} \left\lbrace \left\vert  \p^\eps_{t-s'}(k,j)-\p^\eps_{t-s'}(k',j)\right\vert  \right\rbrace  \bigg)^2ds \\
	= \varepsilon^2\int_0^{t-1} \sum_{j \in \N}\left( \sup_{0\leqslant u\leqslant 2} \left\lbrace \left\vert   \p^\eps_{r+u}(k,j)-\p^\eps_{r+u}(k',j) \right\vert \right\rbrace  \right)^2dr
	\label{eq:quantitytobound2}
\end{multline}
where in the right hand side, we have used the change of variables $r=t-s-1$. 
We may write 
\begin{align}
	\sup_{0\leqslant u\leqslant 2} \Big\lbrace \left\vert \p^\eps_{r+u}(k,j)-\p^\eps_{r+u}(k',j) \right\vert  \Big\rbrace  &\leq \sup_{0\leqslant u\leqslant 2} \left\lbrace  \sum_{\ell=0}^{+\infty} \left\vert \p^\eps_{r}(k,\ell)-\p^\eps_{r}(k',\ell)\right\vert \p^\eps_u(\ell,j)  \right\rbrace\\
	&\leqslant  \sum_{\ell=0}^{+\infty} \vert \p^\eps_{r}(k,\ell)-\p^\eps_{r}(k',\ell)\vert \sup_{0\leqslant u\leqslant 2} \left\lbrace \p^\eps_u(\ell,j)  \right\rbrace.
\end{align}
There exists a function $f:\Z\to \mathbb R_+$ with $\sum_{n\in \N} f(n) <+\infty$ such that 
\begin{equation}
	 \sup_{0\leqslant u\leqslant 2} \left\lbrace \p^\eps_u(\ell,j) \right\rbrace \leqslant f( \ell-j).
\end{equation}
Indeed, it suffices to bound $\p^\eps_u(\ell,j)$ by the probability that a Poisson variable with parameter $2$ is larger than $\vert \ell-j\vert$. It will be convenient to write $f(k)=C \mathbf P(k)$ where $\mathbf P$ is a probability distribution. We will denote by $Z$ a random variable with distribution $\mathbf P$ and write $\mathbf E$ the associated expectation. 

We have 
\begin{align}
		\sup_{0\leqslant u\leqslant 2} \lbrace  \left|\p^\eps_{r+u}(k,j)-\p^\eps_{r+u}(k',j)  \right|\rbrace &\leqslant C \sum_{\tilde\ell=-j}^{+\infty} \vert \p^\eps_{r}(k,j+\tilde\ell)-\p^\eps_{r}(k',j+\tilde\ell)\vert \mathbf P(\tilde\ell)\\
		&\leqslant C \sum_{\tilde \ell=-\infty}^{+\infty} \vert \p^\eps_{r}(k,j+\tilde \ell)-\p^\eps_{r}(k',j+\tilde \ell)\vert \mathbf P(\tilde \ell)\\
		&\leqslant C \mathbf E\Big[  \vert \p^\eps_{r}(k,j+Z)-\p^\eps_{r}(k',j+Z)\vert \Big],\label{e:boundpbyZ}
\end{align} 
where in the second line, we have extended the summation over all $\mathbb Z$ using the convention that $\p^\eps_s(k,j)=\p^\eps_s(k',j)=0$ when $j<0$. 
Now, using Jensen's inequality and the linearity of the expectation, we have 
\begin{equation}
\eqref{eq:quantitytobound2} \leqslant C^2 \mathbf E\Bigg[  \varepsilon^2 \int_0^{t-1} \sum_{j\in \Z}\left(   \p^\eps_{r}(k,j+Z)-\p^\eps_{r}(k',j+Z)   \right)^2ds  \Bigg], 
\end{equation}
where, again, we have extended the sum over $j\in \Z$ for notational convenience. Inside the expectation, we may use \eqref{e:integralF2} and we conclude that 
 \begin{equation}
 	\eqref{eq:quantitytobound2} \leqslant \frac{20C^2}{\pi}\eps^2|k-k'|\left(1+\ln(t/|k-k'|)\vee 0\right) .
 \end{equation}
 It remains to deal with the integral from $t-1$ to $t$:
 \begin{multline}
	\varepsilon^2\int_{t-1}^{t} \sum_{j \in \N}\left( \sup_{\vert s'-s\vert \leqslant 1} \left\lbrace \left\vert  \p^\eps_{t-s'}(k,j)-\p^\eps_{t-s'}(k',j)\right\vert  \right\rbrace  \right)^2ds \\
	\leq \varepsilon^2 \sum_{j \in \N}\left( \sup_{0\leqslant u\leqslant 2} \left\lbrace \left\vert   \p^\eps_{u}(k,j)-\p^\eps_{u}(k',j) \right\vert \right\rbrace  \right)^2\leq C\eps^2,
	\label{eq:quantitytobound3}
\end{multline}
where we used \eqref{e:boundpbyZ} with $s=2$ for the last bound. 
\end{proof}

\subsection{Construction and properties of the limit point} \label{sec:limitZ}

We are now able to obtain tightness of $\{\rZ_t\}$, and identify the law of any of its limit point, as follows:

\begin{proposition}[Tightness]\label{prop:tight2}
	For any $0<\delta \leqslant \tau$, the laws of $\{\rZ\}$ are tight on the Skorokhod space $D([\delta,\tau],C(\R_+))$, and moreover, any limit point $\P$ is an element of \break $C([\delta,\tau],C(\R_+))$. 
	
	For any $\theta\in[\delta,\tau]$, define $\cL_\theta:C([\delta,\tau],C(\R_+)) \to C(\R_+)$ as the \emph{evaluation map} at time $\theta$. Then, the process $\{\cL_{\theta+\delta}\; ; \; \theta \in [0,\tau-\delta]\}$ has the same distribution under $\P$ as the solution of the stochastic heat equation \eqref{eq:she} as defined in Definition \ref{de:solution}, with initial condition $\mathscr{Z}_{\rm ini}$ whose distribution is the same as the one of $\mathcal{L}_{\delta}$ under $\P$.
\end{proposition}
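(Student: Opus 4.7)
The proof proceeds by transporting the near-equilibrium framework of Section~\ref{sec:neareq} to the time interval $[\delta,\tau]$, using Proposition~\ref{prop:propmomentsdeltaprime} as a substitute for Lemma~\ref{l:Ztbounds-cont}. Once appropriate moment bounds on $\rZ$ are in place, both tightness and identification of limit points reduce to rerunning the arguments of Sections~\ref{s:tightnessneareq} and~\ref{s:limitpoints} with the initial time shifted from $0$ to $\delta$, at the cost of constants depending on $\delta$.

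For tightness, I translate \eqref{eq:Zp}, \eqref{eq:Zxxprime} and \eqref{eq:Zttprime} into macroscopic variables $s=\eps^4 t\in[\delta,\tau]$, $u=\eps^2 x$, and use $s\geq\delta$ to bound the right-hand sides: for all $n\in\N$ and $\alpha\in[0,\tfrac{1}{2})$,
\[
\|\rZ_s(u)\|_n \leq C_\delta, \qquad \|\rZ_s(u)-\rZ_s(u')\|_n \leq C_\delta |u-u'|^\alpha, \qquad \|\rZ_s(u)-\rZ_{s'}(u)\|_n \leq C_\delta\bigl(\eps^{2\alpha}\vee |s-s'|^{\alpha/2}\bigr),
\]
with $C_\delta=C(\alpha,n,\tau,\delta)$. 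These are direct analogues of \eqref{e:momentZt}--\eqref{e:momenttempsZt}, in fact \emph{stronger}, since $C_\delta$ does not depend on $u$ (so it trivially dominates the $e^{au}$ growth allowed in Definition~\ref{de:near-eq-cont}). The Arzelà--Ascoli/Kolmogorov argument applied exactly as in the proof of Proposition~\ref{p:tightness} yields tightness of $\{\rZ\}$ in $D([\delta,\tau],C(\R_+))$, and the time-increment bound forces any limit point to live in $C([\delta,\tau],C(\R_+))$.

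For the identification of limit points, I reproduce the martingale computation of Section~\ref{s:limitpoints}, applied to $\rZ$ rather than $\scZ$ (the two scalings differ only by the deterministic factor $\eps^2$, so the discrete martingale decomposition of Lemma~\ref{lem:quadratic} scales linearly). For $\phi\in\mc H$ and $\phi_\eps=\phi+\eps\phi'(0)\psi$ as in \eqref{e:defphieps}, the increments $N^\eps_t(\phi_\eps)-N^\eps_\delta(\phi_\eps)$ together with the corresponding quadratic variation, indexed by $t\in[\delta,\tau]$, are martingales. The error terms $R_1,R_2,R_3$ (drift part) and $R'_1,\dots,R'_4$ (quadratic variation) are controlled purely by the moment estimates above, so their convergence to $0$ in probability goes through verbatim as in Section~\ref{s:limitpoints}. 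Passing to a subsequence along which $\rZ\Rightarrow\P$, the shifted process $\{\cL_{\theta+\delta}\}_{\theta\in[0,\tau-\delta]}$ satisfies the continuous martingale problem of Definition~\ref{de:solution} with random initial condition distributed as $\cL_\delta$; this initial condition is itself near-equilibrium as a weak limit of near-equilibrium fields. The uniqueness of mild solutions to the SHE in the near-equilibrium class (\cite[Prop.~4.2]{parekh2017kpz}, adapted to Dirichlet boundary condition), combined with the equivalence between the mild and martingale formulations recalled before Definition~\ref{de:solution}, identifies the law of $\P$ as claimed.

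The real content of the proposition is Proposition~\ref{prop:propmomentsdeltaprime}, established in Section~\ref{subs:neareq}; the remainder is essentially bookkeeping verifying that nothing in Section~\ref{sec:neareq} relied on features of the initial condition beyond near-equilibrium moment bounds at the starting time. The mildest subtlety is checking that the limiting initial condition $\cL_\delta$ inherits the near-equilibrium property by passing to the limit $\eps\to 0$ in the estimates at time $s=\delta$.
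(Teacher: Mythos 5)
Your proposal is correct and follows essentially the same route as the paper's proof, which is very terse and simply defers to \cite[Proof of Corollary 6.3]{parekh2017kpz}: tightness and continuity from Proposition~\ref{prop:propmomentsdeltaprime} via Arzel\`a--Ascoli and Kolmogorov, then identification of the limit points by rerunning the martingale argument of Section~\ref{s:limitpoints} with $\rZ$ in place of $\scZ$ and Proposition~\ref{prop:propmomentsdeltaprime} replacing Lemma~\ref{l:Ztbounds-cont}. Your write-up is more explicit than the paper's (in particular you spell out that the time-$\delta$ moment bounds dominate the $e^{au}$ growth required in Definition~\ref{de:near-eq-cont} and that the limiting time-$\delta$ marginal inherits these bounds, justifying the appeal to uniqueness), but the underlying argument is the same.
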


\begin{proof}
	The argument is exactly the same as in \cite[Proof of Corollary 6.3]{parekh2017kpz}. The tightness property is based on Propositions \ref{p:momentZ} and \ref{prop:HolderZdelta'}, together with Arzela-Ascoli's Theorem. Continuity of limit points follow from Proposition~\ref{prop:HolderZdelta'} and Kolmogorov's continuity criterion. Finally, the identification of limit points follows from the same arguments as in Section~\ref{s:limitpoints}, replacing $\scZ$ by $\rZ$ in \eqref{eq:martingaleepsilon} (\oo{Propositions~\ref{p:momentZ}} and \ref{prop:HolderZdelta'} replacing Lemma~\ref{l:Ztbounds-cont} in the control of the error terms).
\end{proof}

The next step consists in defining a limit point in $C((0,+\infty),C(\R_+))$ (Lemma \ref{lem:limit2} below) and identifying the initial condition. This means showing that the Duhamel form of the SHE \eqref{eq:Duhamelform} is satisfied for all $t>0$, with $\mc Z_{\mathrm{ini}}=-2\delta_0'$,  that is, 
\begin{equation}
	\mc Z_t(x) = dP^{\rm Dir}_t(x,0)   + \int_0^t ds \int_{\mathbb R_+}dy P^{\rm Dir}_{t-s}(x,y) \mc Z_s(y)\xi(s,y). 
	\label{eq:Duhamelformdelta'}
\end{equation}

\begin{lemma}\label{lem:limit2}Let $\bQ^\eps$ denote the law of $\rZ$ on $C((0,+\infty),C(\R_+))$. Then, there exists a measure $\bQ$ on $C((0,+\infty),C(\R_+))$ which is a limit point of the sequence $\{\bQ^\eps\}_\eps$ on $D((0,+\infty),C(\R_+))$. 
\end{lemma}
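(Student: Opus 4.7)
My plan is to construct $\bQ$ by a standard diagonal extraction on a countable exhaustion of $(0,+\infty)$ by compact intervals, and then to glue the resulting laws together using a consistency argument.

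First, equip $D((0,+\infty),C(\R_+))$ with the natural topology of Skorokhod convergence on each compact subinterval of $(0,+\infty)$: for instance, using the countable family of intervals $I_n:=[1/n,n]$, $n\geq 1$, I would define a metric $d(\omega,\omega')=\sum_{n\geq 1} 2^{-n}\bigl(1\wedge d_n(\omega|_{I_n},\omega'|_{I_n})\bigr)$, where $d_n$ is the Skorokhod metric on $D(I_n,C(\R_+))$. With this topology, a sequence of probability measures is tight if and only if its restriction to each $D(I_n,C(\R_+))$ is tight.

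Now, for each fixed $n\geq 1$, Proposition \ref{prop:tight2} applied with $(\delta,\tau)=(1/n,n)$ asserts that the family of restrictions $\bigl\{\bQ^\eps|_{I_n}\bigr\}_\eps$ is tight on $D(I_n,C(\R_+))$ and that any limit point is concentrated on $C(I_n,C(\R_+))$. A standard diagonal extraction then produces a single sequence $\eps_k\to 0$ along which, for every $n\geq 1$, $\bQ^{\eps_k}|_{I_n}$ converges weakly to some probability measure $\bQ_n$ supported on $C(I_n,C(\R_+))$.

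For $m\geq n$, the restriction map $r_{m,n}:D(I_m,C(\R_+))\to D(I_n,C(\R_+))$ is continuous, so the continuous mapping theorem gives $\bQ_m\circ r_{m,n}^{-1}=\bQ_n$. The family $\{\bQ_n\}_{n\geq 1}$ is therefore projective. Since $C(\R_+)$ is a Polish space (with the topology of uniform convergence on compacts) and each $D(I_n,C(\R_+))$ is Polish, a Kolmogorov-type extension theorem (or equivalently the fact that any projective limit of Polish probability spaces along continuous compatible restriction maps exists, because the space $D((0,+\infty),C(\R_+))$ with the topology above is exactly the projective limit $\varprojlim D(I_n,C(\R_+))$) yields a unique measure $\bQ$ on $D((0,+\infty),C(\R_+))$ whose restriction to $D(I_n,C(\R_+))$ equals $\bQ_n$ for every $n$. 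Since $\bQ_n$ is carried by $C(I_n,C(\R_+))$ for every $n$, $\bQ$ is carried by $C((0,+\infty),C(\R_+))$. Finally, $\bQ^{\eps_k}\to\bQ$ along this subsequence in $D((0,+\infty),C(\R_+))$: for any bounded continuous test function that depends only on the trajectory restricted to some $I_n$ this follows from $\bQ^{\eps_k}|_{I_n}\Rightarrow\bQ_n$, and general bounded continuous functionals can be approximated in this way using the definition of $d$.

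The only non-routine point is the compatibility step: one must be careful that limit points are concentrated on continuous paths so that the continuous mapping theorem for the restriction maps can be applied unambiguously, and this is exactly what Proposition \ref{prop:tight2} provides. The rest is book-keeping on Polish projective limits.
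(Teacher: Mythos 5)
Your argument is essentially the one the paper invokes: the paper simply refers to Parekh's Lemma 6.5, which applies Kolmogorov's extension theorem to the projective system built from tightness and support on continuous paths on each $[\delta,\tau]$ supplied by Proposition \ref{prop:tight2}, and this is precisely your diagonal-extraction-plus-consistency construction. One small remark: the restriction maps $r_{m,n}$ are not continuous on all of $D(I_m,C(\R_+))$ (Skorokhod restriction can fail at the interval endpoints), only at continuous paths; you correctly flag this at the end, but the cleanest fix is to run the projective-limit/Kolmogorov step directly on the Polish system $\{C(I_n,C(\R_+))\}_n$, where the restrictions genuinely are continuous, and this is available because Proposition \ref{prop:tight2} concentrates each $\bQ_n$ there.
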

\begin{proof}
	This can be proved exactly as in \cite[Lemma 6.5]{parekh2017kpz} thanks to Kolmogorov's extension Theorem. 
\end{proof}

The identification of the initial condition follows a general argument, given in  \cite{parekh2017kpz}, based on estimates on the first two moments given in the next two lemmas, which however are specific to the $\delta_0'$ initial condition and use arguments different from \cite{parekh2017kpz}. We start with some exact computation for the first moment. 
\begin{lemma} \label{lem:limitmoment} Let  $\{\mathcal{Z}_t\}$ be distributed according to the measure $\mathbb Q$ defined in Lemma \ref{lem:limit2}.  
	For any $t> 0$,  and $x\in \mathbb R_+$, we have 
	\begin{equation}
		\lim_{\eps\to 0 }\mathbb E\left[  \rZ_{t}(x) \right] = \mathbb E\left[\mathcal Z_t(x)\right]  = d P_t^{\rm Dir}(x,0)  =\frac{1}{2 \pi} \int_{-\infty}^{+\infty} e^{\frac{-t}{2}\theta^2 +\mathbf i\theta x}(-4 \mathbf i \theta)d\theta,
		\label{eq:expectationinitialdata}
	\end{equation}
	where we recall that $d P_t^{\rm Dir}(x,0)$ is defined in \eqref{eq:defdP}. 
	\label{lem:expectationinitialdata}
\end{lemma}
\begin{proof}
	Using the same steps as in the proof of Proposition \ref{prop:boundinitialcondition} -- see \eqref{eq:expectationsolution} in particular,
	\begin{align}
		\mathbb E\left[  \rZ_{t}(x) \right] &= 	\eps^{-2}  \sum_{\ell\ge 0} \p_{t\eps^{-4}}^	\eps(\lfloor\eps^{-2}x\rfloor,\ell) \mu^\ell \notag\\ & = \frac{1}{2\pi}\int_{-\eps^{-2}\pi}^{\eps^{-2}\pi}   e^{\frac{1}{2}\left(e^{\I\eps^2 \theta}+e^{-\I\eps^2\theta}-2\right) \eps^{-4}t} e^{\I \theta  x} \frac{(1-e^{-2\eps})(1-e^{2\I\eps^2\theta})}{(1-e^{-\eps-\I\eps^2\theta})(1-e^{-\eps+\I\eps^2\theta})^2}  d\theta. 
		\label{eq:expectationsolution2}
	\end{align}
	Using the estimate \eqref{eq:eigenvalue}, we see that the exponential term in the integrand of \eqref{eq:expectationsolution2} is dominated by $e^{-2\theta^2 t/5}$, so that we may apply dominated convergence to take the $\epsilon\to 0$ limit. It is easy to see that the integrand converges pointwise to 
	$$ e^{-\frac{\theta^2t}{2}+\I \theta x}(-4\I\theta),$$
	so that 
	$$  \lim_{\eps\to 0} \mathbb E\left[  \rZ_{t}(x) \right] = \int_{-\infty}^{+\infty} e^{-\frac{\theta^2t}{2}+\I \theta x}(-4\I\theta)d\theta.$$
	This Gaussian type of  integral can be computed explicitly as $2\sqrt{\frac{2}{\pi }} \frac{x}{t^{3/2}} e^{-\frac{x^2}{2 t}}$, which, by the definition of the Dirichlet heat kernel from \eqref{eq:PtDir}, equals $2\partial_y P_t^{\rm Dir}(x,0)$. Furthermore, we have $\mathbb E\left[ ( \rZ_{t}(x))^2 \right]\leqslant (C/t)^2$ by \eqref{eq:Zp}, so that the sequence is uniformly integrable. Hence we may exchange the limit with the expectation, and obtain \eqref{eq:expectationinitialdata}.
\end{proof}
We can conclude from Lemma \ref{lem:expectationinitialdata} that if $\rZ_t(x)$ does converge as $\eps\to 0$ to some $\mathcal Z_t(x)$ solving the stochastic heat equation \eqref{eq:she} with some deterministic initial condition $\mc Z_{\mathrm{ini}}$, then for all $t>0$, 
\begin{equation}\label{eq:id-ini} \int_{\mathbb R_+}dy P_t^{\rm Dir}(x,y) \mc Z_{\mathrm{ini}}(y)  = dP_t^{\rm Dir}(x,0).\end{equation}
This suggests that $\mc Z_{\mathrm{ini}} =-2\delta'$. 
However, in order to rigorously identify the initial condition, we need to establish that \eqref{eq:Duhamelformdelta'} holds, and for that we will also need a second moment estimate. 
\begin{lemma}\label{lem:limit2bis}
	Fix $T>0$ and consider $\{\mathcal{Z}_t\}_{t\in{(0,T]}}$ distributed according to the measure $\mathbb Q$ defined in Lemma \ref{lem:limit2} (restricted to the time interval $(0,T]$).  There exists a constant $C=C(T)$ such that, for any $x \in\R_+$ and $0<t\leq T$, 
	\begin{align}
		\big\| \mathcal{Z}_t(x)\big\|_2^2 &  \leq C \left(dP_t^{\rm Dir}(x,0)\right)^2  \label{eq:firsttoprovecorrected} \\
		\big\|\mathcal{Z}_t(x)-dP_t^{\rm Dir}(x,0)\big\|_2^2 & \leq  C \sqrt{t} \left(dP_t^{\rm Dir}(x,0)\right)^2  , \label{eq:secondtoprovecorrected}
	\end{align} 
	where $\|F\|_2:=\sqrt{\int |F|^2 d\bQ}$.
\end{lemma}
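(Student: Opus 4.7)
The plan is to derive both bounds from an explicit contour integral representation of $\mathbb E[Z_t(x)Z_t(y)]$ obtained via the Markov duality for half-line ASEP of \cite{barraquand2022markov}, then pass to the weak asymmetry limit and identify the leading term as $\left(dP_t^{\rm Dir}(u,0)\right)^2$. Since we already have the first moment $\mathbb E[\mathcal Z_t(u)] = dP_t^{\rm Dir}(u,0)$ from Lemma \ref{lem:expectationinitialdata}, the bound \eqref{eq:secondtoprovecorrected} is equivalent to a variance estimate, and hence both inequalities will follow once one establishes the sharp form
\begin{equation}
\eps^{-4}\,\mathbb E\left[Z_{\eps^{-4}t}(\eps^{-2}u)^2\right] \;=\; \bigl(dP_t^{\rm Dir}(u,0)\bigr)^2\bigl(1+O(\sqrt{t})\bigr),
\label{eq:targetbound}
\end{equation}
uniformly in $u\in\R_+$ and $t\in(0,T]$.

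\textbf{Discrete moment formula and its scaling limit.} The empty initial condition $Z_0(z)=\mu^z$ reduces the duality to a two-particle calculation, and after taking the generating series over initial positions one expects (in analogy with \eqref{eq:formulafirstmoment}) a double contour integral of the form
\[
\mathbb E[Z_t(x)^2] \;=\; \frac{1}{(2\I\pi)^2}\oint\!\oint e^{\frac12(\xi_1+\xi_1^{-1}-2)t+\frac12(\xi_2+\xi_2^{-1}-2)t}\,\xi_1^x \xi_2^x\, K(\xi_1,\xi_2;\mu)\,\frac{d\xi_1}{\xi_1}\frac{d\xi_2}{\xi_2},
\]
where $K$ is an explicit rational kernel incorporating the two--particle boundary interactions and the $\mu$--weighted ``step'' data. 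Substituting $\xi_j=e^{\I\eps^2\theta_j}$ exactly as in the proof of Proposition \ref{prop:boundinitialcondition}, using the Gaussian domination $\cos(\eps^2\theta)-1\leq -\eps^4\theta^2/5$, and applying dominated convergence yields a pointwise limit which factorizes into two copies of the one-particle Gaussian integral identified in Lemma \ref{lem:expectationinitialdata}, plus a correction coming from the residues/poles in $K$. The product of the two one-particle integrals reproduces precisely $\bigl(dP_t^{\rm Dir}(u,0)\bigr)^2$, and a careful analysis of the interaction term — essentially an integration along the diagonal $\theta_1+\theta_2=0$ — should produce the $\sqrt{t}$ prefactor in \eqref{eq:targetbound}, consistent with Lemma \ref{lem:asymptoticsGt}.

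\textbf{Passage to the limit.} Proposition \ref{prop:propmomentsdeltaprime} provides $L^n$ bounds on $\mathcal Z_t^\eps(u)$ uniformly in $\eps$, so the family $\{(\mathcal Z_t^\eps(u))^2\}_\eps$ is uniformly integrable. Combined with the convergence in distribution along the subsequence extracted in Lemma \ref{lem:limit2}, this gives $\mathbb E[\mathcal Z_t(u)^2]=\lim_\eps \mathbb E[\mathcal Z_t^\eps(u)^2]$, so \eqref{eq:firsttoprovecorrected} follows directly from \eqref{eq:targetbound}. For \eqref{eq:secondtoprovecorrected}, the same uniform integrability together with Lemma \ref{lem:expectationinitialdata} yields $\mathrm{Var}(\mathcal Z_t(u))=\mathbb E[\mathcal Z_t(u)^2]-(dP_t^{\rm Dir}(u,0))^2$, so the bound is again a consequence of \eqref{eq:targetbound}.

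\textbf{Main obstacle.} The crux of the argument is the sharp asymptotic \eqref{eq:targetbound}, and more precisely the identification of the $\sqrt t$ prefactor in the variance. The factorization of the double integral into a product of one-particle integrals plus an explicit correction is straightforward at a heuristic level, but obtaining a uniform bound in $u$ and $t$ (including small $t$, where $dP_t^{\rm Dir}(u,0)$ blows up, and large $u/\sqrt{t}$, where it is exponentially small) requires tight control of the residue structure of $K(\xi_1,\xi_2;\mu)$ near the boundary of the contour and near the diagonal $\xi_1\xi_2=1$. This is morally the discrete analogue of the identity underlying Lemma \ref{lem:asymptoticsGt}: on the limiting side, the Duhamel form \eqref{eq:Duhamelformdelta'} combined with It\^o's isometry predicts exactly a variance of order $\sqrt{t}\,(dP_t^{\rm Dir}(u,0))^2$, so the task is to verify that this picture survives at the level of the exact discrete formula.
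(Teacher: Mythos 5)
You correctly identify the two main ingredients — the Markov duality formula from \cite{barraquand2022markov} and the uniform-integrability argument allowing passage from $\mathcal Z_t^\eps$ to the limit — and you correctly recognize that everything hinges on obtaining a uniform estimate of the form \eqref{eq:targetbound} with a $\sqrt t$ prefactor. But you then explicitly defer the verification of precisely that estimate to a ``main obstacle'' paragraph, and so the central technical content of the lemma remains unproved. Identifying the crux is not the same as resolving it, and what you offer for it (a Gaussian factorization plus a heuristic ``contribution of the diagonal $\xi_1\xi_2=1$'') is not carried through; in particular your proposed route of substituting $\xi_j = e^{\I\eps^2\theta_j}$ into a hypothetical discrete double contour formula and hunting for a $\sqrt t$ correction in the limit would require a precise understanding of the kernel $K$ near its singularities, which you acknowledge you do not have.

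The paper's actual proof sidesteps this difficulty. Rather than working at the discrete level and carrying out a delicate asymptotic, it uses the already-limited continuous double contour integral over vertical lines from \cite[Prop.~4.8]{barraquand2022markov},
\begin{equation*}
\mathbb E[\mathcal Z_t(u_1)\mathcal Z_t(u_2)] = 16 \int_{\I\R+1+\eta}\frac{dz_1}{2\I\pi}\int_{\I\R}\frac{dz_2}{2\I\pi}\,\frac{z_1-z_2}{z_1-z_2-1}\,\frac{z_1+z_2}{z_1+z_2-1}\,e^{\frac{tz_1^2}{2}-u_1z_1+\frac{tz_2^2}{2}-u_2z_2}\,z_1 z_2,
\end{equation*}
then applies the elementary trick $\frac{1}{z_1\pm z_2-1}=\int_0^\infty e^{-y(z_1\pm z_2-1)}\,dy$, which decouples the two contours and reduces the second moment to a real double integral of products of shifted copies of the one-particle Gaussian $D(t,u)=dP_t^{\rm Dir}(u,0)$. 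This integral is then evaluated \emph{in closed form} (with the aid of a computer algebra system), producing an explicit expression for $\Vert\mathcal Z_t(u)\Vert_2^2 / dP_t^{\rm Dir}(u,0)^2$ in terms of error functions; the proof concludes by observing that this ratio is maximized as $u\to 0$ and that the resulting one-variable bound behaves as $1+\frac{3\sqrt\pi}{4}\sqrt t + o(\sqrt t)$ on $[0,T]$. None of this — the contour decoupling, the closed-form evaluation, the extraction of the $1+C\sqrt t$ bound — appears in your proposal; instead you state that it ``should produce the $\sqrt t$ prefactor'' without demonstrating it. That is a genuine gap, not a stylistic difference, because the bound must hold uniformly both as $t\to 0$ (where $dP_t^{\rm Dir}$ diverges) and as $u/\sqrt t\to\infty$ (where it is exponentially small), and this uniformity is exactly what the explicit evaluation delivers.

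Two smaller points. First, your sketch introduces a putative discrete formula over circular $\xi$-contours by analogy with \eqref{eq:formulafirstmoment}, but the duality formula in \cite{barraquand2022markov} that the paper relies on is stated directly as a nested integral over vertical lines in the continuous limit; building a parallel discrete formula and re-proving its $\eps\to 0$ asymptotics would be extra work for no gain. Second, the paper also remarks that it \emph{attempted} the purely microscopic route (iterating the martingale decomposition \eqref{eq:Zmart} and seeking a discrete analogue of \eqref{eq:boundGt}) and found the required heat-kernel estimates too sharp to obtain, which is precisely the kind of difficulty your ``main obstacle'' paragraph gestures at; this is additional evidence that the obstacle you flagged is not a routine step that can be left as an exercise.
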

\begin{remark}
	The bounds \eqref{eq:firsttoprovecorrected} and \eqref{eq:secondtoprovecorrected} are different from their analogues in  \cite[Lemma 6.6]{parekh2017kpz} in the case of the Neumann boundary condition with narrow wedge initial condition. The proof of \cite[Lemma 6.6]{parekh2017kpz} uses the martingale decomposition \eqref{eq:Zmart} and \mm{an iterating argument}. These ingredients would allow to mimic at the microscopic level the construction of the solution to the SHE (similar to our Section~\ref{sec:proofshe}). In our case, we did not manage to obtain the discrete analog of \eqref{eq:boundGt} which would have been needed to complete the proof, because this required much sharper heat kernel estimates. \mm{Instead, } we opted to use the exact computation of $\lim\mathbb E\left[\mathcal Z_t^{\eps}(x)^2\right]$ as $\eps\to 0$ which was done in the separate article \cite{barraquand2022markov}.
\end{remark}
\begin{proof}
	We use \cite[Prop. 4.8]{barraquand2022markov}, recorded below as \eqref{eq:momentKPZDirichlet}. For $x_1\leq x_2$, 
	\begin{equation}
		\mathbb E[\mathcal Z_t(x_1)\mathcal Z_t(x_2)] = 4^2\!\! \int_{\I \mathbb R} \frac{dw_1}{2\I\pi}\int_{\I \mathbb R+1+\eta} \!\!\frac{dw_2}{2\I\pi} \frac{w_1-w_2}{w_1-w_2+1}\frac{w_1+w_2}{w_1+w_2-1} \prod_{i=1}^2 w_ie^{\frac{t w_i^2}{2}-u_iw_i},
		\label{eq:nestedintegral}
	\end{equation}
	where $\eta>0$ is any positive real number, and the expectation is taken with respect to the measure $\bQ$. 
	More precisely, \cite[Prop. 4.8]{barraquand2022markov} in the case $n=2$ states that under the measure $\bQ^{\eps}$ from Lemma \ref{lem:limit2},  the limit as $\eps\to 0$ of $\mathbb E[\mathcal Z_t^{\eps}(x_1)\mathcal Z_t^{\eps}(x_2)]$ is given by the right-hand side of \eqref{eq:nestedintegral}. Moreover, Proposition~\ref{p:momentZ} with Cauchy-Schwarz inequality shows that 
	higher moments  are uniformly bounded as $\eps\to 0$. This  implies that the sequence of random variables $\mathcal Z_t^{\eps}(x_1)\mathcal Z_t^{\eps}(x_2)$ is uniformly integrable as $\eps\to 0$. Hence, the weak convergence to $\mathcal Z_t(x_1)\mathcal Z_t(x_2)$  implied by Proposition \ref{prop:tight2} holds as well in $L^1$, so that \eqref{eq:nestedintegral} holds.

	The evaluation of this double Gaussian integral is not trivial. To simplify its estimation, it is convenient to use the shorthand notation
	\begin{equation}
		D(t,x) = 4\int_{\I \mathbb R} \frac{dw}{2\I\pi} e^{\frac{t w^2}{2}-x w } w=4\int_{\I \mathbb R+1+\eta}\frac{dw}{2\I\pi} e^{\frac{t w^2}{2}-x w }w = dP_t^{\rm Dir}(x,0).
		\label{eq:shorthandnotation}
	\end{equation}
	The second equality follows from Cauchy's theorem\footnote{More precisely, we may apply the Cauchy theorem on the rectangle formed by the points $-\I R$, $\I R$, $\I R+1+\eta$, $-\I R+1+\eta$. Since the function $w\mapsto w e^{\frac{t w^2}{2}-xw }$ has no residues inside the rectangle, the integral over the rectangle equal zero. Moreover, due to the exponential decay of the integrand as the imaginary part increases, we see that the difference between the integration along the segment $[-\I R, \I R]$ or the segment $[-\I R+1+\eta, \I R+1+\eta]$ goes to zero as $R$ goes to infnity. This proves that the two integral formulas in \eqref{eq:shorthandnotation} are equal.} and the lack of residues between the vertical lines $\I\mathbb R$ and $\I\mathbb R+1+\eta$.
	
	We will use that for  $\mathfrak{R}[w_1- w_2 +1]<0$ and $\mathfrak{R}[w_1+ w_2 -1]>0$, 
	$$ \frac{1}{w_1- w_2+1} = -\int_{0}^{\infty}d\lambda e^{-\lambda(w_2-w_1-1)} \text{ and }\frac{1}{w_1+ w_2-1} = \int_{0}^{\infty}d\mu e^{-\mu(w_1+w_2-1)},$$
	Plugging this into \eqref{eq:nestedintegral} and using \eqref{eq:shorthandnotation}, we obtain 
	\begin{equation*}
		\mathbb E[\mathcal Z_t(x_1)\mathcal Z_t(x_2)] =\int_{0}^{\infty} d\lambda \int_{0}^{\infty} d\mu \; e^{\lambda+\mu }\partial_{\lambda}\partial_{\mu} D(t,x_1-\lambda+\mu)D(t,x_2+\lambda+\mu), 
	\end{equation*}
	so that from the exact explicit expression for $dP_t^{\rm Dir}(u,0)$ in  \eqref{eq:defdP}, we get 
	\begin{equation*}
		\frac{\Vert\mathcal Z_t(x)\Vert_2^2}{dP_t^{\rm Dir}(x,0)^2} =\frac{1}{4}\int_{0}^{\infty} d\lambda \int_{0}^{\infty} d\mu \; e^{\lambda+\mu }\partial_{\lambda}\partial_{\mu} \left( \frac{(x+\lambda)^2-\mu^2}{x^2}e^{-\frac{\lambda^2+\mu^2+2\lambda x}{t}}\right).
	\end{equation*}
	Computing the derivatives in $\lambda$ and $\mu$, the integral can be evaluated using Matematica, which yields the explicit formula 
	\begin{multline}
		\frac{\Vert\mathcal Z_t(x)\Vert_2^2}{dP_t^{\rm Dir}(x,0)^2}	=   
		1+ \frac{t}{2x}    \\  + \frac{e^{-x}\sqrt{\pi t} e^{t/4}}{4x^2}  \left(e^x
			\left(\mathrm{erf}\left(\frac{\sqrt{t}}{2}\right)+1\right) \left(t (x-1)+2 x^2\right)+t
			e^{\frac{x^2}{t}} \left(\mathrm{erf}\left(\frac{t-2 x}{2 \sqrt{t}}\right)+1\right)\right).
		\label{eq:fonctiontu}
	\end{multline} 
	We see that the  maximum of this function is attained as $x\to 0$, so that 
	\begin{equation}
		0\leq \frac{\Vert\mathcal Z_t(x)\Vert_2^2}{dP_t^{\rm Dir}(x,0)^2}\leq \frac{1}{8} \left(\sqrt{\pi } e^{t/4} \sqrt{t} (t+6)\left( \mathrm{erf}\bigg(\frac{\sqrt{t}}{2}\bigg)+1\right)+2
		(t+4)\right).
		\label{eq:complicatedbound}
	\end{equation}
	It can be checked that this expression behaves as  $1+\frac{3\sqrt{\pi}}{4}\sqrt{t}+o(\sqrt{t})$ as $t\to 0$, so that  on an interval $[0,T]$, \eqref{eq:complicatedbound} is bounded by $1+C \sqrt{t}$ where the constant $C$ depend on $T$. This immediately implies \eqref{eq:firsttoprovecorrected}, and, using Lemma \ref{lem:expectationinitialdata}, it also implies \eqref{eq:secondtoprovecorrected}. 
\end{proof}

\subsection{Conclusion} \label{sec:conclusion}
Provided with Proposition \ref{prop:tight2} and Lemma \ref{lem:limit2}, one can follow the argument of \cite[Lemma 6.7 and Theorem 6.8]{parekh2017kpz}, and one obtains that $\{\mathcal{Z}_s^\eps\}_{s\in{(0,T]}}$ converges as $\eps \to 0$ to a solution of the stochastic heat equation \eqref{eq:she} on the time interval $[0,T]$ with Dirichlet boundary condition, in the sense of weak convergence of probability measures on the path space $D({(0,T]},C(\R_+))$ endowed with the Skorokhod topology.

It remains to formally check that the initial condition is $\mathcal Z_{\rm ini}=-2\delta_0'$ as we have claimed. We need to check that the limit of $\{\mathcal{Z}_s^\eps\}_{s\in{(0,T]}}$, denoted $\{\mathcal{Z}_s\}_{s\in{(0,T]}}$, satisfies \eqref{eq:Duhamelformdelta'}. Since $\mathcal{Z}_s$ is a solution of \eqref{eq:SHE} on the space $D({(0,T]},C(\R_+))$, we already know that for any $0<s<t<T$,  
\begin{equation*}
	\mathcal Z_t(x) = P_{t-s}^{\rm Dir}(x,\cdot)\ast \mathcal Z_s(\cdot) + \int_s^t d\tau \int_{\mathbb R_+} dy   P_{t-\tau}^{\rm Dir}(x,y) \mathcal Z_{\tau}(y) \xi(\tau,y). 
\end{equation*}
Hence, following \cite[Section 6]{parekh2017kpz}, we may write 
\begin{multline} 
	\left\Vert \mathcal Z_t(x) -dP_t^{\rm Dir}(x,0) - \int_0^t d\tau \int_{\mathbb R_+} dy   P_{t-\tau}^{\rm Dir}(x,y) \mathcal Z_{\tau}(y) \xi(\tau,y) \right\Vert_2 \\ \leq  
	\left\Vert P_{t-s}^{\rm Dir}(x,\cdot)\ast \mathcal Z_s(\cdot)  - dP_t^{\rm Dir}(x,0) \right\Vert_2 +\left\Vert \int_0^s d\tau \int_{\mathbb R_+} dy   P_{t-\tau}^{\rm Dir}(x,y) \mathcal Z_{\tau}(y) \xi(\tau,y) \right\Vert_2. 
	\label{eq:twothingstobound}
\end{multline} 
This inequality holds for any $0<s<t$, so that in order to show that the LHS of \eqref{eq:twothingstobound} equal zero, it suffices to show that the RHS vanishes as $s\to 0$. Using the semi-group property,   
\begin{align*}
	\left\Vert P_{t-s}^{\rm Dir}(x,\cdot)\ast \mathcal Z_s(\cdot)  - dP_t^{\rm Dir}(x,0) \right\Vert_2 &\leq   \left\Vert   \int_{\mathbb R_+} dy   P_{t-s}^{\rm Dir}(x,y) \left( \mathcal Z_{s}(y) - dP^{\rm Dir}_s(y,0)\right)\right\Vert_2\\
	&\leq  \int_{\mathbb R_+} dy   P_{t-s}^{\rm Dir}(x,y)  \left\Vert  \mathcal Z_{s}(y) - dP^{\rm Dir}_s(y,0)\right\Vert_2 \\
	&\leq  \int_{\mathbb R_+} dy   P_{t-s}^{\rm Dir}(x,y)  s^{1/4 }dP^{\rm Dir}_s(y,0) \\
	&\leq s^{1/4 } dP^{\rm Dir}_t({x},0)
\end{align*}
where in the third inequality we have used Lemma \ref{lem:limit2bis}. Hence we have obtained that 
$$ \lim_{s\to 0} \left\Vert P_{t-s}^{\rm Dir}(x,\cdot)\ast \mathcal Z_s(\cdot)  - dP_t^{\rm Dir}(x,0) \right\Vert_2 =0. $$
Now we turn to the second term to bound. By It\^o isometry, 
\begin{align*}
	\left\Vert \int_0^s d\tau \int_{\mathbb R_+} dy   P_{t-\tau}^{\rm Dir}(x,y) \mathcal Z_{\tau}(y) \xi(\tau,y) \right\Vert_2^2 &= \int_0^s d\tau \int_{\mathbb R_+} dy P_{t-\tau}^{\rm Dir}(x,y)^2 \mathbb E\left[\mathcal Z_{\tau}(y)^2 \right]\\
	&\leq C \int_0^s d\tau \int_{\mathbb R_+} dy P_{t-\tau}^{\rm Dir}(x,y)^2 {dP_{\tau}^{\rm Dir}}(y,0)^2\\
	&= C \int_0^s d\tau {G_t(\tau,x)} \\
	&\leq C \sqrt{\frac{t}{t-s}}\int_0^s d\tau \frac{1}{\sqrt{\tau}} \,{dP_{\tau}^{\rm Dir}}(x,0)^2\\
	&\leq C\sqrt{\frac{st}{t-s}}\,{dP_{\tau}^{\rm Dir}}(x,0)^2,
\end{align*} 
where in the first inequality we have used Lemma \ref{lem:limit2bis}, and in the second inequality we have used 
Lemma \ref{lem:asymptoticsGt} (recall that the function ${G_t(\tau,x)}$ is defined in \eqref{eq:defGt}). We conclude that 
$$ \lim_{s\to 0}\left\Vert \int_0^s d\tau \int_{\mathbb R_+} dy   P_{t-\tau}^{\rm Dir}(x,y) \mathcal Z_{\tau}(y) \xi(\tau,y) \right\Vert_2 = 0,$$
so that \eqref{eq:Duhamelformdelta'} is satisfied for all $0<t\leq T$. This concludes the proof of Theorem \ref{thm:conv}.

\appendix

\section{Microscopic Cole-Hopf transform} \label{app:comp}

Here we explain our choice of parameters \eqref{eq:choice} which permits to obtain the discrete stochastic heat equation as in \eqref{eq:discDelta}.
We observe that $Z_t(x)$ is affected only by exchanging values $\sigma(k)$, $\sigma(k+1)$. Therefore, one can check that
\[ \mathcal{L}Z_t(k) =\Big[p\sigma_t(k)(1-\sigma_t(k+1))(e^{2\lambda}-1)+q\sigma_t(k+1)(1-\sigma_t(k))(e^{-2\lambda}-1)\Big]Z_t(k)\] for any $k>0$ and moreover
\[\mathcal LZ_t(0)=p(1-\sigma_t(1))(e^{2\lambda}-1)Z_t(0).\]
Besides, the discrete Laplacian acts as:
\[\Delta Z_t(k)=\big[e^{-\lambda(2\sigma_t(k+1)-1)}+e^{\lambda(2\sigma_t(k)-1)}-2\big]{Z_t}(k)\] for any $k>0$.
By identification we obtain the following conditions
\begin{eqnarray}
	\nu&=&D(e^{\lambda}+e^{-\lambda}-2)\\
	\nu&=&D(2e^\lambda-2)-p(e^{2\lambda}-1)\\
	\nu&=&D(2e^{-\lambda}-2)-q(e^{-2\lambda}-1).
\end{eqnarray}
which, after resolution, give
\begin{equation}\label{e:choixpar}
	\lambda=\frac{1}{2}\log\frac{q}{p},\quad D=\sqrt{pq},\quad \nu=q+p-2\sqrt{pq}.
\end{equation}
Finally, we need to define ${Z_t}(-1)$ such that, at the boundary point $k=0$, we get $(\nu {Z_t}(0)+\mathcal L{Z_t}(0))=D\Delta {Z_t}(0)$. This gives
\begin{eqnarray}
	\nu&=&D\left(\sqrt{\frac{p}{q}}-2+\frac{{Z_t}(-1)}{{Z_t}(0)}\right)\quad\text{if ${\sigma_t}(1)=1$}\\
	\nu+q-p&=&D\left(\sqrt{\frac{q}{p}}-2+\frac{{Z_t}(-1)}{{Z_t}(0)}\right)\quad\text{if ${\sigma_t}(1)=0$}.
\end{eqnarray}
With the choice of $D$ made in  \eqref{e:choixpar}, the last two conditions are in fact the same, and read
\begin{equation}
	{Z_t}(-1)=\mu\; {Z_t}(0) \qquad \text{with } \mu=\sqrt{\frac{q}{p}}.
\end{equation}
\section{Proof of Lemma \ref{l:oriane} \cite{bertini1997stochastic}}\label{a:bertini-giacomin}
	Let us fix $k\in\N$ and $t \in [0,\varepsilon^{-4}T]$. We denote by $(\mathcal{F}_s)_{s\in\R_+}$ the natural filtration associated to the process $(Z_s)_{s\in\R_+}$. Recall that $Z_t(k)=e^{-\varepsilon h_t(k)+\nu t}$. We first bound
	\begin{equation}\label{e:estimoriane1} \E \bigg[ \sup_{s\in[0,1]} \big|Z_{t+s}(k)-Z_t(k)\big|^{2n}  \bigg] \le \E \bigg[ \big|Z_t(k)\big|^{2n}  \sup_{s\in[0,1]} \big| e^{-\varepsilon(h_{t+s}(k)-h_t(k))+\nu s }-1 \big|^{2n}\bigg]. \end{equation} Let us denote by $\mathcal{Q}$ the number of jumps of the height at site $k$ between $t$ and $t+1$, which satisfies: for all $s\in[0,1]$, 
	\[  -2\mathcal{Q} \le h_{t+s}(k)-h_t(k) \le 2 \mathcal{Q}. \] Therefore, for all $s\in[0,1]$
	\[\big| e^{-\varepsilon(h_{t+s}(k)-h_t(k))+\nu s }-1 \big| \le e^{2\varepsilon \mathcal{Q} + \nu}-1. \] Inserting this into \eqref{e:estimoriane1} we get
	\begin{align}
		\E \bigg[ \sup_{s\in[0,1]} \big|Z_{t+s}(k)-Z_t(k)\big|^{2n}  \bigg] & = \E\bigg[ \E \bigg[ \sup_{s\in[0,1]} \big|Z_{t+s}(k)-Z_t(k)\big|^{2n}   \Big| \mathcal{F}_t\bigg]\bigg]  \notag\\
		& \le  \E\bigg[   \big|Z_t(k)\big|^{2n} \;  \E \Big[\big| e^{2\varepsilon \mathcal{Q} + \nu}-1\big|^{2n}   \Big| \mathcal{F}_t\Big]\bigg]. \label{eq:supbound}
	\end{align}
Note that $\mathcal{Q}$ is dominated by a Poisson random variable of parameter $2$ which is independent of $\mathcal{F}_t$. Therefore, \oo{with $M=\lfloor|\log(\eps)|\rfloor$,
\begin{align}
	 \E \Big[\big| e^{2\varepsilon \mathcal{Q} + \nu}-1\big|^{2n}   \Big| \mathcal{F}_t\Big] & \le \sum_{\ell=0}^{+\infty} \frac{2^\ell}{\ell!} \big( e^{2\varepsilon \ell + \nu}-1\big)^{2n}  \notag \\
 & \leqslant\sum_{\ell=0}^{M-1}\frac{2^\ell}{\ell!}(e^{\nu+2\eps \ell }-1)^{2n}+\sum_{\ell=M}^{\infty}\frac{2^\ell}{\ell!}(e^{\nu+2\eps \ell }-1)^{2n}\\
  &\leq \sum_{\ell=0}^{M-1}\frac{2^\ell}{\ell!}(10\eps \ell)^{2n}+e^{n\nu}\frac{(e^{6\eps n})^{M}}{M!}e^{e^{6\eps n}}\\
    &\leq(10\eps)^{2n}\sum_{\ell=0}^\infty \ell^{2n}\frac{2^\ell}{\ell!}+C(n)\frac{e^{6\eps nM}}{M!}\\
    &=O(\eps^{2n}),
\end{align}
by the choice of $M$.
Injecting this estimate inside \eqref{eq:supbound} yields the result.}

\section{Proof of the H\"{o}lder estimate in the $\eps\to 0$ limit for Delta prime initial condition}\label{app:Holderlimit}

\mm{In this appendix, we find interesting to show by an independent argument that the H\"older estimate \eqref{eq:Zxxprime} indeed holds in the $\eps\to 0$ limit, even if the following estimate is not sufficient to prove the tightness result.}

 Let us prove that for $t\in [\delta,T]$,  $\alpha\in (0,\frac12]$, and $n$ even, 
\begin{equation}
	\sup_{x,y\in \mathbb R_+}\left\lbrace \frac{\lim_{\eps\to 0} \mathbb E\left[(\mathcal Z^{\eps}_t(x)-\mathcal Z^{\eps}_t(y))^n\right]}{\vert x-y\vert^{\alpha n}} \right\rbrace <C\label{eq:boundholderlimit}
\end{equation}
where $C$ is a constant depending on $n, \delta, T, \alpha$. 
Given the bound \eqref{eq:Zp} (or more precisely, its $\eps\to 0$ limit), it suffices to consider the case $\alpha=1/2$, or any $\alpha\geq 1/2$, and we may also restrict the supremum to $x,y$ being close to each other.

From \cite[Prop. 4.8]{barraquand2022markov}, we have, for any $0\leq x_1\leq \dots \leq x_n$, 
\begin{multline}
 \lim_{\eps\to 0} \mathbb E\left[\prod_{i=1}^n \mathcal Z^{\eps}_t(x_i)\right]\\= 4^n\int_{r_1+\I\R} \frac{dw_1}{2\I\pi} \dots \int_{r_n+\I\R} \frac{dw_n}{2\I\pi} \prod_{i<j}\frac{w_i-w_j}{w_i-w_j+1}\frac{w_i+w_j}{w_i+w_j-1} \prod_{i=1}^n w_i e^{\frac{t w_i^2}{2}-x_i w_i},
	\label{eq:momentKPZDirichlet}
\end{multline}  
where $0=r_1<r_2-1<\dots <r_n-n+1$.  
Hence, we may write 
\begin{multline}
\lim_{\eps\to 0} 	\mathbb E\left[(\mathcal Z^{\eps}_t(x)-\mathcal Z^{\eps}_t(y))^n\right] = \\
	4^n\int_{r_1+\I\R} \frac{dw_1}{2\I\pi} \dots \int_{r_n+\I\R} \frac{dw_w}{2\I\pi} \prod_{i<j}\frac{w_i-w_j}{w_i-w_j+1}\frac{w_i+w_j}{w_i+w_j-1} \prod_{i=1}^n w_i e^{\frac{t w_i^2}{2}-xw_i}F_{y-x}(w_1, \dots, w_n)
	\label{eq:diffpowern}
\end{multline}
where the function $F_{\eta}(\vec w)$ is defined by 
$$ F_{\eta}(\vec w) = 1-\binom{n}{1}e^{-\eta w_n}+\binom{n}{2} e^{-\eta(w_n+w_{n-1})}-\dots +(-1)^n \binom{n}{n}e^{-\eta (w_1+\dots+w_n)}.$$
We need to consider the  fine behaviour of the function $F_{\eta}(\vec w)$ as $\eta$ goes to zero. Of course, if we could show that  $F_{\eta}(\vec w)$ is bounded as $\eta\to 0$ by $\vert \eta\vert^{n/2}$ times a bounded function of $\vec w$, it would not be too difficult to prove the bound \eqref{eq:boundholderlimit}. But the function $F_{\eta}(\vec w)$ does not satisfy such an inequality. It is an analytic function of the variable $\eta$, whose Taylor coefficients at $\eta=0$ can be readily computed for small values of $n$ and they are all non-zero except for the first one. However, we will show that the contribution of the first $\lceil n/2\rceil$ of those Taylor coefficients has zero contribution to the integral \eqref{eq:diffpowern}. Let us consider the (unique) decomposition 
\begin{equation}
	F_{\eta}(\vec w) = \sum_{i=0}^{\lceil n/2\rceil-1} \eta^i F(\vec w;i) \;\; + \; \eta^{\lceil n/2\rceil} H_{\eta}(\vec w), 
	\label{eq:Taylordecomposition}
\end{equation}
where the Taylor coefficients $F(\vec w;i)$ are polynomials in the variables $w_i$ and the function $H_{\eta}(\vec w)$ is analytic in $\eta$ with no singularity at zero. 
It is convenient to introduce the functional 
\begin{multline} I_n[F,x]:= \\4^n\int_{r_1+\I\R} \frac{dw_1}{2\I\pi} \dots \int_{r_n+\I\R} \frac{dw_w}{2\I\pi} \prod_{i<j}\frac{w_i-w_j}{w_i-w_j+1}\frac{w_i+w_j}{w_i+w_j-1} \prod_{i=1}^n w_i e^{\frac{t w_i^2}{2}-xw_i}F(w_1, \dots, w_n).
\label{eq:defIn}
\end{multline}
so that \eqref{eq:diffpowern}  is nothing but $I_n[F_{y-x},x]$. Using \eqref{eq:Taylordecomposition}, we have the decomposition
$$I_n[F_{\eta},x] = \sum_{i=0}^{\lceil n/2\rceil-1} \eta^i I_n[F(\vec w;i),x] \;\; + \; \eta^{\lceil n/2\rceil} I_n[H_{\eta}, x].$$
From the definition of $F_{\eta}(\vec w)$ we know that $H_{\eta}(\vec w)$ is holomorphic in each of the variables $w_i$ and grows at most as a polynomial of degree $\lceil n/2\rceil$ as the variables $w_i$ vary over the vertical contours $r_i+\I\R$ (and $H_{\eta}(\vec w)$ can be estimated using Taylor's theorem with integral remainder, which is valid for complex-valued functions). Hence, due to the Gaussian decay of the term $e^{tw_i^2/2}$ and the fact that $t\in [\delta,T]$, we find that the modulus of $I_n[H_{\eta},x]$ is bounded by a constant (uniformly for $x\in \mathbb R_+$ and $\eta$ in a compact set). Then, assuming that for all $0\leq i\leq \lceil n/2\rceil -1$, $I_n[F(\vec w;i),x]=0$, we conclude that 
$$ \vert I_n[F_{\eta},x] \vert \leq C   \vert \eta\vert^{\lceil n/2\rceil} $$
so that \eqref{eq:boundholderlimit} holds.

Now we will use an algebraic argument to explain why, indeed, for all $0\leq i\leq \lceil n/2\rceil -1$, $I_n[F(\vec w;i),x]=0$. The Taylor coefficients are given explicitly by 
\begin{equation}
	F(\vec w;i) = \frac{(-1)^i}{i!}\sum_{k=0}^n \binom{n}{k}(-1)^k \bigg( \sum_{j=n-k+1}^n w_j \bigg)^i.
\end{equation}
\begin{lemma}
Fix $n\in \mathbb N$. For all $0\leq i\leq \lceil n/2\rceil -1$, we may write the polynomial $	F(\vec w;i)$ as 
\begin{equation}
		F(\vec w;i) = \sum_{j=1}^{n-1} (w_j-w_{j+1}+1)f_j(\vec w; i)
		\label{eq:decompositionarithmetic}
\end{equation}
where the polynomials $f_j(\vec w; i)$ are symmetric with respect to exchanging $w_j$ and $w_{j+1}$. 
\label{lem:decomposition}
\end{lemma} 
The statement is trivial when $i=0$ since in that case, $F(\vec w;0)=0$. Hence, for $n=1$ and $n=2$, the lemma is trivial. The first non trivial case arises for $n=3$ and $i=1$. In that case, we find that 
$$ F(\vec w;1) = w_1-2w_2+w_3 $$ which can indeed be decomposed as 
$$ F(\vec w;1) = (w_1-w_2+1) -  (w_2-w_3+1),$$
so that the polynomials $f_1$ and $f_2$ can be chosen as  $f_1(\vec w;1)=1$ and $f_2(\vec w;1)=-1$. 
Before proving Lemma \ref{lem:decomposition} in general, let us see how it can be used. 

Fix $n\in \mathbb N$ and  $0\leq i\leq \lceil n/2\rceil -1$. Using the decomposition \eqref{eq:decompositionarithmetic}, in order to prove that $I_n[F(\vec w;i),x]=0$, it suffices to prove that  for all $1\leq j\leq n-1$, 
$$I_n[(w_j-w_{j+1}+1)f_j(\vec w; i),x]=0.$$
The factor $(w_j-w_{j+1}+1)$ cancels the same factor present in the denominator in the integral \eqref{eq:defIn}. Hence, the integrand has no singularity at $w_{j+1}=w_j+1$ anymore. This implies that we may freely deform the contour of the variable $w_{j+1}$, originally chosen as $r_{j+1}+\I \R$ with $r_{j+1}>r_j+1$, to become the same as the contour for $w_j$, that is $r_j+\I\R$. Once the variables $w_j$ and $w_{j+1}$ are integrated along the same contour, we see by a symmetry argument that the integral vanishes: the integrand may be written as $(w_j-w_{j+1})$ multiplied by a function which is symmetric with respect to exchanging $w_j$ and $w_{j+1}$ (hence the importance of the assumption on $f_j$ in Lemma \ref{lem:decomposition}). The integral of such function must be zero when the two integration domains are the same. Thus, we have shown that all $I_n[(w_j-w_{j+1}+1)f_j(\vec w; i),x]=0$, which implies that $I_n[F(\vec w;i),x]=0$. It only remains to prove Lemma \ref{lem:decomposition}. 

\begin{proof}[Proof of Lemma \ref{lem:decomposition}] 
	It is convenient to reorder the variables and prove  the following, slightly stronger, statement (below $\alpha$ is any real number):

For all $n\geq 2i+1$, the polynomial	
	\begin{equation} P_i(\vec w) := \sum_{k=0}^n \binom{n}{k}(-1)^k \bigg( \sum_{j=1}^k w_j \bigg)^i.
		\label{eq:defPi}
		\end{equation}
can be decomposed as 
\begin{equation}
	 P_i(\vec w) = \sum_{j=1}^{n-1} (w_{j+1}-w_{j}+\alpha)p_j(\vec w; i)
	\label{eq:decompositionP}
\end{equation}
where the polynomials $p_j(\vec w; i)$ are symmetric with respect to exchanging $w_j$ and $w_{j+1}$.

We will prove this statement by induction on $i$. 
Assume first that $i=0$ and consider any $n\geq 1$. 
Then, as we have seen, 
$$ P_0(\vec w) = \sum_{j=0}^n \binom{n}{j}(-1)^j=0 $$
so that $P_0(\vec w)$  trivially satisfies a decomposition of the form \eqref{eq:decompositionP}.


%

Let us now assume that $i>1$ and consider any $n\geq 2i+1$. It is convenient to use the notation 
$S_k:= \sum_{j=1}^k w_j$. We will use the integration by parts formula 
\begin{equation}
	\sum_{k=0}^n a_kb_k = -\sum_{k=1}^{n}\bigg( (b_k-b_{k-1})\sum_{j=0}^{k-1}a_j \bigg)+ b_n\sum_{j=0}^n a_j.
	\label{eq:discreteIPP}
\end{equation}
We obtain 
\begin{align*}
P_i(\vec w) = \sum_{k=1}^n \binom{n}{k}(-1)^{k} S_k^{i-1} S_k 
 &=- \sum_{k=1}^{n} w_k\left( \sum_{j=0}^{k-1}\binom{n}{j}(-1)^j S_j^{i-1}  \right) + S_n P_{i-1}(\vec w) \\
 &= - \sum_{k=0}^{n-1} w_{k+1}\left( \sum_{j=0}^{k}\binom{n}{j}(-1)^j S_j^{i-1}  \right) + S_n P_{i-1}(\vec w)\\
 &= \sum_{k=1}^{n-1}(w_{k+1}-w_k)f_k+ w_n f_n + S_n P_{i-1}(\vec w)
\end{align*}
where in the first line, we have used the integration by parts \eqref{eq:discreteIPP} (with $a_k=\binom{n}{k}(-1)^{k} S_k^{i-1}$ and $b_k=S_k$), in the second line we have used a simple change of variables, and in the last line we have used again the identity \eqref{eq:discreteIPP}, letting
\begin{equation}
	f_k:= \sum_{\ell=0}^{k-1}\sum_{j=0}^{\ell} \binom{n}{j}(-1)^j S_j^{i-1}= \sum_{j=0}^{k-1} (k-j)\binom{n}{j}(-1)^j S_j^{i-1}.
	\label{eq:deffk}
\end{equation} 
The sequence $f_k$ satisfies two convenient properties: first, 
\begin{align*}
	f_n=\sum_{j=0}^{n-1} (n-j)\binom{n}{j}(-1)^j S_j^{i-1} =n \sum_{j=0}^{n-1} \binom{n-1}{j}(-1)^j S_j^{i-1} =nP_{i-1}(w_1, \dots, w_{n-1}).
\end{align*}
Second, 
\begin{align*}
	 \sum_{k=1}^{n-1}f_k = \sum_{k=1}^{n-1} \sum_{j=0}^{k-1} (k-j)\binom{n}{j}(-1)^j S_j^{i-1} 
	 &=  \sum_{j=0}^{n-2} \frac{(n-j)(n-j-1)}{2} \binom{n}{j}(-1)^j S_j^{i-1}\\
	 &= \frac{n(n-1)}{2} \sum_{j=0}^{n-2} \binom{n-2}{j}(-1)^j S_j^{i-1}\\
	 &= \frac{n(n-1)}{2} P_{i-1}(w_1, \dots, w_{n-2}),
\end{align*}
where in the first line we have just exchanged the order of summation and  in the second line we have simplified the binomial coefficient.

To summarize, we arrive at the identity 
\begin{multline}
	P_i(w_1, \dots, w_n) = S_nP_{i-1}(w_1, \dots, w_n) + w_n n  P_{i-1}(w_1, \dots, w_{n-1}) \\ - \alpha \frac{n(n-1)}{2} P_{i-1}(w_1, \dots, w_{n-2}) + \sum_{k=1}^{n-1}(w_{k+1}-w_k+\alpha)f_k .
	\label{eq:formulaPi}
\end{multline} 
Since the $S_j$'s only depend on the variables $w_1, \dots, w_j$, it is clear from \eqref{eq:deffk} that $f_k$ only depends  on the variables $w_1, \dots, w_{k-1}$, hence it is symmetric with respect to exchanging $w_k$ and $w_{k+1}$. This implies that the sum $\sum_{k=1}^{n-1}(w_{k+1}-w_k+\alpha)f_k$ is a decomposition of the desired form \eqref{eq:decompositionP}. 

For $n\geq 2i+1$, we have $n-2\geq 2(i-1)+1$ so that using the induction hypothesis, we know that  $P_{i-1}(w_1, \dots, w_{n-2})$ 
admits a decomposition of the form \eqref{eq:decompositionP}. 
The induction hypothesis also implies that $S_nP_{i-1}(w_1, \dots, w_n)$ and $ w_n P_{i-1}(w_1, \dots, w_{n-1})$ satisfy decompositions of the form \eqref{eq:decompositionP}, since $S_n$ is symmetric in all variables and $w_n$ is symmetric with respect to exchanging $w_j$ and $w_{j+1}$ for all $j<n-1$. 

Thus, we have shown that $P_i(w_1, \dots, w_n)$ is the sum of polynomials satisfying a decomposition of the form \eqref{eq:decompositionP}, so that $P_i(w_1, \dots, w_n)$ admits such a decomposition as well, which concludes the proof of \eqref{eq:boundholderlimit}. 
\end{proof}





\begin{thebibliography}{10}
	
	\bibitem{alberts2014intermediate}
	T.~Alberts, K.~Khanin, and J.~Quastel, \emph{The intermediate disorder regime
		for directed polymers in dimension $1+ 1$}, Ann. Probab. \textbf{42} (2014),
	no.~3, 1212--1256.
	
	\bibitem{amir2011probability}
	G.~Amir, I.~Corwin, and J.~Quastel, \emph{Probability distribution of the free
		energy of the continuum directed random polymer in 1 + 1 dimensions}, Comm.
	Pure Appl. Math. \textbf{64} (2011), no.~4, 466--537.
	
	\bibitem{ayyer2020stationary}
	A.~Ayyer, S.~Goldstein, J.~L. Lebowitz, and E.~R. Speer, \emph{Stationary
		states of the one-dimensional facilitated asymmetric exclusion process}, J.
	Math. Phys. \textbf{63} (2022).
	
	\bibitem{baik2018FTASEP}
	J.~Baik, G.~Barraquand, I.~Corwin, and T.~Suidan, \emph{Facilitated exclusion
		process}, Computation and Combinatorics in Dynamics, Stochastics and Control
	(Cham) (Elena Celledoni, Giulia Di~Nunno, Kurusch Ebrahimi-Fard, and
	Hans~Zanna Munthe-Kaas, eds.), Springer International Publishing, 2018,
	pp.~1--35.
	
	\bibitem{baik2018pfaffian}
	\bysame, \emph{{P}faffian {S}chur processes and last passage percolation in a
		half-quadrant}, Ann. Probab. \textbf{46} (2018), no.~6, 3015--3089.
	
	\bibitem{baik2001algebraic}
	J.~Baik and E.~M. Rains, \emph{Algebraic aspects of increasing subsequences},
	Duke Math. J. \textbf{109} (2001), no.~1, 1--65.
	
	\bibitem{barraquand2018half}
	G.~Barraquand, A.~Borodin, and I.~Corwin, \emph{Half-space {M}acdonald
		processes}, Forum Math Pi (2020).
	
	\bibitem{barraquand2018stochastic}
	G.~Barraquand, A.~Borodin, I.~Corwin, and M.~Wheeler, \emph{Stochastic
		six-vertex model in a half-quadrant and half-line open asymmetric simple
		exclusion process}, Duke Math. J. \textbf{167} (2018), no.~13, 2457--2529.
	
	\bibitem{barraquand2022markov}
	G.~Barraquand and I.~Corwin, \emph{Markov duality and {B}ethe ansatz formula
		for half-line open {ASEP}}, Probab. Math. Phys. \textbf{5} (2024), no.~1,
        89--129.
	
	\bibitem{barraquand2022stationary}
	\bysame, \emph{Stationary measures for the log-gamma polymer and {KPZ} equation
		in half-space}, Ann. Probab. \textbf{51} (2023), no.~5, 1830--1869.
	
	\bibitem{basu2009FTASEP}
	U.~Basu and P.~K. Mohanty, \emph{Active--absorbing-state phase transition
		beyond directed percolation: A class of exactly solvable models}, Phys. Rev.
	E \textbf{79} (2009), 041143.
	
	\bibitem{bertini1997stochastic}
	L.~Bertini and G.~Giacomin, \emph{Stochastic {B}urgers and {KPZ} equations from
		particle systems}, Comm. Math. Phys. \textbf{183} (1997), no.~3, 571--607.
	
	\bibitem{billingsley1968convergence}
	P.~Billingsley, \emph{Convergence of probability measures}, John Wiley \& Sons,
	1968.
	
	\bibitem{blondel2020FEP}
	O.~Blondel, C.~Erignoux, M.~Sasada, and M.~Simon, \emph{{Hydrodynamic limit for
			a facilitated exclusion process}}, Ann. Inst. Henri Poincar\'e, Probab. Stat.
	\textbf{56} (2020), no.~1, 667 -- 714.
	
	\bibitem{blondel2021FEP}
	O.~Blondel, C.~Erignoux, and M.~Simon, \emph{Stefan problem for a nonergodic
		facilitated exclusion process}, Probab. Math. Phys. \textbf{2} (2021), no.~1,
	127--178.
	
	\bibitem{blondel2016}
	O.~Blondel, P.~Gon\c{c}alves, and M.~Simon, \emph{Convergence to the stochastic
		{B}urgers equation from a degenerate microscopic dynamics}, Electron. J.
	Probab. \textbf{21} (2016), 25 pp.
	
	\bibitem{borodin2016directed}
	A.~Borodin, A.~Bufetov, and I.~Corwin, \emph{Directed random polymers via
		nested contour integrals}, Ann. Phys. \textbf{368} (2016), 191--247.
	
	\bibitem{chen2018limiting}
	D.~Chen and L.~Zhao, \emph{The limiting behavior of the {FTASEP} with product
		{B}ernoulli initial distribution}, arXiv:1808.10612.
	
	\bibitem{corwin2020stochastic}
	I.~Corwin, P.~Ghosal, H.~Shen, and L.~Tsai, \emph{Stochastic {PDE} limit of the
		six vertex model}, Commu. Math. Phys. \textbf{375} (2020), no.~3, 1945--2038.
	
	\bibitem{corwin2016open}
	I.~Corwin and H.~Shen, \emph{Open {ASEP} in the weakly asymmetric regime},
	Comm. Pure Appl. Math. \textbf{71} (2018), no.~10, 2065--2128.
	
	\bibitem{DAES} 
	H.~Da Cunha, C.~Erignoux and M.~Simon, \emph{Hydrodynamic limit for an open facilitated exclusion process with slow and fast boundaries}. arXiv:2401.16535
	
	\bibitem{dembo2016weakly}
	A.~Dembo and L.~Tsai, \emph{Weakly asymmetric non-simple exclusion process and
		the {Kardar--Parisi--Zhang} equation}, Comm. Math. Phys. \textbf{341} (2016),
	no.~1, 219--261.
	
	\bibitem{ESZ}
	C.~Erignoux, M.~Simon and L.~Zhao, \emph{Mapping hydrodynamics
		for the facilitated exclusion and zero-range processes}, Ann. Appl. Probab. \textbf{34} (2024), no.~1B, 1524--1570.

    \bibitem{EZ23}
    C.~Erignoux and L.~Zhao, \emph{Stationary fluctuations for the facilitated exclusion process}, arXiv:2305.13853.
	
	\bibitem{gartner1987transform}
	J.~G~\!\!\"artner, \emph{Convergence towards {B}urger's equation and
		propagation of chaos for weakly asymmetric exclusion processes}, Stoch. Proc.
	Appl. \textbf{27} (1987), 233--260.
	
	\bibitem{gabel2010FEP}
	A.~Gabel, P.~L. Krapivsky, and S.~Redner, \emph{Facilitated asymmetric
		exclusion}, Phys. Rev. Lett. \textbf{105} (2010), 210603.
	
	\bibitem{gerencser2017singular}
	M.~Gerencs{\'e}r and M.~Hairer, \emph{Singular {SPDE}s in domains with
		boundaries}, Probab. Theor. Rel. Fields \textbf{173} (2019), no.~3-4,
	697--758.
	
	\bibitem{goldstein2019FTASEP}
	S.~Goldstein, J.~L. Lebowitz, and E.~R. Speer, \emph{Exact solution of the
		facilitated totally asymmetric simple exclusion process}, J. Stat. Mech.:
	Theor. Exp. \textbf{2019} (2019), no.~12, 123202.
	
	\bibitem{goldstein2021FTASEP}
	\bysame, \emph{The discrete-time facilitated totally asymmetric simple
		exclusion process}, Pure Appl. Funct. Anal. \textbf{6} (2021), 177--203.
	
	\bibitem{gonccalves2017nonequilibrium}
	P.~Gon{\c{c}}alves, C.~Landim, and A.~Milan{\'e}s, \emph{Nonequilibrium
		fluctuations of one-dimensional boundary driven weakly asymmetric exclusion
		processes}, Ann. Appl. Probab. \textbf{27} (2017), no.~1, 140--177.
	
	\bibitem{goncalves2014BG}
	P.~Gon\c{c}alves and M.~Jara, \emph{{Nonlinear fluctuations of weakly
			asymmetric interacting particle systems}}, {Arch. Ration. Mech. Anal.}
	\textbf{212} (2014), no.~2, 597--644 (English).
	
	\bibitem{GJSi2017}
	P.~Gon\c{c}alves, M.~Jara, and M.~Simon, \emph{{Second order
			{B}oltzmann-{G}ibbs principle for polynomial functions and applications}}, J.
	Stat. Phys. \textbf{166} (2017), no.~1, 90--113.
	
	\bibitem{GPS2020}
	P.~Gon\c{c}alves, N.~Perkowski, and M.~Simon, \emph{Derivation of the
		stochastic {Burgers} equation with {Dirichlet} boundary conditions from the
		{WASEP}}, Ann. Henri Lebesgue \textbf{3} (2020), 87--167 (en).
	
	\bibitem{gubinelli2015paracontrolled}
	M.~Gubinelli, P.~Imkeller, and N.~Perkowski, \emph{Paracontrolled distributions
		and singular {PDE}s}, Forum Math. Pi \textbf{3} (2015).
	
	\bibitem{gubinelli2017kpz}
	M.~Gubinelli and N.~Perkowski, \emph{{KPZ} reloaded}, Comm. Math. Phys.
	\textbf{349} (2017), no.~1, 165--269.
	
	\bibitem{gubinelli2018uniqueness}
	\bysame, \emph{{Energy solutions of KPZ are unique}}, {J. Am. Math. Soc.}
	\textbf{31} (2018), no.~2, 427--471 (English).
	
	\bibitem{gueudre2012directed}
	T.~Gueudr{\'e} and P.~Le~Doussal, \emph{Directed polymer near a hard wall and
		{KPZ} equation in the half-space}, Europhysics Lett. \textbf{100} (2012),
	no.~2, 26006.
	
	\bibitem{hairer2013kpz}
	M.~Hairer, \emph{{Solving the KPZ equation}}, {Ann. Math. (2)} \textbf{178}
	(2013), no.~2, 559--664 (English).
	
	\bibitem{hairer2014regularity}
	\bysame, \emph{{A theory of regularity structures}}, {Invent. Math.}
	\textbf{198} (2014), no.~2, 269--504 (English).
	
	\bibitem{he2024boundary}
	J.~He, \emph{Boundary current fluctuations for the half-space {ASEP} and
		six-vertex model}, Proc. London Math. Soc. \textbf{128} (2024), no.~2,
	e12585.
	
	
	\bibitem{imamura2022solvable}
	T.~Imamura, M.~Mucciconi, and T.~Sasamoto, \emph{Solvable models in the {KPZ}
		class: approach through periodic and free boundary {S}chur measures}, arXiv:2204.08420.
	
	\bibitem{KL}{C. Kipnis} and {C. Landim}, \emph{Scaling limits of interacting particle systems.} Berlin: Springer (1999).
	
	\bibitem{krajenbrink2020replica}
	A.~Krajenbrink and P.~Le~Doussal, \emph{Replica {B}ethe ansatz solution to the
		{Kardar-Parisi-Zhang} equation on the half-line}, SciPost Phys \textbf{8}
	(2020), 035.
	
	\bibitem{mueller1991support}
	C.~Mueller, \emph{On the support of solutions to the heat equation with noise},
	Stochastics \textbf{37} (1991), no.~4, 225--245.
	
	\bibitem{parekh2017kpz}
	S.~Parekh, \emph{The {KPZ} limit of {ASEP} with boundary}, Comm. Math. Phys.
	\textbf{365} (2019), no.~2, 569--649.
	
	\bibitem{parekh2019positive}
	S.~Parekh, \emph{{Positive random walks and an identity for half-space
			SPDEs}}, Electronic Journal of Probability \textbf{27} (2022), 1 -- 47.
	
	\bibitem{prahofer2001current}
	M.~Pr{\"a}hofer and H.~Spohn, \emph{Current fluctuations for the totally
		asymmetric simple exclusion process}, In and Out of Equilibrium (Mambucaba
	2000) Progress in Probability \textbf{51} (2001), 185--204.
	
	\bibitem{pastor2000FEP}
	M.~Rossi, R.~Pastor-Satorras, and A.~Vespignani, \emph{Universality class of
		absorbing phase transitions with a conserved field}, Phys. Rev. Lett.
	\textbf{85} (2000), 1803--1806.
	
	\bibitem{schuetz1997duality}
	G.~M. Sch{\"u}tz, \emph{Duality relations for asymmetric exclusion processes},
	J. Stat. Phys. \textbf{86} (1997), no.~5, 1265--1287.
	
	\bibitem{tracy2013asymmetric}
	C.~A. Tracy and H.~Widom, \emph{The asymmetric simple exclusion process with an
		open boundary}, J. Math. Phys. \textbf{54} (2013), no.~10, 103301.
	
	\bibitem{tracy2013bose}
	\bysame, \emph{The {B}ose gas and asymmetric simple exclusion process on the
		half-line}, J. Stat. Phys. \textbf{150} (2013), no.~1, 1--12.
	
	\bibitem{walsh1986introduction}
	J.~B Walsh, \emph{An introduction to stochastic partial differential
		equations}, {\'E}cole d'{\'E}t{\'e} de Probabilit{\'e}s de Saint Flour
	XIV-1984, Springer, 1986, pp.~265--439.
	
	\bibitem{wu2018intermediate}
	X.~Wu, \emph{Intermediate disorder regime for half-space directed polymers}, J.
	Stat. Phys. \textbf{181} (2020), 2372--2403.
	
	\bibitem{yang2021stochastic}
	K.~Yang, \emph{Stochastic {B}urgers {E}quation via energy solutions from
		non-stationary particle systems}, arXiv:1810.02836.
	
	\bibitem{yang2020kardar}
	\bysame, \emph{{Kardar-Parisi-Zhang} equation from non-simple variations on
		open-{ASEP}}, Probab. Theor. Rel. Fields \textbf{183} (2022).
	
\end{thebibliography}


\providecommand{\bysame}{\leavevmode\hbox to3em{\hrulefill}\thinspace}
\providecommand{\MR}{\relax\ifhmode\unskip\space\fi MR }
\providecommand{\MRhref}[2]{%
	\href{http://www.ams.org/mathscinet-getitem?mr=#1}{#2}
}
\providecommand{\href}[2]{#2}

\subsection*{Acknowledgments}

This project is partially supported by the ANR grant MICMOV (ANR-19-CE40-0012)
of the French National Research Agency (ANR), and by the European Union with the program FEDER ``Fonds européen de développement régional'' with the Région Hauts-de-France. It has also received funding from the European
Research Council (ERC) under the European Union’s Horizon 2020 research and innovative program
(grant agreement n° 715734), and from
Labex CEMPI (ANR-11-LABX-0007-01). This article is also partially  based upon work supported by the National Science Foundation under Grant No. DMS-1928930 while G.B. participated in a
program hosted by the Mathematical Sciences Research Institute in Berkeley, California,
during the Fall 2021 semester.

%
%

\end{document}